\numberwithin{equation}{section} 
\theoremstyle{plain}
\newtheorem{theorem}{Theorem}[section]
\newtheorem{proposition}[theorem]{Proposition}
\newtheorem{lemma}[theorem]{Lemma}
\newtheorem{corollary}[theorem]{Corollary}
\theoremstyle{definition}
\newtheorem{definition}[theorem]{Definition}
\theoremstyle{remark}
\newtheorem{remark}[theorem]{Remark}
\newtheorem{questions}[theorem]{Questions}
\newtheorem{example}[theorem]{Example}
\newcommand{\Cs}{{$C^*$-algebra}}
\newcommand{\N}{\mathbb N}
\newcommand{\Z}{\mathbb Z}
\newcommand{\Q}{\mathbb Q}
\newcommand{\R}{\mathbb R}
\newcommand{\C}{\mathbb C}
\newcommand{\E}{\mathbb E}
\newcommand{\F}{\mathbb F}
\newcommand{\K}{\mathbb K}
\newcommand{\cA}{\mathcal A}
\newcommand{\cB}{\mathcal B}
\newcommand{\cC}{\mathcal C}
\newcommand{\cK}{\mathcal K} 
\newcommand{\cI}{\mathcal I} 
\newcommand{\red}{{\mathrm{red}}}
\newcommand{\full}{{\mathrm{full}}}
\newcommand{\ep}{{\varepsilon}}
\newcommand{\Can}{{\bf{K}}}
\newcommand{\se}{\subseteq}
\def\YEAR{\year}\newcount\VOL\VOL=\YEAR\advance\VOL by-1995
\def\firstpage{1597}\def\lastpage{1626}
\def\received{June 3, 2013}\def\revised{July 22, 2013}
\def\communicated{Joachim Cuntz}
\def\magnification{\afterassignment\m@g\count@}
\def\m@g{\mag=\count@\hsize6.5truein\vsize8.9truein\dimen\footins8truein}
\font\eightrm=cmr8
\font\caps=cmcsc10                    
\font\Caps=cmcsc10 scaled \magstep1   
\font\scaps=cmcsc8
\def\DocMath{{\def\th{\thinspace}\scaps Documenta Math.}}
\renewcommand{\@oddfoot}{\hfill\scaps Documenta Mathematica 
    \number\VOL\  (\number\YEAR) \number\firstpage--\lastpage\hfill}
\renewcommand{\@evenfoot}{\ifnum\thepage>\lastpage\hfill\scaps
    Documenta Mathematica \number\VOL\  (\number\YEAR)\hfill\else\@oddfoot\fi}%
\renewcommand{\@evenhead}{%
    \ifnum\thepage>\lastpage\rlap{\thepage}\hfill%
    \else\rlap{\thepage}\slshape\leftmark\hfill{\caps\SAuthor}\hfill\fi}%
\renewcommand{\@oddhead}{%
    \ifnum\thepage=\firstpage{\DocMath\hfill\llap{\thepage}}%
    \else{\slshape\rightmark}\hfill{\caps\STitle}\hfill\llap{\thepage}\fi}%
\def\TSkip{\bigskip}
\newbox\TheTitle{\obeylines\gdef\GetTitle #1
\ShortTitle  #2
\SubTitle    #3
\Author      #4
\ShortAuthor #5
\EndTitle
{\setbox\TheTitle=\vbox{\baselineskip=20pt\let\par=\cr\obeylines%
\halign{\centerline{\Caps##}\cr\noalign{\medskip}\cr#1\cr}}%
	\copy\TheTitle\TSkip\TSkip%
\def\next{#2}\ifx\next\empty\gdef\STitle{#1}\else\gdef\STitle{#2}\fi%
\def\next{#3}\ifx\next\empty%
    \else\setbox\TheTitle=\vbox{\baselineskip=20pt\let\par=\cr\obeylines%
    \halign{\centerline{\caps##} #3\cr}}\copy\TheTitle\TSkip\TSkip\fi%
\centerline{\caps #4}\TSkip\TSkip%
\def\next{#5}\ifx\next\empty\gdef\SAuthor{#4}\else\gdef\SAuthor{#5}\fi%
\ifx\received\empty\relax
    \else\centerline{\eightrm Received: \received}\fi%
\ifx\revised\empty\TSkip%
    \else\centerline{\eightrm Revised: \revised}\TSkip\fi%
\ifx\communicated\empty\relax
    \else\centerline{\eightrm Communicated by \communicated}\fi\TSkip\TSkip%
\catcode'015=5}}\def\Title{\obeylines\GetTitle}
\def\Abstract{\begingroup\narrower
    \parskip=\medskipamount\parindent=0pt{\caps Abstract. }}
\def\EndAbstract{\par\endgroup\TSkip}
\long\def\MSC#1\EndMSC{\def\arg{#1}\ifx\arg\empty\relax\else
     {\par\narrower\noindent%
     2010 Mathematics Subject Classification: #1\par}\fi}
\long\def\KEY#1\EndKEY{\def\arg{#1}\ifx\arg\empty\relax\else
	{\par\narrower\noindent Keywords and Phrases: #1 \par}\fi\TSkip}
\newbox\TheAdd\def\Addresses{\vfill\copy\TheAdd\vfill
    \ifodd\number\lastpage\vfill\eject\phantom{.}\vfill\eject\fi}
{\obeylines\gdef\GetAddress #1
\Address #2 
\Address #3
\Address #4
\EndAddress
{\def\xs{4.3truecm}\parindent=0pt
\setbox0=\vtop{{\obeylines\hsize=\xs#1\par}}\def\next{#2}
\ifx\next\empty 
     \setbox\TheAdd=\hbox to\hsize{\hfill\copy0\hfill}
\else\setbox1=\vtop{{\obeylines\hsize=\xs#2\par}}\def\next{#3}
\ifx\next\empty 
     \setbox\TheAdd=\hbox to\hsize{\hfill\copy0\hfill\copy1\hfill}
\else\setbox2=\vtop{{\obeylines\hsize=\xs#3\par}}\def\next{#4}
\ifx\next\empty\ 
     \setbox\TheAdd=\vtop{\hbox to\hsize{\hfill\copy0\hfill\copy1\hfill}
                \vskip20pt\hbox to\hsize{\hfill\copy2\hfill}}
\else\setbox3=\vtop{{\obeylines\hsize=\xs#4\par}}
     \setbox\TheAdd=\vtop{\hbox to\hsize{\hfill\copy0\hfill\copy1\hfill}
	        \vskip20pt\hbox to\hsize{\hfill\copy2\hfill\copy3\hfill}}
\fi\fi\fi\catcode'015=5}}\gdef\Address{\obeylines\GetAddress}
\begin{document}
\Title
Non-Supramenable Groups
 Acting on Locally Compact Spaces 
\ShortTitle 
Non-Supramenable Groups
\SubTitle   
\Author 
Julian Kellerhals, Nicolas Monod, and Mikael R\o rdam
\ShortAuthor 
\EndTitle
\Abstract 
Supramenability of groups is characterised in terms of invariant measures on 
locally compact spaces. This opens the door to constructing interesting crossed 
product $C^*$-algebras for non-supramenable groups. In particular, stable Kirchberg 
algebras in the UCT class are constructed using crossed products for both 
amenable and non-amenable groups. 
\EndAbstract
\MSC 
43A07, 46L55, 46L35
\EndMSC
\KEY 
Supramenable groups, actions on locally compact spaces, purely
infinite $C^*$-algebras and actions.
\EndKEY
\Address
Julian Kellerhals
EPFL
SB-EGG
Station 8
1015 Lausanne
Switzerland
julian.kellerhals@epfl.ch
\Address
Nicolas Monod
EPFL
SB-EGG
Station 8
1015 Lausanne
Switzerland
nicolas.monod@epfl.ch
\Address
Mikael R\o rdam
Department of Mathematics
University of Copenhagen
Univer\-si\-tets\-parken~5
DK-2100
Copenhagen \O
Denmark
rordam@math.ku.dk
\Address
\EndAddress

\section{Introduction}
A group $\Gamma$ is called \emph{amenable} if it carries an invariant finitely additive measure $\mu$ with $0<\mu(\Gamma)<\infty$. This concept introduced by von Neumann~\cite{vonNeumann29} was originally motivated by the Hausdorff--Banach--Tarksi paradox~\cite{Hausdorff14_article,Banach-Tarski} and has become central in many aspects of group theory and beyond.

However, from the point of view of paradoxical decompositions, it is more natural to consider the following question of von Neumann~\cite[\S4]{vonNeumann29}. Let $\Gamma$ be a group acting on a set $X$. Given a subset $A\se X$, when is there an invariant finitely additive measure $\mu$ on $X$ with $0<\mu(A)<\infty$?

Following Rosenblatt~\cite{Rosenblatt:TAMS}, the group $\Gamma$ is called \emph{supramenable} if there is such a measure for every non-empty subset; it turns out that it is sufficient to consider the case $X=\Gamma$. Supramenability is much stronger than amenability: whilst it holds for commutative and subexponential groups, it fails already for metabelian groups (Example~\ref{exo:semi}) and thus is not preserved under extensions. It passes to subgroups and quotients but is not known to be preserved under direct products.

One of the many equivalent characterisations of amenability is the existence of a non-zero invariant Radon measure on any compact Hausdorff $\Gamma$-space (see e.g.~\cite[5.4]{Pier}). It turns out that there is an analogous characterisation of supramenability using \emph{locally compact} spaces. We shall establish this and a couple more characterisations:

\begin{theorem}\label{thm:equivalences}
The following conditions are equivalent for any group $\Gamma$.

\begin{enumerate}
\item $\Gamma$ is supramenable.
\item Any co-compact $\Gamma$-action on a locally compact Hausdorff space admits a non-zero invariant Radon measure.\label{it:Radon}
\item The Roe algebra $\ell^\infty(\Gamma) \rtimes_\red \Gamma$ contains no properly infinite projection.\label{it:Roe}
\item There is no injective Lipschitz map from the free group $\F_2$ to $\Gamma$.
\end{enumerate}
\end{theorem}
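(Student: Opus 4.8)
The plan is to route all four conditions through Tarski's theorem: a non-empty $A\se\Gamma$ carries a $\Gamma$-invariant finitely additive measure $\mu$ with $0<\mu(A)<\infty$ if and only if $A$ is not $\Gamma$-paradoxical, i.e.\ admits no decomposition $A=\bigsqcup_{i=1}^{n}A_i\sqcup\bigsqcup_{j=1}^{m}B_j$ with $\bigsqcup_i g_iA_i=A=\bigsqcup_j h_jB_j$ for suitable $g_i,h_j\in\Gamma$. Since, as recalled above, supramenability need only be tested on $X=\Gamma$, condition (i) fails exactly when some non-empty subset of $\Gamma$ is $\Gamma$-paradoxical, so I would show that each of (ii), (iii), (iv) is likewise equivalent to the absence of such a subset. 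It is convenient to identify $\ell^\infty(\Gamma)=C(\beta\Gamma)$, so that subsets $B\se\Gamma$ correspond $\Gamma$-equivariantly to the compact open sets $\overline B\se\beta\Gamma$.

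For $(i)\Rightarrow(ii)$ let $\Gamma$ act co-compactly on a locally compact Hausdorff space $Y=\Gamma K$ with $K$ compact. Fix $y_0\in K$, set $A:=\{g\in\Gamma: gy_0\in K\}\ni e$, and use supramenability to pick an invariant finitely additive $\mu$ with $0<\mu(A)<\infty$. For $f\in C_c(Y)_+$ put $\nu(f):=\int_\Gamma f(gy_0)\,d\mu(g)$; co-compactness makes $g\mapsto f(gy_0)$ bounded and supported in a finite union of translates of $A$, hence integrable against $\mu$, so $\nu$ extends to a $\Gamma$-invariant positive linear functional on $C_c(Y)$, which is non-zero because $\nu(\phi)\ge\mu(A)>0$ for a Urysohn function $\phi\in C_c(Y)$ with $\mathbf 1_K\le\phi$. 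The Riesz representation theorem then delivers the required invariant Radon measure. For $(ii)\Rightarrow(i)$, given a non-empty $A\se\Gamma$ I would form $X_A:=\Gamma\cdot\overline A=\bigcup_{g\in\Gamma}g\overline A\se\beta\Gamma$; this is open, hence locally compact Hausdorff, and $\Gamma$ acts on it co-compactly with fundamental compact open set $\overline A$. A non-zero invariant Radon measure $\nu$ on $X_A$ has $0<\nu(\overline A)<\infty$ (positivity because each compact subset of $X_A$ is covered by finitely many $g\overline A$; finiteness because $\overline A$ is compact), and then $B\mapsto\nu(\overline B\cap X_A)$ is a $[0,\infty]$-valued $\Gamma$-invariant finitely additive measure on $\Gamma$ taking a value in $(0,\infty)$ at $A$.

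For $(i)\Leftrightarrow(iv)$, read a Lipschitz map $f\colon\F_2\to\Gamma$ as a bounded-displacement map, i.e.\ one for which some finite $F\se\Gamma$ satisfies $f(w)^{-1}f(wx)\in F$ for all $w\in\F_2$ and all generators $x$. Given an injective such $f$, push forward the standard paradoxical decomposition of $\F_2$ into the four pieces indexed by $\{a,a^{-1},b,b^{-1}\}$ (using the boundedly-moving translations): subdividing each image set into finitely many pieces according to which element of $F$ realises the relevant translation converts that translation into an honest translation by an element of $F\cup\{e\}$, so $f(\F_2)$ is $\Gamma$-paradoxical and (i) fails. Conversely, from a $\Gamma$-paradoxical $A$ a Hall-marriage-type argument extracts two injections $\alpha,\beta\colon A\to A$ with disjoint images and bounded displacement; running them along the rooted binary tree with a basepoint propagated by $x_{\alpha w}=\alpha(x_w)$, $x_{\beta w}=\beta(x_w)$ yields a bounded-displacement map of the tree, hence, after identifying the tree with a Cayley graph of $\F_2$, an injective Lipschitz map $\F_2\to A$. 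The one delicate point is keeping the map injective across distinct levels of the tree, which one arranges by first upgrading $\alpha,\beta$ to a strictly expanding Ponzi scheme, or by choosing the $x_w$ recursively using that the nested subsets $w(A)$ remain infinite.

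For $(i)\Leftrightarrow(iii)$: if $A$ is $\Gamma$-paradoxical as above, then $v:=\sum_i u_{g_i}\mathbf 1_{A_i}$ and $w:=\sum_j u_{h_j}\mathbf 1_{B_j}$ are partial isometries in the Roe algebra with $v^*v=\mathbf 1_{A'}$ and $w^*w=\mathbf 1_{A''}$ orthogonal and summing to $\mathbf 1_A$ (where $A'=\bigsqcup_iA_i$, $A''=\bigsqcup_jB_j$), while $vv^*=ww^*=\mathbf 1_A$; hence $\mathbf 1_A\oplus\mathbf 1_A\sim\mathbf 1_{A'}\oplus\mathbf 1_{A''}=\mathbf 1_A$, so $\mathbf 1_A$ is a properly infinite projection. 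Conversely, assume $\Gamma$ supramenable. First, no $\mathbf 1_A$ is properly infinite: a normalised invariant finitely additive $\mu$ with $0<\mu(A)<\infty$ yields, through the canonical conditional expectation $E\colon\ell^\infty(\Gamma)\rtimes_\red\Gamma\to\ell^\infty(\Gamma)$, a densely defined lower semicontinuous trace that is finite and non-zero on $\mathbf 1_A$, so $\mathbf 1_A\oplus\mathbf 1_A\precsim\mathbf 1_A$ is impossible. I expect the main obstacle to be ruling out \emph{arbitrary} properly infinite projections: one must show that a properly infinite projection in $C(\beta\Gamma)\rtimes_\red\Gamma$ produces a $\Gamma$-paradoxical compact open subset of $\beta\Gamma$, equivalently a $\Gamma$-paradoxical subset of $\Gamma$ — roughly, that band-dominated partial isometries can be replaced by genuine partial translations. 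This is exactly where zero-dimensionality of $\beta\Gamma$ enters, via the comparison theory of type semigroups for crossed products by actions on totally disconnected spaces in the spirit of R\o rdam--Sierakowski; combined with Tarski's theorem it closes all the equivalences.
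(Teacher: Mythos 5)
Your handling of (i)$\Leftrightarrow$(ii) and (i)$\Leftrightarrow$(iv) is essentially the paper's own route (Lemma~\ref{lm:lch} and Proposition~\ref{prop:XA} for the Radon measures on $X_A\subseteq\beta\Gamma$; piecewise translations for the free group), with two repairable imprecisions. For (i)$\Rightarrow$(ii) you must first replace $K$ by a compact set whose \emph{interior} has $\Gamma$-translates covering $X$; otherwise a compact support need not be covered by finitely many translates of $K$, and the claim that $g\mapsto f(gy_0)$ has $A$-bounded support does not follow. In (iv), the rooted binary tree built from two injections $\alpha,\beta$ is the free \emph{monoid} on two generators, not a Cayley graph of $\F_2$; you must either compose with an injective Lipschitz embedding of $\F_2$ into the free semigroup, or do what the paper does: extract \emph{four} piecewise translations $\sigma^{\pm},\tau^{\pm}$ with disjoint images (and a basepoint $s_0$ outside all four images) indexed by $a^{\pm1},b^{\pm1}$, and define the map by recursion on reduced words, which makes injectivity immediate and removes your ``delicate point.''

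The genuine gap is in (i)$\Rightarrow$(iii). You correctly isolate the hard step---a properly infinite projection in $\ell^\infty(\Gamma)\rtimes_\red\Gamma$ need not lie in $\ell^\infty(\Gamma)$---but the proposed resolution via ``comparison theory of type semigroups'' is not an argument, and it points at machinery that is known to fail here: Remark~\ref{rem:typesg} records (citing Ara--Exel) that the relevant type semigroup need not be almost unperforated, so comparison in $S(X,\Gamma,\K)$ is precisely what one \emph{cannot} use to convert a properly infinite projection into a paradoxical set. The paper's actual proof of Proposition~\ref{prop:Roe-Supramenable} is quite different. It first disposes of non-exact $\Gamma$ (non-exact implies non-amenable, hence non-supramenable). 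For exact $\Gamma$ it invokes Sierakowski's theorem that, for an exact group acting freely, every closed two-sided ideal of the reduced crossed product is generated by its intersection with the coefficient algebra; the Pedersen-ideal compactness argument of Remark~\ref{rm:compact-ideals} then shows that the ideal generated by the properly infinite projection $p$ is already generated by a single projection $q=1_B\in\ell^\infty(\Gamma)$; and Lemma~\ref{lm:equiv-propinf} transfers proper infiniteness from $p$ to $q$, whence $B$ is paradoxical by \cite[Proposition 5.5]{RorSie:ETDS}. Without the exact/non-exact dichotomy and the ideal-structure theorem, your sketch does not close this implication.
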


\noindent
(This also gives a partial answer to the questions raised after Theorem~6.3.1 in~\cite{ShalomQI}. Regarding condition~\ref{it:Roe}, we recall that $\Gamma$ is amenable if and only if the Roe algebra is itself not properly infinite~\cite{RorSie:ETDS}. Theorem~\ref{thm:equivalences} is proved below in Propositions~\ref{prop:lch}, \ref{prop:supra-F_2} and~ \ref{prop:Roe-Supramenable}.)


We shall leverage the \emph{failure} of condition~\ref{it:Radon} in order to provide interesting \Cs s through the corresponding reduced crossed product construction. To this end, we need to establish that the locally compact $\Gamma$-space can be assumed to have several additional properties, as follows. 
Recall that a \emph{Kirchberg algebra} is a \Cs{} which is simple, purely infinite, nuclear and separable. Kirchberg algebras in the UCT class are completely classified by their $K$-theory, see~\cite{Phi:class} and~\cite{Kir:fields}. 

\begin{theorem}\label{thm:d:intro}
Let $\Gamma$ be a countable group. Then $\Gamma$ admits a free, minimal, purely infinite action on  the locally compact non-compact Cantor set $\Can^*$ if and only if $\Gamma$ is non-supramenable. The crossed product \Cs\ $C_0(\Can^*) \rtimes_\red \Gamma$ associated with any such action will be a stable, simple, purely infinite \Cs.

If the non-supramenable group $\Gamma$ is amenable, then the associated crossed product \Cs\ $C_0(\Can^*) \rtimes_\red \Gamma$ will be a stable Kirchberg algebra in the UCT class.
\end{theorem}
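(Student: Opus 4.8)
The plan is to prove the theorem in three stages corresponding to its three assertions. First I would establish the ``only if'' direction of the equivalence: if $\Gamma$ admits a free, minimal, purely infinite action on $\Can^*$, then $\Gamma$ is non-supramenable. This should follow from Theorem~\ref{thm:equivalences}: a purely infinite minimal action on a non-compact locally compact space cannot carry a non-zero invariant Radon measure (pure infiniteness forces every non-zero positive element to be ``paradoxical'', so an invariant measure would have to vanish on a generating open set, hence everywhere by minimality), and $\Can^*$ is co-compact under the action since it is a locally compact Cantor set on which $\Gamma$ acts minimally; then condition~\ref{it:Radon} fails, so $\Gamma$ is not supramenable. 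Some care is needed to check the co-compactness hypothesis, but for a minimal action on $\Can^*$ one can arrange a compact open set whose translates cover the space.

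Second, and this is the heart of the matter, I would prove the ``if'' direction: given a countable non-supramenable $\Gamma$, construct the desired action. The natural starting point is Theorem~\ref{thm:equivalences}\ref{it:Radon}: non-supramenability yields a co-compact $\Gamma$-space $Y$ with no non-zero invariant Radon measure. The task is to upgrade $Y$ to a space $\Can^*$ (locally compact, non-compact, totally disconnected, second countable, with no isolated points and no compact-open ``minimal'' pieces — i.e. the unique such space up to homeomorphism) carrying a free, minimal, purely infinite action. The strategy I expect the paper to use: first replace $Y$ by its ``profinite-type'' model, passing to a suitable $\Gamma$-invariant closed subset or an inverse-limit/groupoid construction to make the space a locally compact Cantor set; then force freeness by taking a product with a free $\Gamma$-space (e.g. a free action on a Cantor set, which always exists for countable $\Gamma$) while preserving the absence of invariant measures and co-compactness; then force minimality by passing to a minimal closed invariant subset, checking that this subset is still a copy of $\Can^*$ and still has no invariant measure; and finally deduce pure infiniteness of the action from the absence of invariant measures together with minimality — this last implication is presumably a separate proposition in the body of the paper (a dichotomy: a minimal action on such a space is either ``stably finite'' with an invariant measure, or purely infinite). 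The main obstacle is precisely keeping \emph{all} the properties simultaneously alive through these reductions — in particular, ensuring the minimal invariant subspace one lands in is genuinely homeomorphic to $\Can^*$ (no isolated points, non-compact, $\sigma$-compact) rather than degenerating, and that non-supramenability (absence of invariant measure) is inherited by that subspace.

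Third, I would derive the structural conclusions about the crossed product. Granting a free, minimal, purely infinite action of $\Gamma$ on $\Can^*$: simplicity of $C_0(\Can^*)\rtimes_\red\Gamma$ follows from minimality and (topological) freeness of the action by the standard Archbold--Spielberg type criterion; pure infiniteness of the crossed product follows from pure infiniteness of the action together with minimality, again presumably via a result quoted earlier; and stability follows because the action is on a non-compact space with no non-zero invariant Radon measure — by (a Green--type or Rokhlin-type) argument, or by the characterisation of stability via the absence of bounded traces and a suitable $\sigma$-unital/full-projectionless structure, the non-compactness of $\Can^*$ together with pure infiniteness gives $\sigma$-unital stability. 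For the final sentence: if in addition $\Gamma$ is amenable, then the action is amenable, so the reduced and full crossed products coincide and the crossed product is nuclear; it is separable because $\Gamma$ is countable and $C_0(\Can^*)$ is separable; and it satisfies the UCT because $C_0(\Can^*)$ is type~I (even abelian) with the UCT and the UCT class is closed under crossed products by amenable (indeed, a-T-menable/$K$-amenable) groups via the Tu/Higson--Kasparov machinery. Combining: a simple, separable, nuclear, purely infinite \Cs{} in the UCT class is a Kirchberg algebra in the UCT class, and it is moreover stable, as required.
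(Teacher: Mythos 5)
Your first and third stages are essentially right and match the paper: the ``only if'' direction does follow from Proposition~\ref{prop:lch} together with Lemma~\ref{lm:3conditions} (minimal actions are co-compact, and a compact-open set that is paradoxical supports no invariant Radon measure), and the crossed-product conclusions follow from Propositions~\ref{prop:pi} and~\ref{prop:pi-actions}, with stability coming from Zhang's dichotomy and the UCT from Tu's theorem on amenable groupoids.

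The gap is in the heart of the argument, the ``if'' direction. Your plan is to first produce a free minimal action on $\Can^*$ with no non-zero invariant Radon measure and then ``deduce pure infiniteness of the action from the absence of invariant measures together with minimality,'' which you describe as presumably a dichotomy proved elsewhere in the paper. No such proposition exists: this is exactly the implication ``(i) $\Rightarrow$ (iii)'' of Lemma~\ref{lm:3conditions}, which the paper explicitly raises as an open problem in Remark~\ref{rem:typesg}; it is known to hold only under an almost-unperforation (or weaker comparison) hypothesis on the type semigroup $S(X,\Gamma,\K)$, and Ara--Exel have shown that almost unperforation can fail for free-group actions on the locally compact Cantor set. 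So the final step of your construction does not go through as stated. The paper avoids this by building pure infiniteness in from the start: one chooses a non-empty \emph{paradoxical} set $A\subseteq\Gamma$ (with $A\ll\Gamma$, via Benjamini--Schramm, when $\Gamma$ is non-amenable, to force non-compactness later), forms the invariant open set $X_A\subseteq\beta\Gamma$, and cuts $C_0(X_A)$ down to a separable $\Gamma$-invariant subalgebra $\cA$ (Proposition~\ref{prop:c}) retaining witnesses of freeness, of amenability of the action when $\Gamma$ is exact, and --- crucially --- of the $(\ell^\infty(\Gamma),\Gamma)$-paradoxicality of every $\Gamma$-full projection (Lemma~\ref{lm:paradox-proj}); passing to a maximal proper invariant ideal then yields a minimal space on which every compact-open subset is paradoxical by construction. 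Relatedly, your device of forcing freeness by multiplying with a free $\Gamma$-space is unnecessary and would create new problems (controlling paradoxicality and the homeomorphism type of the minimal subsets of the product): the action on $\beta\Gamma$ is already free, and freeness survives the separable cut-down by Lemma~\ref{lm:free}. Note finally that non-compactness and the absence of isolated points of the minimal space are themselves deduced \emph{from} the built-in pure infiniteness (together with $A\ll\Gamma$, or with amenability of $\Gamma$), rather than being arranged beforehand.
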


In the above, an action is called \emph{purely infinite} if every compact-open subset is paradoxical in a sense made precise in Definition~\ref{def:pi-action} below. It is a strengthening of the failure of condition~\ref{it:Radon} in Theorem~\ref{thm:equivalences}. As for the non-compact Cantor set $\Can^*$, one can realise it e.g.\ as the usual (compact) Cantor set $\Can$ with a point removed, or as $\N\times \Can$.


In the second statement of Theorem~\ref{thm:d:intro}, the amenability of $\Gamma$ is only used to deduce that the action on $\Can^*$ is amenable. But of course there are many amenable actions of non-amenable groups; therefore, we strive to produce more examples of \Cs s as above using crossed products for groups $\Gamma$ that are not necessarily amenable. Here is a summary of some progress in that direction.

\begin{theorem}\label{thm:non-exact:intro}
Let $\Gamma$ be a countable group.
\begin{enumerate}
\item If $\Gamma$ contains an infinite exact subgroup, then $\Gamma$ admits a free minimal amenable action on $\Can$ or on $\Can^*$; necessarily on $\Can^*$ if $\Gamma$ is non-exact. If $\Gamma$ contains an infinite exact subgroup of infinite index, then $\Gamma$ admits a free minimal amenable action on $\Can^*$.
\item If $\Gamma$ contains an element of infinite order, or if $\Gamma$ contains an infinite amenable subgroup of infinite index, then $\Gamma$ admits a free minimal amenable action on $\Can^*$ such that $\Can^*$ admits an invariant non-zero Radon measure. 
The associated crossed product $C_0(\Can^*) \rtimes_\red \Gamma$ is a stably finite simple separable nuclear \Cs{} the UCT class.
\item If $\Gamma$ contains an exact non-supramenable subgroup,  then $\Gamma$ admits a free minimal amenable purely infinite action on $\Can^*$. 
The associated crossed product $C_0(\Can^*) \rtimes_\red \Gamma$ is a stable Kirchberg algebra in the UCT class.
\end{enumerate}
\end{theorem}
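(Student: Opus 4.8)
The plan is to deduce all three parts from a single mechanism --- \emph{induction of actions} --- together with the construction behind Theorem~\ref{thm:d:intro}. Given $H\le\Gamma$ and an $H$-space $Y$, one forms the induced $\Gamma$-space $\Gamma\times_H Y$, which is a disjoint union of $[\Gamma:H]$ copies of $Y$; hence it is $\cong\Can$ when $Y\cong\Can$ and $[\Gamma:H]<\infty$, it is $\cong\Can^*$ when $Y\cong\Can$ and $[\Gamma:H]=\infty$, and it is $\cong\Can^*$ whenever $Y\cong\Can^*$ (a countable disjoint union of copies of $\Can^*$ is again $\Can^*$). I will use that induction preserves freeness, minimality and amenability of the action (Anantharaman--Delaroche), and --- the one point needing its own argument --- the purely infinite property: a compact-open subset of $\Gamma\times_H Y$ is a finite disjoint union of compact-open sets each lying in a single fibre, each of these is $H$-paradoxical since $H\curvearrowright Y$ is purely infinite, hence $\Gamma$-paradoxical, and a finite disjoint union of paradoxical compact-open sets is paradoxical. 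Finally I will invoke the dictionary: for $\Gamma\curvearrowright X$ free and minimal, $C_0(X)\rtimes_\red\Gamma$ is simple; if moreover the action is amenable then the reduced and full crossed products coincide, the algebra is nuclear, and --- its transformation groupoid being amenable --- it lies in the UCT class by Tu's theorem; if the action is purely infinite, the crossed product is purely infinite and stable by the corresponding results for purely infinite actions; and a non-zero invariant Radon measure produces a non-zero densely defined lower semicontinuous trace, forcing stable finiteness.

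For (i): if $H\le\Gamma$ is infinite and exact, then $H$ acts freely on $\Can$ (Hjorth--Molberg) and, being countable and exact, also acts amenably on a compact metrizable space, hence --- passing to a zero-dimensional equivariant extension --- amenably on a Cantor space; the diagonal action on the product $\Can\times\Can\cong\Can$ is free and amenable. Restricting to a minimal closed invariant set $M$ --- infinite because the action is free and $H$ is infinite, hence perfect and so $\cong\Can$ --- gives a free, minimal, amenable action of $H$ on $\Can$. Inducing it to $\Gamma$ produces such an action on $\Can$ when $[\Gamma:H]<\infty$ and on $\Can^*$ when $[\Gamma:H]=\infty$; the latter case also proves the assertion for infinite exact subgroups of infinite index. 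And if $\Gamma$ admitted a free minimal amenable action on the compact space $\Can$, then $\Gamma$ would be exact, so for non-exact $\Gamma$ the action can only be on $\Can^*$.

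For (ii): if $\Gamma$ has an infinite amenable subgroup $H$ of infinite index, I would run the argument of (i) with ``amenable'' replacing ``exact'': $H$ acts freely on $\Can$, one restricts to a minimal Cantor subsystem $M$, the action is automatically amenable, and since $H$ is amenable and $M$ compact it carries an $H$-invariant probability measure; inducing to $\Gamma$ gives a free, minimal, amenable action on $\Can^*$ together with an induced invariant non-zero Radon measure, and the crossed product is then separable, nuclear, simple, in the UCT class, and stably finite. If instead $\Gamma$ merely contains an element $g$ of infinite order, then either $\langle g\rangle\cong\Z$ has infinite index, reducing to the previous case, or $\Gamma$ is virtually cyclic, in which case it suffices to construct a single minimal homeomorphism of $\Can^*$ carrying an invariant non-zero Radon measure --- a locally compact Cantor minimal $\Z$-system --- and to induce it along the finite-index inclusion $\Z\le\Gamma$, amenability being automatic.

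For (iii): let $H\le\Gamma$ be exact and non-supramenable. The decisive step is to strengthen Theorem~\ref{thm:d:intro} for $H$ so as to obtain a free, minimal, \emph{amenable}, purely infinite action of $H$ on $\Can^*$: one carries out the construction underlying Theorem~\ref{thm:d:intro} over a compact amenable $H$-space (which exists exactly because $H$ is exact), so that the resulting locally compact space maps $H$-equivariantly onto that amenable space --- whence the action is amenable --- while freeness, minimality and paradoxicality are arranged by the same devices as in Theorem~\ref{thm:d:intro}. Inducing this action to $\Gamma$ (for any index) yields, by the facts above, a free, minimal, amenable, purely infinite action of $\Gamma$ on $\Can^*$, so that $C_0(\Can^*)\rtimes_\red\Gamma$ is a simple, separable, nuclear, purely infinite, stable $C^*$-algebra in the UCT class, i.e.\ a stable Kirchberg algebra in the UCT class. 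The main obstacle, and essentially the only ingredient beyond Theorem~\ref{thm:d:intro} and the bookkeeping for induction, is precisely this reconciliation of amenability of the action (coming from exactness of $H$) with the paradoxicality required for pure infiniteness (coming from non-supramenability of $H$); a secondary technical point is the explicit construction of the locally compact Cantor minimal $\Z$-system needed for the virtually cyclic case of (ii).
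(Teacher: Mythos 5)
Your proposal is correct and follows essentially the same route as the paper: each part is proved by inducing a suitable action of the given subgroup up to $\Gamma$, using exactly the preservation properties (freeness, minimality, amenability, invariant Radon measures, and your fibrewise argument for pure infiniteness) that the paper records in Lemma~\ref{lem:induced}. The only divergences are minor: in (i) you manufacture the free minimal amenable action of the exact subgroup on $\Can$ by a product-plus-minimal-subsystem argument where the paper simply cites Hjorth--Molberg and R{\o}rdam--Sierakowski; your case split in (ii) is unnecessary since inducing a locally compact Cantor minimal $\Z$-system (Danilenko) yields $\Can^*$ for any index; and the ``decisive step'' you flag in (iii) --- an amenable, purely infinite version of Theorem~\ref{thm:d:intro} for exact non-supramenable groups --- is already established in the paper's Section~\ref{sec:Kirchberg} (Proposition~\ref{prop:c} and the proof of Theorem~\ref{thm:d:intro}), so no further strengthening is required.
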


\subsection*{Acknowledgements}
Work supported in part by the Swiss National Science Foundation, the European Research Council, Danish National Research Foundation (DNRF) through the Centre for Symmetry and Deformation at University of Copenhagen, and The Danish Council for Independent Research, Natural Sciences.

\section{Actions on locally compact spaces} \label{sec:actions}

\noindent It is well-known that a group is amenable if and only if whenever it acts on a compact Hausdorff space, then the space admits an invariant probability measure. We shall here characterise supramenable groups in a similar way by their actions on locally compact Hausdorff spaces.

Let us first recall the basic notions of comparison of subsets of a group $\Gamma$. If $A, B \subseteq \Gamma$, then write $A \sim_\Gamma B$ if there are finite partitions $\{A_j\}_{j=1}^n$ and $\{B_j\}_{j=1}^n$ of $A$ and $B$, respectively, and elements $t_1, t_2, \dots, t_n \in \Gamma$ such that $A_j = t_jB_j$. Write $A \precsim_\Gamma B$ if $A \sim_\Gamma B_0$ for some $B_0\subseteq B$. A set $A \subseteq \Gamma$ is said to be \emph{paradoxical} if there are disjoint subsets $A_0,A_1$ of $A$ such that $A \sim_\Gamma A_0 \sim_\Gamma A_1$. Tarski's theorem says that $A\subseteq \Gamma$ is non-paradoxical if and only if there is an invariant finitely additive measure $\mu$ defined  on the entire power set, $P(\Gamma)$, of $\Gamma$, such that $\mu(A) = 1$. (Notice that the empty set is paradoxical.)

The following proposition is well-known. For the weaker case of functions with ``$A$-bounded'' support, see for example~\cite[Chap.~1]{Greenleaf}.

\begin{proposition} \label{prop:a}
Let $\Gamma$ be a group and let $\mu$ be a finitely additive measure on $\Gamma$. Let $V_\mu$ be the subspace of $\ell^\infty(\Gamma)$ consisting of all $f \in \ell^\infty(\Gamma)$ such that $\mu(\mathrm{supp}(f)) < \infty$. It follows that there is a unique positive linear functional $I_\mu \colon V_\mu \to \C$ such that $I_\mu(1_E) = \mu(E)$ for all $E \subseteq \Gamma$ with $\mu(E) < \infty$. If $\mu$ is $\Gamma$-invariant, then so is $I_\mu$.
\end{proposition}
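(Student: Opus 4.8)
The plan is to construct $I_\mu$ on simple functions first, check it is well-defined and bounded in the relevant sense, and then extend by a density/linearity argument to all of $V_\mu$.

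\medskip

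\emph{Step 1: Definition on simple functions.} Let $S_\mu \subseteq V_\mu$ denote the linear span of indicator functions $1_E$ with $\mu(E)<\infty$; since $\mu$ is finitely additive, $S_\mu$ is indeed a linear subspace and every $f \in S_\mu$ can be written as $f = \sum_{j=1}^n c_j 1_{E_j}$ with the $E_j$ pairwise disjoint and $\mu(E_j)<\infty$. Define $I_\mu(f) = \sum_j c_j \mu(E_j)$. The first thing to verify is that this does not depend on the chosen representation: given two such disjoint representations, pass to the common refinement obtained by intersecting the pieces (using finite additivity of $\mu$ on the finite-measure pieces, and noting that a subset of a finite-measure set has finite measure — this needs $\mu$ to be monotone, which follows from finite additivity and non-negativity), and compare. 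This also makes $I_\mu$ linear and positive on $S_\mu$, and by construction $I_\mu(1_E) = \mu(E)$.

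\medskip

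\emph{Step 2: A norm estimate.} The key quantitative point is that for $f \in S_\mu$ with $\mu(\supp(f)) < \infty$ one has
\[
|I_\mu(f)| \le \|f\|_\infty \, \mu(\supp(f)).
\]
Indeed, writing $f$ in disjoint form over the support, $|I_\mu(f)| \le \sum_j |c_j|\mu(E_j) \le \|f\|_\infty \sum_j \mu(E_j) = \|f\|_\infty\mu(\supp(f))$. This is the estimate that will let the functional survive the limiting process, because the support constraint is preserved: if $f_n \to f$ uniformly and all $f_n$ are supported in a fixed set $E$ with $\mu(E)<\infty$, then $(f_n)$ is Cauchy for the seminorm $\|\cdot\|_\infty\mu(E)$.

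\medskip

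\emph{Step 3: Extension to $V_\mu$.} Given $f \in V_\mu$, put $E = \supp(f)$, so $\mu(E)<\infty$. Approximate $f$ uniformly by simple functions $f_n \in S_\mu$ supported in $E$ (this is the standard approximation of a bounded function by simple functions, truncating the range into finitely many small intervals; the preimages are subsets of $E$, hence of finite measure). By Step 2, $(I_\mu(f_n))$ is a Cauchy sequence in $\C$; define $I_\mu(f)$ as its limit. Independence of the approximating sequence, linearity, positivity (a uniform limit of positive simple functions supported in $E$ with nonnegative values stays nonnegative, and positivity of a functional defined this way follows from positivity on $S_\mu$ together with the estimate), and the normalisation $I_\mu(1_E)=\mu(E)$ all transfer through the limit. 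Uniqueness is immediate: any positive linear functional on $V_\mu$ agreeing with $\mu$ on indicators of finite-measure sets agrees with $I_\mu$ on $S_\mu$, and positivity forces the bound in Step 2 (a positive functional on a space containing the constants-on-their-support is automatically bounded in this sense), hence agreement on all of $V_\mu$ by density.

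\medskip

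\emph{Step 4: Invariance.} If $\mu$ is $\Gamma$-invariant, then for $t \in \Gamma$ and $E$ with $\mu(E)<\infty$ we have $t \cdot 1_E = 1_{tE}$ and $\mu(tE) = \mu(E)$, so $I_\mu(t\cdot 1_E) = I_\mu(1_E)$. This extends by linearity to $S_\mu$ and then by the density argument of Step 3 to all of $V_\mu$, noting that translation by $t$ is an isometry of $\ell^\infty(\Gamma)$ carrying $V_\mu$ to itself (since $\mu(\supp(t\cdot f)) = \mu(t\,\supp(f)) = \mu(\supp(f))$), so it is compatible with the limiting procedure.

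\medskip

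The main obstacle is really Step 1 together with the bookkeeping in Step 3: one must be careful that ``$\mu$ finitely additive'' gives monotonicity and additivity \emph{only} on the ideal of finite-measure sets (where differences and finite unions stay finite-measure), and that the simple-function approximation can always be arranged with pieces drawn from that ideal. Once the estimate $|I_\mu(f)| \le \|f\|_\infty\mu(\supp(f))$ is in hand, everything else is a routine completion argument.
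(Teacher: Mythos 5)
Your proof is correct and follows essentially the same route as the paper's: the paper also defines the functional on simple functions, uses their uniform density in the space of functions supported in a fixed finite-measure set $F$ together with the automatic boundedness of a positive functional there, and then glues over the upward-directed family $\{V_F\}$ (your direct use of $E=\supp(f)$ is the same gluing in disguise), deriving invariance from uniqueness just as you do.
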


\begin{proof}
Let $\mathcal{F}$ be the collection of subsets of $\Gamma$ of finite measure. For each $F \in \mathcal{F}$ let $V_F$ be the subspace of  $\ell^\infty(\Gamma)$ consisting of all functions with support in $F$. The family $\{V_F\}$ is upwards directed with union $V_\mu$. Using that the set of bounded functions $F \to \C$ that take finitely many values is uniformly dense in $V_F$ it is easy to see that there is a unique positive (necessarily bounded) linear functional $I_\mu^{(F)} \colon V_F \to \C$  such that $I_\mu^{(F)} (1_E) = \mu(E)$ for all $E \subseteq F$. It follows that there exists a unique positive (unbounded) linear functional $I_\mu \colon V_\mu \to \C$ that extends all the $I_\mu^{(F)}$'s. 

When $\mu$ is translation invariant, the invariance of $I_\mu$ follows from uniqueness of $I_\mu$. 
\end{proof}

\noindent Let $\Gamma$ be a group which acts on a locally compact Hausdorff space $X$. We say that the action is \emph{co-compact} if there is a compact subset $K$ of $X$ such that $\bigcup_{t \in \Gamma} t.K = X$. Any \emph{minimal} action of any group on any locally compact Hausdorff space is automatically co-compact.

Observe that if $K$ is as above, and if $\lambda$ is a non-zero Radon measure on $X$, then $0 < \lambda(K) < \infty$. 

If $X$ is locally compact and Hausdorff and if $\Gamma$ acts co-compactly on $X$, then there is a compact subset $K \subseteq X$ such that $\bigcup_{t \in \Gamma} t. K^{\mathrm{o}} = X$, where  $K^{\mathrm{o}}$ denotes the interior of $K$. This follows easily from the fact that for each compact set $K' \subseteq X$ there is another compact set $K \subseteq X$ such that $K' \subseteq K^{\mathrm{o}}$.

\begin{lemma} \label{lm:lch}
Let $\Gamma$ be a group acting co-compactly on a locally compact Hausdorff space $X$. For each compact subset $K$ of $X$, such that $\bigcup_{t \in \Gamma} t.K^\mathrm{o} = X$, and for each $x_0 \in X$, put
$$A(K,x_0) = \{t \in \Gamma \mid t.x_0 \in K\}.$$
If $A(K,x_0)$ is non-paradoxical in $\Gamma$ for some $K$ and $x_0$ as above, then there is a non-zero $\Gamma$-invariant Radon measure $\lambda$ on $X$.
\end{lemma}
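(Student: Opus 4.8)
\medskip

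The plan is to push the finitely additive measure on $\Gamma$ supplied by Tarski's theorem forward to a Radon measure on $X$ along the orbit map $t \mapsto t.x_0$.

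Since $A := A(K,x_0)$ is non-paradoxical, Tarski's theorem gives a $\Gamma$-invariant finitely additive measure $\mu$ on $P(\Gamma)$ with $\mu(A)=1$, and Proposition~\ref{prop:a} upgrades this to a $\Gamma$-invariant positive linear functional $I_\mu\colon V_\mu\to\C$ with $I_\mu(1_E)=\mu(E)$ whenever $\mu(E)<\infty$, where $V_\mu=\{g\in\ell^\infty(\Gamma): \mu(\supp g)<\infty\}$. For $f\in C_c(X)$ I set $\phi_f(t)=f(t.x_0)$; clearly $\phi_f\in\ell^\infty(\Gamma)$ with $\|\phi_f\|_\infty\le\|f\|_\infty$. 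The crucial observation is that $\phi_f\in V_\mu$: writing $L=\supp(f)$, one has $\supp(\phi_f)\subseteq A(L,x_0)$, and since $\{t.K^{\mathrm{o}}\}_{t\in\Gamma}$ is an open cover of the compact set $L$, finitely many translates $s_1.K,\dots,s_n.K$ cover $L$, whence $A(L,x_0)\subseteq\bigcup_{i=1}^n s_iA(K,x_0)$ and so $\mu(\supp\phi_f)\le\sum_{i=1}^n\mu(s_iA(K,x_0))=n<\infty$ by invariance and monotonicity of $\mu$.

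Now define $\lambda(f)=I_\mu(\phi_f)$ for $f\in C_c(X)$. Since $f\mapsto\phi_f$ is linear and carries nonnegative functions to nonnegative functions, $\lambda$ is a positive linear functional on $C_c(X)$, and the Riesz representation theorem yields a Radon measure on $X$, again denoted $\lambda$. For $\Gamma$-invariance, write the action on functions as $(s.f)(x)=f(s^{-1}.x)$; then $\phi_{s.f}(t)=f(s^{-1}t.x_0)=\phi_f(s^{-1}t)$, so $\phi_{s.f}$ is a left translate of $\phi_f$, and $I_\mu(\phi_{s.f})=I_\mu(\phi_f)$ by the $\Gamma$-invariance of $I_\mu$; hence $\lambda(s.f)=\lambda(f)$. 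Finally, to see $\lambda\ne 0$, use Urysohn's lemma (valid since $X$ is locally compact Hausdorff and $K$ is compact) to choose $f\in C_c(X)$ with $0\le f$ and $f\ge 1_K$; then $\phi_f\ge 1_{A(K,x_0)}$ pointwise, both functions lie in $V_\mu$, and positivity of $I_\mu$ forces $\lambda(f)=I_\mu(\phi_f)\ge I_\mu(1_{A(K,x_0)})=\mu(A(K,x_0))=1>0$.

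The only genuinely delicate points are the membership $\phi_f\in V_\mu$ — this is precisely where co-compactness "with interiors" enters, via the finite subcover argument — and the final nonvanishing, which is where the normalisation $\mu(A)=1$ is used; everything else (linearity, positivity, $\Gamma$-invariance, and the appeal to Riesz) is routine bookkeeping.
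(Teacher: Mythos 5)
Your proof is correct and follows essentially the same route as the paper's: Tarski's theorem gives the invariant finitely additive measure, Proposition~\ref{prop:a} supplies the functional $I_\mu$, the finite subcover of $\supp(f)$ by translates of $K$ shows $\phi_f\in V_\mu$, and Riesz plus the test function $f\ge 1_K$ finish the argument. The only difference is cosmetic: you spell out the inclusion $A(L,x_0)\subseteq\bigcup_i s_iA(K,x_0)$ and the translation identity $\phi_{s.f}=s.\phi_f$, which the paper leaves implicit.
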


\begin{proof}
By Tarski's theorem there exists an invariant finitely additive measure $\mu$ on $\Gamma$ such that $\mu(A(K,x_0)) = 1$. 

Each compact set $L \subseteq X$ is contained in $\bigcup_{s \in F} s.K$ for some finite subset $F$ of $\Gamma$. Hence
\begin{equation} \label{eq:aa}
\mu\big(\{t \in \Gamma \mid t.x_0 \in L \}\big) \le \mu\big(\bigcup_{s \in F} sA(K,x_0)\big) \le |F| \mu(A(K,x_0)) < \infty.
\end{equation}
For each $f \in C_c(X)$ let $\hat{f} \in \ell^\infty(\Gamma)$ be given by $\hat{f}(t) = f(t.x_0)$, $t \in \Gamma$. In the notation of Proposition~\ref{prop:a}, it follows from \eqref{eq:aa} that $\hat{f} \in V_\mu$. Let $I_\mu \colon V_\mu \to \C$ be the $\Gamma$-invariant functional associated with $\mu$ constructed in  Proposition~\ref{prop:a}. Define $\Lambda \colon C_c(X) \to \C$ by
$\Lambda(f) = I_\mu(\hat{f})$. As $I_\mu$ is $\Gamma$-invariant, so is $\Lambda$. By Riesz' representation theorem there is a Radon measure $\lambda$ on $X$ such that $\Lambda(f) = \int_X f \, d\lambda$. As $\Lambda$ is $\Gamma$-invariant, so is $\lambda$. 

If $f \in C_c(X)$ is such that  $f \ge 1_K$, then $\hat{f} \ge 1_{A(K,x_0)}$, so $\Lambda(f) = I_\mu(\hat{f}) \ge \mu(A(K,x_0)) = 1$. This shows that $\lambda(K) \ge 1$, so $\lambda$ is non-zero.
\end{proof}

\noindent The action of a group $\Gamma$ on itself given by left-multiplication extends to an action of $\Gamma$ on its beta-compactification, $\beta \Gamma$. We shall refer to this action as the \emph{canonical action of $\Gamma$ on $\beta \Gamma$}.

\begin{definition} \label{def:KA-XA}
Fix a subset $A$ of $\Gamma$. Denote by $K_A$ its closure in $\beta \Gamma$, and put
$$X_A = \bigcup_{t \in \Gamma} t.K_A.$$
\end{definition}

\noindent 
As $\beta \Gamma$ is Stonian (a.k.a.\ extremely disconnected), it follows that $K_A$ is compact-open in $\beta \Gamma$.  Moreover, $X_A$ is an open $\Gamma$-invariant subset of $\beta \Gamma$. In particular, $X_A$ is locally compact and Hausdorff, and $\Gamma$ acts co-compactly on $X_A$. Note also that $\Gamma \subseteq X_A$ (if $A \ne \emptyset$), and that $\Gamma = X_A$ if and only if $A$ is finite and non-empty.

We state below two easy lemmas that shall be used frequently in what follows.

\begin{lemma} \label{lem:KA} Let $\Gamma$ be a group.
\begin{enumerate}
\item Let $A$ and $B$ be subsets of $\Gamma$. If $A \cap B = \emptyset$, then $K_A \cap K_B = \emptyset$.
\item For any subset $A$ of $\Gamma$ we have $A = K_A \cap \Gamma$.
\item If $A$ and $B$ are subsets of $\Gamma$, then $A \subseteq B$ if and only if $K_A \subseteq K_B$.
\item If $K$ is a compact-open subset of $\beta \Gamma$ and if $A = K \cap \Gamma$, then $K=K_A$.
\item If $A$ is a subset of $\Gamma$ and $t \in \Gamma$, then $t.K_A = K_{tA}$.
\item $\beta \Gamma$ and $\emptyset$ are the only $\Gamma$-invariant compact-open subset of $\beta \Gamma$.
\end{enumerate}
\end{lemma}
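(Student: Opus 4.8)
The plan is to reduce everything to a single structural fact about $\beta \Gamma$ and then read off the six items formally. The fact I would establish first is that, for $\Gamma$ discrete, the assignment $A \mapsto K_A$ is a Boolean isomorphism of the power set $P(\Gamma)$ onto the lattice of clopen subsets of $\beta \Gamma$, with inverse $K \mapsto K \cap \Gamma$; in particular $K_{A \cap B} = K_A \cap K_B$, $K_{\Gamma \setminus A} = \beta \Gamma \setminus K_A$, and $K_A \cap \Gamma = A$ for all $A, B \subseteq \Gamma$. In the language of the paper this is most quickly seen via Gelfand duality $\ell^\infty(\Gamma) = C(\beta \Gamma)$: the indicator $1_A$ is a projection, hence equals $1_K$ for a unique clopen $K \subseteq \beta \Gamma$; since evaluating the Gelfand transform of $1_A$ at $t \in \Gamma$ recovers $1_A(t)$ one gets $K \cap \Gamma = A$, and a density argument (using that $\Gamma$ is dense in $\beta \Gamma$ and that $K$ and $\beta \Gamma \setminus K$ are closed) forces $K = \overline{A} = K_A$; multiplicativity $1_A 1_B = 1_{A \cap B}$ then gives $K_A \cap K_B = K_{A \cap B}$. (Equivalently one may argue with ultrafilters, identifying $K_A$ with $\{p \in \beta \Gamma : A \in p\}$.) This is where the only genuine content of the lemma lies; the remaining verifications are bookkeeping.

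Granting this, item (i) is immediate: $A \cap B = \emptyset$ gives $K_A \cap K_B = K_{A \cap B} = K_\emptyset = \emptyset$. Item (ii) is the displayed identity $K_A \cap \Gamma = A$; alternatively, $A \subseteq K_A \cap \Gamma$ is clear, and if $t \in \Gamma \setminus A$ then applying (i) to the disjoint pair $\{t\}$, $A$ and using $K_{\{t\}} = \{t\}$ (a finite set is closed in the Hausdorff space $\beta \Gamma$) yields $t \notin K_A$. Item (iii): $A \subseteq B$ forces $K_A \subseteq K_B$ by monotonicity of closure, and conversely $K_A \subseteq K_B$ gives $A = K_A \cap \Gamma \subseteq K_B \cap \Gamma = B$ by (ii). Item (iv): a compact-open subset $K$ of $\beta \Gamma$ is clopen (in a compact Hausdorff space compact-open $=$ closed-open), so with $A := K \cap \Gamma$ we get $K = K_A$ either directly from the isomorphism, or because $A$ is dense in the open set $K$ while $K$ is closed, so $K = \overline{A} = K_A$.

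For item (v), recall that the canonical action of $\Gamma$ on $\beta \Gamma$ is by homeomorphisms extending the left translations; a homeomorphism sends $\overline{A}$ to the closure of the image of $A$, and $x \mapsto t.x$ sends $A \subseteq \Gamma$ to $tA$, so $t.K_A = \overline{tA} = K_{tA}$. For item (vi), let $K \subseteq \beta \Gamma$ be $\Gamma$-invariant, compact-open and non-empty, and put $A := K \cap \Gamma$, so that $A \neq \emptyset$ (as $\Gamma$ is dense) and $K = K_A$ by (iv). Invariance together with (v) gives $K_{tA} = t.K_A = K_A$ for every $t \in \Gamma$, hence $tA = A$ for all $t$ by (iii); fixing $a_0 \in A$ and, for an arbitrary $g \in \Gamma$, taking $t = g a_0^{-1}$, we get $g = t a_0 \in tA = A$, so $A = \Gamma$ and $K = K_\Gamma = \beta \Gamma$. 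The only step calling for any care is the Boolean identification of the first paragraph, and that is entirely standard; once it is in hand, the rest is purely formal.
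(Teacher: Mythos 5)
Your proof is correct, but it is organised differently from the paper's. The paper proves each of the six items by hand with elementary point-set topology in $\beta\Gamma$: (i) via the observation that $A$ is open and $K_B$ is clopen, so $\overline{A}\subseteq\beta\Gamma\setminus K_B$; (ii) via the fact that $\{t\}$ is open in $\beta\Gamma$ and must meet $A$ if it meets $K_A$; (iv) via density of $\Gamma$; and (v) via a two-inclusion trick ($tA\subseteq t.K_A$ gives $K_{tA}\subseteq t.K_A$, then substitute $t^{-1}$) rather than invoking that $t.$ is a homeomorphism commuting with closure. You instead front-load a single structural fact --- that $A\mapsto K_A$ is a Boolean isomorphism $P(\Gamma)\to\mathrm{Clopen}(\beta\Gamma)$ with inverse $K\mapsto K\cap\Gamma$, obtained from Gelfand duality $\ell^\infty(\Gamma)\cong C(\beta\Gamma)$ --- and then read off the six items formally. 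This is precisely the viewpoint the authors themselves articulate in the remark \emph{after} the lemma (``the canonical isomorphism carries $1_A$ onto $1_{K_A}$''), so your route is a legitimate reorganisation: it buys you (i) for free from $K_{A\cap B}=K_A\cap K_B$ and makes (v) and (vi) one-liners, at the cost of having to justify the Boolean identification carefully (projections in $C(Y)$ are indicators of clopen sets, the Gelfand transform restricts to the identity on $\Gamma$, and the density argument giving $K=\overline{A}$), whereas the paper's item-by-item proof needs nothing beyond the density of $\Gamma$ and the fact that $\beta\Gamma$ is Stonian. Your individual derivations of (ii), (iii), (iv) and (vi) in fact coincide with the paper's; all steps check out, including the deduction $tA=A$ for all $t$ forcing $A=\Gamma$ in (vi).
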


\begin{proof} (i). As $A$ is an open subset of $\beta \Gamma$ we have $K_B \subseteq \beta \Gamma \setminus A$. Hence $A \subseteq \beta \Gamma \setminus K_B$, which entails that $K_A \subseteq \beta \Gamma \setminus K_B$, because $K_B$ is open.

(ii). It is clear that $A \subseteq K_A \cap \Gamma$. Conversely, if $t \in K_A \cap \Gamma$, then $\{t\}$ is an open subset of $\beta \Gamma$ that intersects $K_A$, so it also intersects $A$, i.e., $t \in A$.

(iii). The non-trivial implication follows from part (ii).

(iv). It is clear that $K_A \subseteq K$. If $x \in K$ and if $V$ is an open neighbourhood of $x$, then $K \cap V$ is a non-empty open subset of $\beta \Gamma$. As $\Gamma$ is an open and dense subset of $\beta \Gamma$ we deduce that $A \cap V = \Gamma \cap K \cap V$ is non-empty. As $V$ was an arbitrary open neighbourhood of $x$, we conclude that $x \in K_A$.

(v). As $A \subseteq K_A$ we have $tA \subseteq t.K_A$, so $K_{tA} \subseteq t.K_A$. Applying this inclusion with $t^{-1}$ in the place of $t$ we get
$$t.K_A = t.K_{t^{-1}t.A} \subseteq t.\big(t^{-1}.K_{tA}\big) = K_{tA}.$$

(vi). This follows from (iv) and (v) and the fact that $\Gamma$ and $\emptyset$ are the only $\Gamma$-invariant subsets of $\Gamma$.
\end{proof}

\noindent The lemma above says that there is a one-to-one correspondence between subsets of $\Gamma$ and compact-open subsets of $\beta \Gamma$. This is not surprising. Indeed the function algebras $\ell^\infty(\Gamma)$ and $C(\beta \Gamma)$ are canonically isomorphic (by the definition of the beta-compactification!). The canonical isomorphism carries the projection $1_A \in \ell^\infty(\Gamma)$ onto $1_{K_A} \in C(\beta \Gamma)$. We shall work with both pictures in this paper.

If $A$ and $B$ are subsets of a group $\Gamma$, then write $A \propto_\Gamma B$ if $A$ is \emph{$B$-bounded} i.e., if $A \subseteq \bigcup_{t \in F} tB$ for some finite subset $F$ of $\Gamma$.

\begin{lemma} \label{lm:A-B-comp} Let $A$ and $B$ be subsets of a group $\Gamma$.
\begin{enumerate}
\item $K_B \subseteq X_A$ if and only if $B \propto_\Gamma A$.
\item $X_A = X_B$ if and only if $A \propto_\Gamma B \propto_\Gamma A$.
\item If $A \propto_\Gamma B \propto_\Gamma A$, then $A$ is paradoxical if and only if $B$ is paradoxical.
\end{enumerate}
\end{lemma}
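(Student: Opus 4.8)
The plan is to prove the three parts of Lemma~\ref{lm:A-B-comp} in order, leveraging the dictionary between subsets of $\Gamma$ and compact-open subsets of $\beta\Gamma$ established in Lemma~\ref{lem:KA}, and deducing (ii) and (iii) from (i).

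For part (i), I would argue both directions directly. If $B \propto_\Gamma A$, then $B \subseteq \bigcup_{t\in F} tA$ for some finite $F \subseteq \Gamma$, hence $K_B \subseteq K_{\bigcup_{t\in F}tA}$ by Lemma~\ref{lem:KA}(iii). Since the closure operation $A \mapsto K_A$ respects finite unions (a finite union of clopen sets is clopen, and both sides contain $\bigcup_{t\in F}tA$ as a dense subset — or simply check $K_{A\cup B}=K_A\cup K_B$ using Lemma~\ref{lem:KA}(iii) twice), we get $K_B \subseteq \bigcup_{t\in F} K_{tA} = \bigcup_{t\in F} t.K_A \subseteq X_A$, using Lemma~\ref{lem:KA}(v). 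Conversely, suppose $K_B \subseteq X_A = \bigcup_{t\in\Gamma} t.K_A$. Since $K_B$ is compact and each $t.K_A$ is open, finitely many of them cover $K_B$: $K_B \subseteq \bigcup_{t\in F} t.K_A = \bigcup_{t\in F} K_{tA} = K_{\bigcup_{t\in F} tA}$, where the last equality again uses that $K$ commutes with finite unions. By Lemma~\ref{lem:KA}(iii) this gives $B \subseteq \bigcup_{t\in F} tA$, i.e., $B \propto_\Gamma A$. I should insert a one-line remark (or a sub-claim) establishing $K_{A\cup B} = K_A \cup K_B$, since it is used twice; this is the one small gap to fill and is the only place needing care.

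For part (ii), note that $X_A = X_B$ iff ($X_A \subseteq X_B$ and $X_B \subseteq X_A$). Now $X_A \subseteq X_B$ iff $K_A \subseteq X_B$ (since $X_B$ is $\Gamma$-invariant and $X_A = \bigcup_t t.K_A$), which by part (i) is equivalent to $A \propto_\Gamma B$. Symmetrically $X_B \subseteq X_A$ iff $B \propto_\Gamma A$, giving the claim.

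For part (iii), assume $A \propto_\Gamma B \propto_\Gamma A$; by part (ii) this means $X_A = X_B =: X$. The relation $\propto_\Gamma$ is clearly symmetric here and it suffices to show: if $B$ is paradoxical then so is $A$. Suppose $B = B_0 \sqcup B_1$ with $B \sim_\Gamma B_0$ and $B \sim_\Gamma B_1$. Since $A \propto_\Gamma B$, we have $A \subseteq \bigcup_{t\in F} tB$ for a finite $F$; replacing each $tB$ by a disjoint refinement we may write $A$ as a finite disjoint union of translates of subsets of $B$, i.e., $A \precsim_\Gamma B \times (\text{finite set})$ in spirit — more precisely $A \precsim_\Gamma \coprod_{t\in F} tB$. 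Combined with $B \propto_\Gamma A$ giving $B \precsim_\Gamma \coprod_{s\in F'} sA$, and using that $B$ (being paradoxical) satisfies $B \sim_\Gamma B \sqcup B \sim_\Gamma \coprod^n B$ for every $n$, one gets $A \precsim_\Gamma B$ and $B \precsim_\Gamma A$; by the Banach--Schröder--Bernstein theorem for the equidecomposability relation $\sim_\Gamma$, $A \sim_\Gamma B$, and paradoxicality is preserved under $\sim_\Gamma$. Alternatively, and more cleanly, I would phrase (iii) purely via Tarski's theorem as quoted in the text: $A$ is non-paradoxical iff there is a $\Gamma$-invariant finitely additive $\mu$ on $P(\Gamma)$ with $\mu(A)=1$; given such $\mu$, the bound $A \propto_\Gamma B$ forces $\mu(B) < \infty$ (as in~\eqref{eq:aa}), and $\mu(B) > 0$ would follow if $\mu(A)>0$ and $B \propto_\Gamma A$ — wait, that needs $\mu(A) \le |F'|\mu(B)$, so $\mu(B) \ge 1/|F'| > 0$; rescaling $\mu$ so that $\mu(B)=1$ gives a witness that $B$ is non-paradoxical. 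By symmetry the converse holds too. This Tarski-based argument is the approach I would actually write up, since it avoids Banach--Schröder--Bernstein bookkeeping.

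The main obstacle is purely organizational: part (i) requires the auxiliary fact that $K_{-}$ commutes with finite unions, which is not among the listed items of Lemma~\ref{lem:KA} but follows immediately from \ref{lem:KA}(iii); and part (iii), if done via equidecomposability, requires invoking Banach--Schröder--Bernstein, whereas the Tarski-measure route (which the paper has already set up) is shorter and self-contained. No deep idea is needed — the content is entirely in the compact-open/clopen dictionary plus the finiteness coming from compactness of $K_B$.
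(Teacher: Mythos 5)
Your proposal is correct and follows essentially the same route as the paper: compactness of $K_B$ plus the clopen/subset dictionary of Lemma~\ref{lem:KA} for (i), reduction of (ii) to (i) via $\Gamma$-invariance of $X_A$, and the Tarski-measure argument with the inequalities $\lambda(B)\le|F|\lambda(A)$ and $\lambda(A)\le|F'|\lambda(B)$ for (iii), which is exactly what the paper writes. The Banach--Schr\"oder--Bernstein sketch you wisely discard is the only shaky part (in particular $B\precsim_\Gamma A$ does not follow directly from $B\subseteq\bigcup_{s\in F'}sA$ without further work), but since you commit to the Tarski route the write-up is sound.
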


\begin{proof}
(i). If $B \subseteq \bigcup_{t \in F} tA$ for some finite subset $F$ of $\Gamma$, then $$K_B \subseteq  K_{\bigcup_{t \in F} tA} = \bigcup_{t \in F} t.K_A \subseteq X_A$$ by Lemma~\ref{lem:KA}(iii) and (v).  Conversely, if $K_B$ is a subset of $X_A = \bigcup_{t \in \Gamma} t.K_A$, then $K_B \subseteq \bigcup_{t \in F} t.K_A = K_{\bigcup_{t \in F} tA}$ for some finite subset $F$ of $\Gamma$ by compactness of $K_B$. Hence $B$ is contained in $\bigcup_{t \in F} tA$ by Lemma~\ref{lem:KA}(iii).

(ii). Note that $K_B \subseteq X_A$ if and only if $X_B \subseteq X_A$ because $X_A$ is $\Gamma$-invariant. Thus (ii) follows from (i).

(iii). Suppose that  $B \subseteq \bigcup_{t \in F} tA$ and $A \subseteq \bigcup_{t \in F'} tB$ for some finite subsets $F$ and $F'$ of $\Gamma$. Suppose that $A$ is non-paradoxical. Then, by Tarski's theorem, we can find a finitely additive invariant measure $\lambda$ on $\Gamma$ such that $\lambda(A)=1$. Since $\lambda(B) \le |F|\lambda(A) $ and $\lambda(A) \le |F'|\lambda(B)$, we conclude that $0 < \lambda(B) < \infty$, which in turns implies that $B$ is non-paradoxical.
\end{proof}

\noindent It follows from Lemma~\ref{lem:KA}(vi) and from Lemma~\ref{lm:A-B-comp} that if $A$ is a non-empty subset of $\Gamma$, then $X_A$ is compact if and only if $X_A = \beta \Gamma$ if and only if $\Gamma \propto A$.

\begin{proposition} \label{prop:XA}
Let $A$ be a non-empty subset of $\Gamma$. There is a non-zero $\Gamma$-invariant Radon measure on $X_A$ if and only if $A$ is non-paradoxical in $\Gamma$.
\end{proposition}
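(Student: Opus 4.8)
The plan is to prove both implications, using the groundwork already laid.

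First, the easy direction. Suppose there is a non-zero $\Gamma$-invariant Radon measure $\lambda$ on $X_A$. Since $K_A$ is compact-open in $X_A$ and $\Gamma.K_A$ covers $X_A$ (indeed $X_A = \bigcup_{t\in\Gamma} t.K_A$), we have $0 < \lambda(K_A) < \infty$ by the observation recorded just before Lemma~\ref{lm:lch}. Define a finitely additive measure on $P(\Gamma)$ by $\mu(B) = \lambda(K_B)$ for $B \subseteq \Gamma$; this is well-defined and finitely additive because $B \mapsto K_B$ sends disjoint sets to disjoint sets (Lemma~\ref{lem:KA}(i)) and respects finite unions (Lemma~\ref{lem:KA}(iii),(v), via $K_{B\cup C} = K_B \cup K_C$), and it is $\Gamma$-invariant because $t.K_B = K_{tB}$ (Lemma~\ref{lem:KA}(v)) and $\lambda$ is invariant. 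Then $\mu(A) = \lambda(K_A) \in (0,\infty)$, so after normalising $\mu$ we get an invariant finitely additive measure on $P(\Gamma)$ with $\mu(A) = 1$; by Tarski's theorem $A$ is non-paradoxical. (One must be slightly careful that $\mu$ is finite on the sets that matter; but all we need is one invariant finitely additive measure giving $A$ a finite nonzero value, which Tarski then upgrades.)

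For the converse, suppose $A$ is non-paradoxical. The strategy is to apply Lemma~\ref{lm:lch} to the co-compact action of $\Gamma$ on $X = X_A$. We need a compact set $K \subseteq X_A$ with $\bigcup_{t\in\Gamma} t.K^{\mathrm o} = X_A$ and a point $x_0 \in X_A$ such that $A(K,x_0) = \{t \in \Gamma : t.x_0 \in K\}$ is non-paradoxical in $\Gamma$. The natural choices are $K = K_A$ — which is compact-open in $X_A$, so $K^{\mathrm o} = K_A$ and $\bigcup_t t.K_A = X_A$ by definition of $X_A$ — and $x_0 = e$, the identity of $\Gamma$, viewed as a point of $\Gamma \subseteq X_A$. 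Then $t.x_0 = t \in \Gamma$, and $t \in K_A$ iff $t \in K_A \cap \Gamma = A$ by Lemma~\ref{lem:KA}(ii); hence $A(K_A, e) = A$, which is non-paradoxical by hypothesis. Lemma~\ref{lm:lch} then delivers a non-zero $\Gamma$-invariant Radon measure on $X_A$.

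The main point to watch is the first direction — checking that $\mu(B) := \lambda(K_B)$ is genuinely a finitely additive measure on all of $P(\Gamma)$ and genuinely $\Gamma$-invariant — but this is entirely mechanical given Lemma~\ref{lem:KA}. The converse direction is essentially immediate once one makes the right choices of $K$ and $x_0$, so there is really no serious obstacle; the work was all front-loaded into Lemma~\ref{lm:lch} and Lemma~\ref{lem:KA}. The only genuinely delicate step is verifying that $\lambda(K_A)$ is both positive and finite, which is exactly the remark preceding Lemma~\ref{lm:lch} applied with the compact set $K_A$ (note $K_A$ need not be all of a compact space, but it is a compact subset of the locally compact space $X_A$ with $\Gamma.K_A = X_A$).
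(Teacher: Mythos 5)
Your proof is correct and follows the paper's argument in both directions: the ``non-paradoxical $\Rightarrow$ measure'' half via Lemma~\ref{lm:lch} with $K=K_A$, $x_0=e$ and the identity $A=A(K_A,e)$ from Lemma~\ref{lem:KA}(ii), and the other half by pulling $\lambda$ back to an invariant finitely additive measure on $P(\Gamma)$ and invoking Tarski. The one point to tighten is that $\lambda(K_B)$ is literally undefined when $B$ is not $A$-bounded, since then $K_B\not\subseteq X_A$ (Lemma~\ref{lm:A-B-comp}(i)) and $\lambda$ lives only on $X_A$; the paper's proof resolves this by setting $\mu(B)=\lambda(K_B)$ only for $A$-bounded $B$ and $\mu(B)=\infty$ otherwise (equivalently one could take $\mu(B)=\lambda(K_B\cap X_A)$), after which finite additivity, invariance, and $\mu(A)=\lambda(K_A)\in(0,\infty)$ go through exactly as you describe.
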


\begin{proof}
In the notation of Lemma~\ref{lm:lch} it follows from Lemma~\ref{lem:KA}(ii) that $A = A(K_A,e)$, and hence that $X_A$ admits a non-zero $\Gamma$-invariant Radon measure if $A$ is non-paradoxical.

Suppose next that  there is a non-zero $\Gamma$-invariant Radon measure $\lambda$ on $X_A$. Then, necessarily, $0 < \lambda(K_A) < \infty$. Let $\Omega_A$ be the set of all $A$-bounded subsets of $\Gamma$, i.e., $E \in \Omega_A$ if and only if $E \propto_\Gamma A$ if and only if $K_E \subseteq X_A$, cf.\ Lemma~\ref{lm:A-B-comp}(i). 
Let $\mu$ be the invariant finitely additive measure on $\Gamma$ defined by 
$$\mu(E) = \begin{cases} \lambda(K_E), & E \in \Omega_A, \\
\infty, & E \notin \Omega_A. \end{cases}$$
Use Lemma~\ref{lem:KA}(i) to see that $\mu$, indeed, is finitely additive. As $\mu(A) = \lambda(K_A)$ by definition we conclude that $A$ is non-paradoxical.
\end{proof}

\begin{proposition} \label{prop:lch}
A group $\Gamma$ is supramenable if and only if whenever it acts co-compactly on a locally compact Hausdorff space $X$, then $X$ admits a non-zero $\Gamma$-invariant Radon measure. 
\end{proposition}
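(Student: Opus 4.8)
The plan is to prove Proposition~\ref{prop:lch} by combining the two directions that have essentially already been set up: the ``easy'' direction through Lemma~\ref{lm:lch} and Tarski's theorem, and the ``hard'' direction through the universal example $X_A$ from Definition~\ref{def:KA-XA} and Proposition~\ref{prop:XA}.

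\textbf{Sufficiency of the Radon-measure condition.} Suppose every co-compact action of $\Gamma$ on a locally compact Hausdorff space admits a non-zero invariant Radon measure. To see that $\Gamma$ is supramenable, fix an arbitrary non-empty subset $A \se \Gamma$; by Tarski's theorem it suffices to show $A$ is non-paradoxical. The space $X_A$ is locally compact Hausdorff and carries a co-compact $\Gamma$-action (this was recorded right after Definition~\ref{def:KA-XA}), so by hypothesis $X_A$ admits a non-zero $\Gamma$-invariant Radon measure. By Proposition~\ref{prop:XA}, this forces $A$ to be non-paradoxical. Since $A$ was arbitrary and non-empty, $\Gamma$ is supramenable.

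\textbf{Necessity.} Conversely, suppose $\Gamma$ is supramenable and let $\Gamma$ act co-compactly on a locally compact Hausdorff space $X$. Choose a compact set $K \se X$ with $\bigcup_{t \in \Gamma} t.K^\mathrm{o} = X$ (possible by the remark preceding Lemma~\ref{lm:lch}), and fix any $x_0 \in X$. Form $A = A(K,x_0) = \{t \in \Gamma \mid t.x_0 \in K\}$ as in Lemma~\ref{lm:lch}. If $A = \emptyset$ we replace $K$ by $t_0 K$ for a suitable $t_0$ so that $x_0$ lands in the interior of the new compact set; more simply, since $\{t.K^\mathrm{o}\}_{t}$ covers $X$, some translate of $K$ contains $x_0$, so after replacing $K$ by that translate (which still satisfies the covering condition, the action being by homeomorphisms) we may assume $A \neq \emptyset$. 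Supramenability of $\Gamma$ gives an invariant finitely additive measure on $P(\Gamma)$ assigning $A$ a positive finite value, i.e.\ $A$ is non-paradoxical. Lemma~\ref{lm:lch} then produces a non-zero $\Gamma$-invariant Radon measure on $X$, as desired.

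\textbf{Main obstacle.} The only genuinely delicate point is the bookkeeping in the necessity direction: one must ensure the chosen basepoint $x_0$ actually has some translate inside the interior of a compact set satisfying $\bigcup_t t.K^\mathrm{o} = X$, so that $A(K,x_0)$ is non-empty and Lemma~\ref{lm:lch} applies. This is immediate from co-compactness, but should be stated cleanly. Everything else is a direct appeal to Lemma~\ref{lm:lch}, Proposition~\ref{prop:XA}, and Tarski's theorem, so the proof will be short.
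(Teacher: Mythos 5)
Your proof is correct and follows essentially the same route as the paper: the universal example $X_A$ together with Proposition~\ref{prop:XA} for one direction, and Lemma~\ref{lm:lch} for the other. The only superfluous step is the replacement of $K$ in the necessity direction: since $\bigcup_{t\in\Gamma}t.K^{\mathrm{o}}=X$, some $t$ satisfies $t^{-1}.x_0\in K$, so $A(K,x_0)$ is automatically non-empty for every $x_0$ and no adjustment of $K$ is needed.
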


\begin{proof} 
If $\Gamma$ is not supramenable and if $A$ is a non-empty paradoxical subset of $\Gamma$, then $\Gamma \curvearrowright X_A$ is an example of a co-compact action on a locally compact Hausdorff space that admits no non-zero invariant Radon measure, cf.\ Proposition~\ref{prop:XA}.

The reverse implication follows from Lemma~\ref{lm:lch} (and the remark above that lemma).
\end{proof}

\noindent Proposition~\ref{prop:lch} says that every non-supramenable group admits a co-compact action on a locally compact Hausdorff space for which there is no non-zero invariant Radon measures. Towards proving Theorem~\ref{thm:d:intro}, we shall later improve this result and show that one always can choose this space to be the locally compact non-compact Cantor set, and that one further can choose the action to be free, minimal and \emph{purely infinite} (see Definition~\ref{def:pi-action}).

\begin{example}\label{exo:semi}
Let $\Gamma$ denote the $ax+b$ group with $a \in \Q^+$ and $b \in \Q$. It is well-known that $\Gamma$ is non-supramenable. In fact, $\Gamma$ contains a free semigroup of two generators. One can take the two generators to be $2x$ and $2x+1$. 

It also follows from Proposition~\ref{prop:lch} that $\Gamma$ is non-supramenable, as the canonical action of $\Gamma$ on $\R$ admits no non-zero invariant Radon measure. 
 We can use  Lemma~\ref{lm:lch}  to construct paradoxical subsets of $\Gamma$ other than the ones coming from free sub-semigroups. Indeed, applying Lemma~\ref{lm:lch} to the compact set $K = [\alpha,\beta] \subseteq \R$ and to $x_0 = \gamma \in \R$ we find that
$$\{ax+b \mid \alpha \le a \gamma + b \le \beta\} \subseteq \Gamma$$
is paradoxical whenever $\alpha, \beta, \gamma \in \R$ and $\alpha < \beta$. 
\end{example}

\section{A geometric description of supramenable groups}

\noindent We shall here describe supramenability in terms of the existence of injective quasi-isometric embeddings of free groups. 

\begin{definition} \label{def:Lipschitz}
Let $\Gamma$ and $\Lambda$ be groups. A map $f \colon \Gamma \to \Lambda$ is said to be \emph{Lipschitz} if for every finite set $S \subseteq \Gamma$ there is a finite set $T \subseteq \Lambda$ such that
\begin{equation} \label{eq:L1}
\forall x,y \in \Gamma: xy^{-1} \in S \implies f(x)f(y)^{-1} \in T.
\end{equation}
A map $f \colon \Gamma \to \Lambda$ is said to be a \emph{quasi-isometric embedding} if it is Lipschitz and satisfies that for every finite set $T \subseteq \Lambda$ there is a finite set $S \subseteq \Gamma$ such that
\begin{equation} \label{eq:L2}
\forall x,y \in \Gamma: f(x)f(y)^{-1} \in T \implies xy^{-1} \in S.
\end{equation}
\end{definition}
This definition corresponds to the usual terminology if $\Gamma$ and $\Lambda$ are considered as metric spaces with a right-invariant word metric. If a map $f$ has either of the two properties, the map $\widehat{f}$ defined by $\widehat{f}(x) = f(x^{-1})^{-1}$ satisfies its left-invariant analogue and vice versa. 

If \eqref{eq:L1} holds for some $S \subseteq \Gamma$ and some $T \subseteq \Lambda$, then, for each integer $n \ge 1$, \eqref{eq:L1} also holds with $S^n$ and $T^n$ in the place of $S$ and $T$, respectively. In particular, if $\Gamma$ is finitely generated, then $f \colon \Gamma \to \Lambda$ is Lipschitz if \eqref{eq:L1} holds for some finite symmetric generating set $S$ for $\Gamma$ and some finite set $T \subseteq \Lambda$. 

Every group homomorphism is Lipschitz, and every group homomorphism with finite kernel is a quasi-isometric embedding.

Let $\Gamma$ be a group and let $A \subseteq \Gamma$. A map $\sigma \colon A \to \Gamma$ is said to be a \emph{piecewise translation} if it is injective and if there is a finite set $S \subseteq \Gamma$ such that $\sigma(x)x^{-1} \in S$ for all $x \in A$.  The composition of piecewise translations (when defined) is again a piecewise translation.

If $\sigma \colon A \to \Gamma$ is a piecewise translation, then $\sigma(A_0) \sim_\Gamma A_0$ for all $A_0 \subseteq A$. A non-empty subset $A \subseteq \Gamma$ is paradoxical if and only if there are piecewise translations $\sigma^\pm \colon A \to A$ with disjoint images. In this case, for each natural number $n$, there are piecewise translations $\sigma_j \colon A \to A$, $j = 1,2 , \dots, n$, with disjoint images. 

\begin{lemma} \label{lm:piecewise_translation}
Let $\Gamma$ and $\Lambda$ be groups and let $f \colon \Gamma \to \Lambda$ be an injective Lipschitz map. Let $A$ be a subset of $\Gamma$ and let $\sigma \colon A \to \Gamma$ be a piecewise translation. It follows that the map $\tau \colon f(A) \to \Lambda$, given by $\tau \circ f = f \circ \sigma$, is a piecewise translation.
\end{lemma}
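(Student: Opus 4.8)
The plan is to verify that $\tau$ is well-defined, injective, and satisfies the defining bound of a piecewise translation, in that order. Since $f$ is injective, the formula $\tau(f(x)) = f(\sigma(x))$ for $x \in A$ unambiguously defines a map $\tau \colon f(A) \to \Lambda$: if $f(x) = f(x')$ with $x, x' \in A$, then $x = x'$, so $f(\sigma(x)) = f(\sigma(x'))$. Thus well-definedness is immediate from injectivity of $f$ and is really just the content of the equation $\tau \circ f = f \circ \sigma$ read correctly.

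For injectivity of $\tau$: suppose $\tau(f(x)) = \tau(f(x'))$ with $x, x' \in A$, i.e.\ $f(\sigma(x)) = f(\sigma(x'))$. Since $f$ is injective this gives $\sigma(x) = \sigma(x')$, and since $\sigma$ is injective (being a piecewise translation) we get $x = x'$, hence $f(x) = f(x')$. So $\tau$ is injective on $f(A)$.

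The one substantive point — and the step I expect to be the only real obstacle, though a mild one — is producing the finite set $T' \subseteq \Lambda$ with $\tau(y) y^{-1} \in T'$ for all $y \in f(A)$. Write $y = f(x)$ with $x \in A$. Let $S \subseteq \Gamma$ be a finite set witnessing that $\sigma$ is a piecewise translation, so $\sigma(x) x^{-1} \in S$ for all $x \in A$; equivalently, $\sigma(x)$ and $x$ satisfy $\sigma(x) \cdot x^{-1} \in S$. Now apply the Lipschitz property of $f$ to this finite set $S$: there is a finite set $T \subseteq \Lambda$ such that for all $u, v \in \Gamma$, $uv^{-1} \in S \implies f(u) f(v)^{-1} \in T$. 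Taking $u = \sigma(x)$ and $v = x$, we get $f(\sigma(x)) f(x)^{-1} \in T$, that is, $\tau(y) y^{-1} \in T$. So $T' = T$ works, and $\tau$ is a piecewise translation.

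Thus the proof is a direct unwinding of definitions: injectivity of $f$ gives well-definedness, injectivity of $f$ and $\sigma$ give injectivity of $\tau$, and the Lipschitz bound for $f$ converts the finite displacement set $S$ for $\sigma$ into a finite displacement set $T$ for $\tau$. No compactness or measure-theoretic input is needed; the only thing to be careful about is applying the Lipschitz condition to the pair $(\sigma(x), x)$ rather than to $(x, \sigma(x))$ so that the product $\sigma(x) x^{-1}$ lands in $S$.
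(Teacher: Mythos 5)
Your proof is correct and follows essentially the same route as the paper: well-definedness and injectivity of $\tau$ from injectivity of $f$ and $\sigma$, and then applying the Lipschitz condition \eqref{eq:L1} to the finite set $S$ witnessing that $\sigma$ is a piecewise translation, with the pair $(\sigma(x),x)$, to obtain the finite set $T$. The extra care you take about the order of the pair is exactly the computation $\tau(f(x))f(x)^{-1}=f(\sigma(x))f(x)^{-1}\in T$ that the paper records.
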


\begin{proof} Injectivity (and well-definedness) of $\tau$ follows from injectivity of $f$ and of $\sigma$.

There is a finite set $S \subseteq \Gamma$ such that $\sigma(x)x^{-1} \in S$ for all $x \in A$. As $f$ is Lipschitz there is a finite set $T \subseteq \Lambda$ such that $f(x)f(y)^{-1} \in T$ whenever $xy^{-1} \in S$.  Hence 
$$\tau(f(x)) f(x)^{-1} = f(\sigma(x))f(x)^{-1} \in T$$
for all $x \in A$, which proves that $\tau$ is a piecewise translation.
\end{proof}

\begin{lemma} \label{lm:Lip-paradoxical}
Let $\Gamma$ and $\Lambda$ be groups and let $f \colon \Gamma \to \Lambda$ be an injective Lipschitz map. Then $f(A)$ is a paradoxical subset of $\Lambda$ whenever $A$ is a paradoxical subset of $\Gamma$. 
\end{lemma}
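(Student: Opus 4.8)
The plan is to reduce the statement about paradoxicality of $f(A)$ directly to Lemma~\ref{lm:piecewise_translation}, using the characterisation of paradoxical sets in terms of piecewise translations with disjoint images that was recorded just before that lemma. First I would unpack the hypothesis: since $A$ is paradoxical in $\Gamma$, there are piecewise translations $\sigma^\pm\colon A\to A$ with disjoint images $\sigma^+(A)\cap\sigma^-(A)=\emptyset$. I would then apply Lemma~\ref{lm:piecewise_translation} to each of $\sigma^+$ and $\sigma^-$, obtaining maps $\tau^\pm\colon f(A)\to\Lambda$ determined by $\tau^\pm\circ f=f\circ\sigma^\pm$, each of which is a piecewise translation.

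Next I would check that $\tau^+$ and $\tau^-$ actually take their values in $f(A)$, not merely in $\Lambda$: since $\sigma^\pm(A)\subseteq A$, we have $\tau^\pm(f(x))=f(\sigma^\pm(x))\in f(A)$ for every $x\in A$, so $\tau^\pm$ restricts to a piecewise translation $f(A)\to f(A)$. The last point is to verify that the images of $\tau^+$ and $\tau^-$ are disjoint: because $f$ is injective, $\tau^\pm(f(A))=f(\sigma^\pm(A))$, and since $\sigma^+(A)$ and $\sigma^-(A)$ are disjoint subsets of $\Gamma$, their images under the injective map $f$ are disjoint as well. Hence $f(A)$ carries two piecewise translations into itself with disjoint images, so $f(A)$ is paradoxical (and it is non-empty because $A$ is and $f$ is defined on all of $\Gamma$).

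There is essentially no serious obstacle here; the only thing to be slightly careful about is the bookkeeping of which map $\tau^\pm$ is defined on which set and that injectivity of $f$ is what both guarantees the defining relation $\tau^\pm\circ f = f\circ\sigma^\pm$ determines $\tau^\pm$ unambiguously and converts the disjointness of the images of $\sigma^+,\sigma^-$ into disjointness of the images of $\tau^+,\tau^-$. I would also note in passing that exactly the same argument, applied to a family $\sigma_1,\dots,\sigma_n\colon A\to A$ of piecewise translations with pairwise disjoint images, shows that $f(A)$ admits $n$ piecewise self-translations with pairwise disjoint images, which is the form of the statement that gets used later; but for the lemma as stated the two-piece version suffices.
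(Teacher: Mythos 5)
Your argument is correct and follows essentially the same route as the paper's own proof: take piecewise translations $\sigma^\pm\colon A\to A$ with disjoint images, transport them via Lemma~\ref{lm:piecewise_translation} to piecewise translations $\tau^\pm\colon f(A)\to f(A)$ with $\tau^\pm\circ f=f\circ\sigma^\pm$, and use injectivity of $f$ to see the images stay disjoint. The only (trivial) case your write-up glosses over is $A=\emptyset$, which is paradoxical by convention and whose image is again empty, hence paradoxical.
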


\begin{proof} Assume that $A$ is a non-empty paradoxical subset of $\Gamma$. Then there are piecewise translations $\sigma^\pm \colon A \to A$ with disjoint images.  Put $B = f(A)$, and let $\tau^\pm \colon B \to B$ be given by $\tau^\pm \circ f = f \circ \sigma^\pm$. Then $\tau^\pm$ are piecewise translations (by Lemma~\ref{lm:piecewise_translation}) and $\tau^+(B) \cap \tau^-(B) = \emptyset$ (by injectivity of $f$). This shows that $B$ is paradoxical.
\end{proof}

\begin{proposition}\label{prop:supra-F_2}
A group $\Gamma$ is supramenable if and only if there is no injective Lipschitz map $f \colon \F_2 \to \Gamma$.
\end{proposition}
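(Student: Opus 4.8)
I would prove the two implications separately; the forward direction is immediate from the results already available, while the converse requires an explicit construction. For the forward direction, suppose $f\colon\F_2\to\Gamma$ is injective and Lipschitz. The free group $\F_2=\langle a,b\rangle$ is a paradoxical subset of itself: writing $W_c$ for the set of reduced words beginning with $c\in\{a,a^{-1},b,b^{-1}\}$, one has the partition $\F_2=W_a\sqcup aW_{a^{-1}}$, and translating the second piece by $a^{-1}$ gives $\F_2\sim_{\F_2}W_a\cup W_{a^{-1}}$; symmetrically $\F_2\sim_{\F_2}W_b\cup W_{b^{-1}}$, and the two sets $W_a\cup W_{a^{-1}}$ and $W_b\cup W_{b^{-1}}$ are disjoint. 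Hence $\F_2$ is paradoxical, so by Lemma~\ref{lm:Lip-paradoxical} the set $f(\F_2)$ is a non-empty paradoxical subset of $\Gamma$, and $\Gamma$ is not supramenable by Tarski's theorem.

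For the converse, assume $\Gamma$ is not supramenable and use Tarski's theorem to fix a non-empty paradoxical subset $A\subseteq\Gamma$. As recorded just before Lemma~\ref{lm:piecewise_translation}, there are injective piecewise translations $\tau_1,\dots,\tau_5\colon A\to A$ with pairwise disjoint images. Fix a bijection $s\mapsto j(s)$ from $\{a,a^{-1},b,b^{-1}\}$ onto $\{1,2,3,4\}$, choose a basepoint $x_0\in\tau_5(A)$ — so that $x_0\notin\tau_j(A)$ for every $j\le 4$ — and define $f\colon\F_2\to A$ by recursion on word length: $f(e)=x_0$, and
\[
f(sw')=\tau_{j(s)}\bigl(f(w')\bigr)
\]
whenever $sw'$ is a reduced word with first letter $s\in\{a,a^{-1},b,b^{-1}\}$ and tail $w'$. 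Every non-trivial element of $\F_2$ has a unique such expression, so $f$ is well defined, and clearly $f(w)\in A$ for all $w$.

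Injectivity follows by induction on $|w|+|v|$: if $f(w)=f(v)$ and one of $w,v$ is trivial then, since $f$ maps every non-trivial word into $\bigcup_{j\le4}\tau_j(A)$ whereas $x_0$ avoids this union, both are trivial; otherwise write $w=sw'$, $v=tv'$, note that $f(w)\in\tau_{j(s)}(A)$ and $f(v)\in\tau_{j(t)}(A)$, so disjointness of the images forces $j(s)=j(t)$ and hence $s=t$, and then injectivity of $\tau_{j(s)}$ with the inductive hypothesis gives $w'=v'$. For the Lipschitz property it suffices, as $\F_2$ is finitely generated, to verify \eqref{eq:L1} for the symmetric generating set $S=\{a,a^{-1},b,b^{-1}\}$, i.e.\ to bound $f(sy)f(y)^{-1}$ for $s\in S$, $y\in\F_2$. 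Choosing finite sets $S_j\subseteq\Gamma$ with $\tau_j(z)z^{-1}\in S_j$ for all $z\in A$: if the reduced word $y$ does not begin with $s^{-1}$ then $sy$ is reduced with first letter $s$, so $f(sy)=\tau_{j(s)}(f(y))$ and $f(sy)f(y)^{-1}\in S_{j(s)}$; if $y$ does begin with $s^{-1}$ then $y=s^{-1}(sy)$ with $sy$ reduced and not beginning with $s$, so $f(y)=\tau_{j(s^{-1})}(f(sy))$ and $f(sy)f(y)^{-1}\in S_{j(s^{-1})}^{-1}$. Thus \eqref{eq:L1} holds with $T=\bigcup_{j\le4}(S_j\cup S_j^{-1})$, completing the construction.

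The substance is entirely in the converse, and the one genuinely delicate point is that $f$ must be defined on the whole free \emph{group}, not merely on a free sub-semigroup. This is why a fifth piecewise translation is needed — to park the basepoint away from all the images $\tau_j(A)$, $j\le4$, which is what makes the identity vertex of the Cayley tree behave correctly for injectivity — and why the recursion should strip the \emph{first} letter of a reduced word rather than the last, so that passing to a neighbour in the right-invariant word metric underlying \eqref{eq:L1} alters $f$ only by a single application of some $\tau_j$ or of its partial inverse. With the set-up arranged this way, everything else is routine bookkeeping with reduced words.
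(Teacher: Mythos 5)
Your proof is correct and follows essentially the same route as the paper's: the forward direction via Lemma~\ref{lm:Lip-paradoxical} applied to the paradoxical set $\F_2$, and the converse by building $f$ recursively on reduced words from four piecewise translations of a paradoxical set $A$ with disjoint images, with a basepoint parked outside all four images (your fifth translation $\tau_5$ is just a way of producing the point $s_0$ the paper postulates directly). The injectivity and Lipschitz verifications match the paper's arguments.
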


\begin{proof} The ``only if'' part follows from Lemma~\ref{lm:Lip-paradoxical} and the fact that $\F_2$ is non-amenable and hence paradoxical. 

Suppose that $\Gamma$ is not supramenable and let $A$ be a non-empty paradoxical subset of $\Gamma$. Then we can find four piecewise translations $\sigma^\pm, \tau^\pm \colon A \to A$ with disjoint images such that there exists $s_0 \in A$ not in the image of any of these four piecewise translations. 

Let $a,b$ denote the generators of $\F_2$ and define a map $f  \colon \F_2 \to A$ by induction on the word length $\ell$ as follows. We first take care of length zero by setting $f(e) = s_0$.  If $x \in \F_2$ has length $\ell(x)\ge 1$, and if $x = a^{\pm 1}x'  \in  \F_2$ is a reduced word, then put $f(x) = \sigma^\pm \circ f(x')$. Similarly, if $x = b^{\pm 1}x' \in  \F_2$ is a reduced word, then let $f(x) =  \tau^\pm \circ f(x')$. 

We prove by contradiction that $f$ is injective. Indeed, if not, we can choose two reduced words  $x\neq y$ such that $f(x) = f(y)$ and with $\ell (x)$ minimal for these conditions. Because $\sigma^\pm$ and $\tau^\pm$ have disjoint images (not containing $s_0$), there exists $c\in\{a,a^{-1},b,b^{-1}\}$ such that $x=cx'$ and $y=cy'$ are reduced words. But then $f(x') = f(y')$, contradicting the minimality of $\ell(x)$. Therefore $f$ is injective.

Let us finally show that $f$ is Lipschitz. By the remarks below Definition~\ref{def:Lipschitz} we need only show that there is a finite set $T \subseteq \Gamma$ such that \eqref{eq:L1} holds with $S =  \{a,a^{-1},b,b^{-1}\}$. This is done by first choosing
$$T' = \left\{\sigma^\pm(x)x^{-1} \mid x \in A\right\}\cup\left\{\tau^\pm(x)x^{-1} \mid x \in A\right\}.$$
If $x,y \in \F_2$ and $xy^{-1} \in S$, then either $x = cy$ or $y = cx$ for some $c \in S$ and such that $cy$, respectively, $cx$, is reduced. In the former case $f(x) = \sigma^\pm\circ f(y)$ or $f(x) = \tau^\pm\circ f(y)$, hence $f(x)f(y)^{-1} \in T'$. In the latter case $f(y)f(x)^{-1}\in T'$. Finally we can choose $T=T'\cup T'^{-1}$.
\end{proof}

\noindent
Benjamini and Schramm proved in~\cite{BenSchramm:GAFA} that there is an injective quasi-isometric embedding from $\F_2$ into any non-amenable group $\Gamma$. This gives us the following sequence of inclusions:
\begin{eqnarray*}
& &\{\text{groups of sub-exponential growth}\} \\
&\subseteq & \{\text{supramenable groups}\} \\
&=& \{\text{groups with no injective Lipschitz inclusions of} \; \mathbb{F}_2\} \\
&\subseteq & \{\text{groups with no quasi-isometric embedding of} \; \mathbb{F}_2\} \\
&\subset & \{\text{amenable groups}\}
\end{eqnarray*}
De Cornulier and Tessera showed in~\cite[5.1]{Cornulier-Tessera_quasi} that any finitely generated solvable group with exponential growth contains a quasi-isometrically embedded free sub-semigroup on two generators. Solvable groups are amenable and the free group $\F_2$ embeds quasi-isometrically in the free semi-group on two generators. Therefore the last inclusion is strict and for solvable groups the first four sets are equal.

We conclude this section by using the theorem of Benjamini and Schramm mentioned above to show that each non-amenable group contains \emph{small} non-empty paradoxical subsets. 

\begin{definition} \label{def:small}
Let $A$ be a subset of a group $\Gamma$. 
\begin{enumerate}
\item Write $A \ll \Gamma$ if $A \precsim_\Gamma \Gamma \setminus B$ for all $A$-bounded subsets $B$ of $\Gamma$.
\item Say that $A$ is \emph{absorbing} if $\bigcap_{t \in S} tA \ne \emptyset$ for all finite sets $S\subseteq \Gamma$.
\item Write $A \ll^* \Gamma$ if all $A$-bounded subsets of $\Gamma$ are non-absorbing.
\end{enumerate}
\end{definition}

\noindent
For example, if $A$ is a subgroup of $\Gamma$, then $A \ll \Gamma$ if and only if $|\Gamma :A| = \infty$. A subset of $\Gamma = \Z$ is absorbing if and only if it contains arbitrarily long consecutive sequences (compare~\ref{pt:absorbing} below for the non-commutative case).

We list some elementary properties of the relations defined above.

\begin{lemma} \label{lem:small}
Let $\Gamma$ be a group and let $A,B$ be subsets of $\Gamma$.
\begin{enumerate}
\item If $B \propto A$ and $A \ll \Gamma$, then $B \ll \Gamma$.
\item If $A \ll \Gamma$, then  $A \ll^* \Gamma$.
\item $A$ is absorbing if and only if for every finite subset $F\subseteq \Gamma$ there is $g\in \Gamma$ with $F g\subseteq A$.\label{pt:absorbing}
\item Suppose $A$ absorbing. Then $\Gamma$ is amenable if and only if there is a left-invariant mean on $\Gamma$ supported on $A$.
\end{enumerate}
\end{lemma}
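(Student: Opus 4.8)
The plan is to prove the four parts of Lemma~\ref{lem:small} in the order they are stated, since each is elementary and (ii), (iv) will lean on earlier parts. Throughout I will freely use the unwinding of definitions: $B \propto_\Gamma A$ means $B \subseteq \bigcup_{t\in F} tA$ for a finite $F$, and $A \ll \Gamma$ means $A \precsim_\Gamma \Gamma\setminus B$ for every $A$-bounded $B$.

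\emph{Part (i).} Suppose $B \propto_\Gamma A$ and $A \ll \Gamma$. I need $B \precsim_\Gamma \Gamma\setminus C$ for an arbitrary $B$-bounded $C$. The key observation is that $\propto_\Gamma$ is transitive, so any $B$-bounded set $C$ is also $A$-bounded; hence $A \precsim_\Gamma \Gamma\setminus C$ by hypothesis. Since $\precsim_\Gamma$ is transitive and $B \precsim_\Gamma A$ is immediate from $B \propto_\Gamma A$ (a $B$-bounded translate-cover of $B$ by copies of $A$ lets one carve $B$ into finitely many pieces each a translate of a subset of $A$, so $B \precsim_\Gamma A$), we conclude $B \precsim_\Gamma \Gamma\setminus C$. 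I should double-check the harmless point that $B\precsim_\Gamma A$ really follows from $B\subseteq\bigcup_{t\in F}tA$: write $B=\bigsqcup_{t\in F}(B\cap tA\setminus\bigcup_{s\prec t}sA)$ for a fixed ordering of $F$; each piece is a subset of $tA$, so $t^{-1}$ times it is a subset of $A$, giving the required finite partition.

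\emph{Part (ii).} Assume $A \ll \Gamma$; I want $A \ll^* \Gamma$, i.e.\ every $A$-bounded $B$ is non-absorbing. Fix an $A$-bounded $B$. Then $A \precsim_\Gamma \Gamma \setminus B$, in particular $\Gamma\setminus B \neq \emptyset$ (as $A\neq\emptyset$ — here I implicitly assume $A$ nonempty, which is the relevant case; if $A=\emptyset$ the statement is vacuous or trivial). Actually I need more: that $B$ fails the absorption condition of Definition~\ref{def:small}(ii). It is cleaner to go through part (iii) first, so I would prove (iii) before (ii). Given (iii): $B$ absorbing would mean for every finite $F$ there is $g$ with $Fg\subseteq B$. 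But since $B\propto_\Gamma A$ and hence (by part (i)) $B\ll\Gamma$, which gives $B\precsim_\Gamma\Gamma\setminus B$; combined with $B$ absorbing one derives a contradiction. Concretely, absorbing forces $B$ to be ``large'': if $B\precsim_\Gamma\Gamma\setminus B$ then there is a piecewise translation $\Gamma\setminus B \supseteq \sigma(B)\sim_\Gamma B$ witnessed by a finite set $S$; absorption applied to $S^{-1}$ (or a suitable finite set built from $S$) produces an element forced to lie in both $B$ and its complement. I will need to lay this out carefully — this is the one spot where the argument is not a one-liner.

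\emph{Part (iii).} This is a pure reindexing: $\bigcap_{t\in S}tA\neq\emptyset$ means there is $x$ with $t^{-1}x\in A$ for all $t\in S$, i.e.\ $S^{-1}x\subseteq A$; replacing $S$ by $F:=S^{-1}$ (which ranges over all finite sets as $S$ does) and $g:=x$ gives exactly $Fg\subseteq A$. So the two formulations are literally equivalent by the substitution $F=S^{-1}$, $g=x$. This is the easy part and I'd dispatch it in two lines.

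\emph{Part (iv).} Assume $A$ absorbing. The ``if'' direction is trivial: a left-invariant mean supported on $A$ is in particular a left-invariant mean on $\Gamma$, so $\Gamma$ is amenable. For ``only if'', suppose $\Gamma$ is amenable with left-invariant mean $m$ on $\ell^\infty(\Gamma)$; I want to produce one supported on $A$ (meaning $m'(f)=0$ whenever $\supp f\cap A=\emptyset$, equivalently $m'(1_A)=1$). The natural move: use absorption to find, for each finite $F\subseteq\Gamma$, an element $g_F$ with $Fg_F\subseteq A$ (part (iii)); then $1_{Ag_F^{-1}}\ge 1_F$ pointwise... rather, translate $m$ by $g_F$ and take a weak-* cluster point over the net of finite sets $F$ directed by inclusion. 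Precisely, set $m_F(f)=m(f(\cdot\, g_F))$ — wait, I want to arrange $m_F(1_A)$ close to $1$. Since $Fg_F\subseteq A$, we have $g_F^{-1}F^{-1}\subseteq A^{-1}$... I'll instead use right translation: define $m_F = g_F\cdot m$ via $(g_F\cdot m)(f) = m(g_F^{-1}\cdot f)$ won't change $m(1_A)$ because $m$ is left-invariant, so I must use the \emph{right} action. Let $m_F(f) := m(R_{g_F}f)$ where $(R_{g_F}f)(x)=f(xg_F)$; this is still a left-invariant mean (left and right translations commute). Now $m_F(1_A) = m(1_{Ag_F^{-1}})$ and $Ag_F^{-1}\supseteq F$, so $m_F(1_A)\ge m(1_F)$; but $m(1_F)$ need not be close to $1$ for finite $F$ (indeed it's often $0$). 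So this exact approach needs a fix: one should instead build $A$-support from the fact that $A$ contains translates of \emph{every} finite set, and use that a mean supported on an ultrafilter-type limit works. The clean argument: consider the set $\mathcal{M}_A$ of left-invariant means $m'$ with $m'(1_A)=1$; it is weak-* closed and I must show it is nonempty. Equivalently, by a standard Hahn--Banach/Day argument, it suffices that $\inf_{\mu} \sup_{x} \big((1 - 1_A) * \check\mu\big)(x) $... I think the cleanest path is: pick a net $(g_F)$ as above; for the principal ultrafilter-free approach, take any left-invariant mean $m$ and a weak-* cluster point $m_\infty$ of the net $(R_{g_F}^* m)_F$; then for each fixed finite $F_0$, eventually $F_0\subseteq A g_F^{-1}$ hmm — the point I actually want is $m_\infty(1_{\Gamma\setminus A})=0$, and $(\Gamma\setminus A)g_F^{-1}$ is disjoint from $F$, but again finiteness of $F$ is too weak. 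I expect \textbf{this to be the main obstacle}: making the limiting argument actually yield a mean concentrated on $A$ rather than merely on a set meeting every translate-of-a-finite-set. The resolution is presumably to use that absorption gives, for the finite set $F = \{x_1,\dots,x_n\}$, an element $g$ with $x_i g\in A$ for all $i$, and to run the cluster-point argument along the net indexed by finite subsets of $\Gamma$ ordered by inclusion, checking that any weak-* limit $m_\infty$ satisfies $m_\infty(1_A) = 1$ because for each finitely supported probability measure / each fixed $\ep>0$ the relevant translate lands in $A$; I will assemble this standard averaging argument carefully, using that means correspond to finitely additive probability measures and that the net $R_{g_F}^*m$ is a net in the weak-* compact set of left-invariant means. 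The ``only if'' of (iv) is thus the sole genuinely non-formal step; all others are definition-chasing modulo the small comparison argument in (ii).
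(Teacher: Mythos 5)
Your part (iii) is correct and matches the paper, and the route you sketch for (ii) (reduce to (i), then derive a contradiction between $B$ absorbing and $B\precsim_\Gamma\Gamma\setminus B$ by locating an element $\sigma(x)=sx$ forced into both $B$ and its complement) is essentially the paper's argument and does go through once written out. But there are two genuine gaps. In (i), the claim that ``$B\precsim_\Gamma A$ is immediate from $B\propto_\Gamma A$'' is false: covering $B$ by $n$ translates of $A$ only shows that $B$ embeds into $n$ \emph{disjoint} copies of $A$, not into $A$ itself. Take $A=\{a\}$ and $B=\{a,ta\}$ with $ta\ne a$: then $B\propto_\Gamma A$, but $\sim_\Gamma$ preserves cardinality, so $B\not\precsim_\Gamma A$; in your decomposition $B=\bigsqcup_t\bigl(B\cap tA\setminus\bigcup_{s\prec t}sA\bigr)$ the translated pieces $t^{-1}(\cdot)$ overlap inside $A$. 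This is precisely why the paper's proof of (i) is not a transitivity argument: writing $B\subseteq\bigcup_{s\in S}sA$ with $|S|=n$, it invokes $A\ll\Gamma$ \emph{repeatedly} --- a piecewise translation $\sigma_1\colon A\to\Gamma\setminus C$, then, since $\sigma_1(A)\cup C$ is still $A$-bounded, a second one avoiding $\sigma_1(A)\cup C$, and so on --- to obtain $n$ piecewise translations of $A$ into $\Gamma\setminus C$ with disjoint images, which are then assembled into a single piecewise translation of $\bigcup_{s\in S}sA$, hence of $B$, into $\Gamma\setminus C$. Without this iteration your argument cannot be completed.

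In (iv), the obstacle you flagged is fatal for the net you ultimately write down. Right-translating a \emph{fixed} left-invariant mean $m$ cannot increase its mass on $A$: for $\Gamma=\Z$ and $A=\bigcup_n[n!,\,n!+n]$ (which is absorbing but has lower Banach density $0$), right and left translation coincide, so $R_g^*m=m$ for every $g$, and one may choose $m$ with $m(1_A)=0$; every cluster point of your net $R_{g_F}^*m$ then still annihilates $1_A$. The missing idea --- which is the entirety of the paper's proof of (iv) --- is to start not from an invariant mean but from a left F\o lner net $(U_i)$ of finite sets: the left F\o lner property is preserved under right translation ($|tU_ig\,\triangle\, U_ig|=|tU_i\,\triangle\, U_i|$), part (iii) supplies $g_i$ with $U_ig_i\subseteq A$, and any weak-* cluster point of the normalized indicator measures of the sets $U_ig_i$ is a left-invariant mean giving full mass to $A$. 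With that substitution (and the repair of (i)), the rest of your outline is sound.
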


\begin{proof} (i). As $B \propto A$, i.e., $B$ is $A$-bounded, we have $B \subseteq \bigcup_{s \in S} sA$ for some finite $S \subseteq \Gamma$. Let $n$ be the number of elements in $S$.

Let $C$ be a $B$-bounded subset of $\Gamma$. Then $C$ is also $A$-bounded.
By the hypothesis on $A$ there exists a piecewise translation $\sigma_1 \colon A \to \Gamma \setminus C$. Since $(\sigma_1(A) \cup C) \propto A$ we can repeat the process and obtain piecewise translations $\sigma_1, \dots, \sigma_n \colon A \to \Gamma \setminus C$ with disjoint images. These piecewise translations can be assembled to one piecewise translation $\sigma \colon \bigcup_{s \in S} sA \to \Gamma \setminus C$. The restriction of $\sigma$ to $B$ will then witness the relation $B \precsim_\Gamma \Gamma \setminus C$.

(ii). Note first that if $E \subseteq \Gamma$ is such that $E \ll \Gamma$, then $E$ cannot be absorbing. Indeed, there is a piecewise translation $\sigma \colon E \to \Gamma \setminus E$, and $\sigma(E) \subseteq \bigcup_{s \in S} sE$ for some finite $S \subseteq \Gamma$. Put $T = S \cup \{e\}$. Then $\bigcap_{t \in T} tE = \emptyset$. 

Suppose that $A \ll^* \Gamma$ does not hold. Then there exists an absorbing set $B \subseteq \Gamma$ such that $B \propto A$. Hence $B \ll \Gamma$ does not hold, which by (i) implies that $A \ll \Gamma$ does not hold.

(iii). It suffices to observe that for any $F\subseteq \Gamma$ we have
$$\{g\in\Gamma : Fg\subseteq A\} \ =\ \bigcap_{t\in F^{-1}} t A.$$

(iv). This follows from the fact that left F\o lner sets can be arbitrarily translated by right multiplication.
\end{proof}

\noindent One can show that $A \ll^* \Gamma$ does not imply $A \ll \Gamma$ (there are counterexamples with $\Gamma = \Z$).  We now turn to the existence of interesting small sets in groups.

\begin{lemma} \label{lm:smallA}
Let $\Gamma$ and $\Lambda$ be groups and let $f \colon \Gamma \to \Lambda$ be an injective quasi-isometric embedding. Then $f(A) \ll \Lambda$ whenever $A \ll \Gamma$.

In particular, each non-amenable group $\Gamma$ contains a non-empty paradoxical subset $A$ with $A \ll \Gamma$. 
\end{lemma}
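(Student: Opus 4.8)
The plan is to handle the two assertions separately, the embedding statement being the crux and the ``in particular'' clause a short deduction from it together with the Benjamini--Schramm theorem quoted above.

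For the first assertion, fix $A \ll \Gamma$ and let $D$ be an arbitrary $f(A)$-bounded subset of $\Lambda$. By the discussion of piecewise translations preceding Lemma~\ref{lm:piecewise_translation}, in order to show $f(A) \precsim_\Lambda \Lambda \setminus D$ — and hence, $D$ being arbitrary, $f(A) \ll \Lambda$ — it suffices to exhibit a piecewise translation $f(A) \to \Lambda \setminus D$. The first step is to pull $D$ back to $\Gamma$: writing $D \subseteq \bigcup_{t \in T} t\,f(A)$ for a finite $T \subseteq \Lambda$, I claim that $B := f^{-1}(D)$ is $A$-bounded in $\Gamma$. Indeed, if $x \in B$ then $f(x) = t\,f(a)$ for some $t \in T$ and $a \in A$, so $f(x)f(a)^{-1} \in T$; since $f$ is a quasi-isometric embedding, condition~\eqref{eq:L2} applied to the finite set $T$ supplies a finite $S \subseteq \Gamma$ with $xa^{-1} \in S$, whence $B \subseteq \bigcup_{s \in S} sA$. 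Now $A \ll \Gamma$ yields a piecewise translation $\sigma \colon A \to \Gamma \setminus B$, and Lemma~\ref{lm:piecewise_translation}, applied to the injective Lipschitz map $f$ and to $\sigma$, produces a piecewise translation $\tau \colon f(A) \to \Lambda$ with $\tau \circ f = f \circ \sigma$. For $a \in A$ one has $\tau(f(a)) = f(\sigma(a))$ with $\sigma(a) \notin B = f^{-1}(D)$, so $\tau(f(a)) \notin D$; thus $\tau$ maps $f(A)$ into $\Lambda \setminus D$, which is what was needed.

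For the ``in particular'' statement, let $\Gamma$ be non-amenable and use the Benjamini--Schramm theorem to fix an injective quasi-isometric embedding $f \colon \F_2 \to \Gamma$. Then choose inside $\F_2$ a non-empty paradoxical subset $A$ with $A \ll \F_2$; concretely one may take $A = [\F_2,\F_2]$, which is a free group of infinite rank — in particular non-amenable, hence paradoxical — and of infinite index in $\F_2$, so that $A \ll \F_2$ by the remark following Definition~\ref{def:small}. Then $f(A)$ is non-empty, it is paradoxical in $\Gamma$ by Lemma~\ref{lm:Lip-paradoxical}, and $f(A) \ll \Gamma$ by the first assertion.

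The one place demanding care is the pull-back step: it genuinely uses the quasi-isometry hypothesis via~\eqref{eq:L2}, since mere injectivity together with the Lipschitz property would not force the preimage of an $f(A)$-bounded set to be $A$-bounded. This is precisely why the lemma assumes a quasi-isometric embedding rather than just an injective Lipschitz map, and it also explains why the companion statement about paradoxicality (Lemma~\ref{lm:Lip-paradoxical}) needs only the weaker hypothesis.
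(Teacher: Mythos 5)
Your proof is correct and follows essentially the same route as the paper's: both hinge on condition \eqref{eq:L2} together with Lemma~\ref{lm:piecewise_translation} to transport a piecewise translation $\sigma\colon A\to\Gamma\setminus B$ to one on $f(A)$, the only cosmetic difference being that you pull the $f(A)$-bounded set $D$ back to $B=f^{-1}(D)\subseteq\Gamma$ while the paper pushes forward and checks directly that the image avoids $\bigcup_{t\in T}t\,f(A)$. The ``in particular'' clause is likewise handled exactly as in the paper (Benjamini--Schramm plus a non-amenable infinite-index subgroup of $\F_2$, of which $[\F_2,\F_2]$ is a valid concrete instance).
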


\begin{proof} Suppose that $A \ll \Gamma$. Let $T$ be a finite subset of $\Lambda$ and find a finite set $S \subseteq \Gamma$ such that 
\eqref{eq:L2} holds. There is a piecewise translation $\sigma \colon A \to \Gamma \setminus \bigcup_{s \in S} sA$. This implies that $\sigma(x)y^{-1} \notin S$ for all $x,y \in A$. Let $\tau \colon f(A) \to \Lambda$  be defined by $\tau \circ f = f \circ \sigma$. Then $\tau$ is a piecewise translation by Lemma~\ref{lm:piecewise_translation}. Moreover,
$$\tau(f(x))f(y)^{-1} = f(\sigma(x)) f(y)^{-1} \notin T$$
for all $x,y \in A$. Hence $\tau$ maps $f(A)$ into $\Lambda \setminus \bigcup_{t \in T} t f(A)$. This proves that $f(A) \ll \Lambda$.

If $\Gamma$ is non-amenable then, as noted above, by the theorem of Benjamini and Schramm in ~\cite{BenSchramm:GAFA} there is an injective quasi-isometric embedding $f \colon \F_2 \to \Gamma$. The free group $\F_2$ contains (several)  non-amenable subgroups $H$ of infinite index. For any such subgroup $H$ we can put $A = f(H)$. Then $A \ll \Gamma$ by the first part of the lemma, and $A$ is paradoxical by Lemma~\ref{lm:Lip-paradoxical}.
\end{proof}

\noindent In general, if we are not asking that $A$ in the lemma above is paradoxical, we have the following:

\begin{lemma} \label{lm:smallA2}
Each infinite group $\Gamma$ contains an infinite subset $A$ such that $A \ll \Gamma$.
\end{lemma}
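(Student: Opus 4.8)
The statement claims that every infinite group $\Gamma$ contains an infinite subset $A$ with $A \ll \Gamma$, i.e., $A \precsim_\Gamma \Gamma \setminus B$ for every $A$-bounded $B \subseteq \Gamma$. The plan is to split into two cases according to whether $\Gamma$ is amenable or not. If $\Gamma$ is non-amenable, then Lemma~\ref{lm:smallA} already produces such a set (indeed a paradoxical one), so nothing more is needed. Hence the work is entirely in the amenable case, and there I would look for a subset that is ``thin'' in a strong combinatorial sense: an infinite set $A$ such that, for every finite $S \subseteq \Gamma$, the enlargement $\bigcup_{s\in S} sA$ omits a translate of $A$, so that $A$ can be pushed off it by a piecewise translation.

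\textbf{Main construction (amenable case).} I would build $A = \{a_1, a_2, a_3, \dots\}$ greedily, choosing the $a_n$ so fast-growing that the set is genuinely sparse. Concretely, list the group as $\Gamma = \{g_1, g_2, \dots\}$ (countable case; the general case reduces to a countable subgroup since $A \ll H$ with $|\Gamma:H|=\infty$ would give $A \ll \Gamma$ via Lemma~\ref{lem:small}(i), and one can always pass to a countable subgroup — if it has finite index, $\Gamma$ is still countable) and at stage $n$ pick $a_n$ avoiding the finitely many ``forbidden'' elements coming from the finite sets $S_k = \{g_1,\dots,g_k\}$ for $k \le n$ and the previously chosen $a_1,\dots,a_{n-1}$: I want the products $s a_i a_j^{-1} s'^{-1}$ to stay controlled so that translates $t A$ (for $t$ in a fixed finite set built from the $S_k$) meet $A \cup \bigcup_{s} sA$ only in an initial segment. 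The key point is to choose $a_n$ so that $a_n \notin \bigcup_{s,s' \in S_n} s\, \{a_1,\dots,a_{n-1}\}\, s'$, which is a finite set, so such a choice is always possible in an infinite group. Then given a finite $S$, writing $S \subseteq S_N$, the tail $\{a_n : n > N\}$ of $A$ maps into $\Gamma \setminus \bigcup_{s\in S} sA$ by a piecewise translation (shift the index), while the finite initial part $\{a_1,\dots,a_N\}$ is handled separately — here one uses that any finite set is $\precsim_\Gamma$ any infinite set, so it can be absorbed into the tail of $\Gamma \setminus \bigcup_{s\in S} sA$ after possibly enlarging $S$ slightly; more carefully, one shows directly $A \precsim_\Gamma \Gamma \setminus B$ for every $A$-bounded $B$ by combining a shift on the tail with a finite matching.

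\textbf{Main obstacle.} The delicate point is bookkeeping: making the greedy choice of $a_n$ simultaneously defeat \emph{all} finite sets $S$, including those involving generators not yet enumerated, while only finitely many constraints are active at each stage. The resolution is the standard diagonal argument — at stage $n$ only $S_n$ matters, and every finite $S$ eventually lies inside some $S_N$, so its constraints become (and remain) active from stage $N$ on; growth of the forbidden sets is finite at each stage, so a valid $a_n$ always exists precisely because $\Gamma$ is infinite. A secondary subtlety is the reduction to the countable case and reconciling it with Lemma~\ref{lem:small}(i): if $\Gamma$ has a countable subgroup of infinite index we are done via $\ll$ upward-closure under $\propto$... actually one should simply observe that it suffices to find a countably infinite subgroup $H \le \Gamma$ and an infinite $A \subseteq H$ with $A \ll H$ in the sense internal to $H$ — but one needs $A \ll \Gamma$, not merely $A \ll H$, so the cleanest route is to run the construction inside $\Gamma$ itself using any fixed enumeration of a countable infinite subset that generates a countable subgroup, enumerating that subgroup for the $S_k$'s; since piecewise translations witnessing $A \precsim_\Gamma \Gamma\setminus B$ may use all of $\Gamma$, no index-of-subgroup hypothesis is needed, and the finitely-many-constraints-per-stage feature survives.
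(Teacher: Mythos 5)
Your greedy ``sparse set'' construction is in the right spirit -- the paper's own proof also builds $A=\{x_1,x_2,\dots\}$ by forbidding finitely many products at each stage -- but the mechanism you propose for witnessing $A \precsim_\Gamma \Gamma\setminus\bigcup_{s\in S}sA$ does not work. You send the tail $\{a_n : n>N\}$ into the complement by ``shifting the index''. An index shift $a_n\mapsto a_{n+k}$ is \emph{not} a piecewise translation: it would require the elements $a_{n+k}a_n^{-1}$ to lie in a fixed finite set, and your own greedy condition $a_n\notin\bigcup_{s,s'\in S_n}s\{a_1,\dots,a_{n-1}\}s'$ actively destroys this (once $c$ and $e$ have entered $S_n$, the relation $a_{n+k}=c\,a_n$ is forbidden for all later $n$). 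Since $\precsim_\Gamma$ is defined via finitely many translations, the index shift cannot witness it; moreover, even set-theoretically, a shift maps $A$ into $A\subseteq\bigcup_{s\in S}sA$ when $e\in S$, not into the complement. The correct move, which is what the paper does, is to translate all of $A$ (minus a finite exceptional set $H$) by a \emph{single} element $s\notin F$: this is trivially a piecewise translation, and one arranges the sparseness so that $sA$ meets each $tA$ ($t\in F$) in only finitely many points, i.e.\ $|cA\cap A|$ is finite (the paper gets $\le 2$) for every $c\neq e$. The finite leftover $H$ is then matched point-by-point into the complement after checking that the complement is infinite -- your remark that a finite set is subordinate to any infinite set is the right idea here, but you still owe the infiniteness of the complement.

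A secondary problem is your sparseness condition itself. The paper forbids $x_n\in\{x_kx_\ell^{-1}x_m : k,\ell,m<n\}$, a condition intrinsic to the $x_i$'s; this yields $|cA\cap A|\le 2$ for \emph{every} $c\in\Gamma\setminus\{e\}$, which is what the argument needs since the finite set $F$ in the definition of $A$-boundedness is an arbitrary finite subset of $\Gamma$. Your condition only forbids products with coefficients in the enumerated sets $S_n$, which exhaust at most a countable subgroup; for uncountable $\Gamma$ there is then no control over $cA\cap A$ for $c$ outside that subgroup, so the reduction you sketch does not close. (Also, the amenable/non-amenable case split is unnecessary: the paper's single construction works for every infinite group, and invoking Lemma~\ref{lm:smallA} needlessly imports the Benjamini--Schramm theorem.) The construction can be repaired by replacing your forbidden set with one closed under the inverses that actually arise -- which is essentially how one rediscovers the paper's condition $x_kx_\ell^{-1}x_m$ -- and by replacing the index shift with left multiplication by a single $s\notin F$.
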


\begin{proof} Choose a sequence $x_1,x_2, x_3, \dots$ of elements in $\Gamma$ such that
\begin{equation} \label{eq:seq}
x_n \notin \{x_k x_\ell^{-1} x_m \mid 1 \le k,\ell,m < n\}
\end{equation}
for all $n \ge 2$, and put $A = \{x_1,x_2,x_3, \dots \}$. We claim that $|sA \cap A| \le 2$ for all $s \in \Gamma \setminus \{e\}$. Indeed, if $sA \cap A \ne \emptyset$ and $s \ne e$, then $sx_n = x_m$ for some $n \ne m$. Suppose that $k, \ell \in \N$ is another pair such that $sx_k = x_\ell$. Suppose also that $k > \ell$. Then $x_k = s^{-1}x_\ell = x_nx_m^{-1}x_\ell$, whence $k \le \max\{n,m,\ell\}$ by \eqref{eq:seq}, i.e. $k \le \max\{n,m\}$. Similarly, if $\ell > k$, then $x_\ell = x_mx_n^{-1}x_k$, which entails that $\ell \le \max\{n,m\}$. Now, interchanging the roles of the pairs $(n,m)$ and $(k,\ell)$, we conclude that $\max\{n,m\} = \max\{k,\ell\}$ whenever $sx_n = x_m$ and $sx_k = x_\ell$. It follows easily from this observation that $|sA \cap A| \le 2$.

Let $F \subset \Gamma$ be finite, choose $s \in \Gamma \setminus F$, and put
$$H = A \cap \bigcup_{t \in F} s^{-1} tA = \bigcup_{t \in F} (A \cap s^{-1}tA).$$ Then $H$ is finite and
\begin{equation} \label{eq:A-H}
A \setminus H \sim_\Gamma s(A \setminus H) \subseteq \Gamma \setminus \bigcup_{t \in F} tA.
\end{equation}
To  show that  $A \precsim_\Gamma \Gamma \setminus \bigcup_{t \in F} tA$ it now suffices to show that 
$$H \precsim_\Gamma \Gamma \setminus \big(\bigcup_{t \in F} tA \cup sA\big) =  \Gamma \setminus \bigcup_{t \in F'} tA,$$
where $F' = F \cup \{s\}$. As $H$ is finite it suffices to show that $\big| \Gamma \setminus \bigcup_{t \in F'} tA\big| \ge |H|$. However, one can deduce from  \eqref{eq:A-H} that $\Gamma \setminus \bigcup_{t \in F} tA$ is infinite for \emph{any} finite set $F' \subset \Gamma$; and this completes the proof.
\end{proof}

\section{Structure of crossed product \Cs s} \label{sec:C*}

\noindent We review here some, mostly well-known, results about the structure of crossed product \Cs s; and we introduce the notion of \emph{purely infinite actions}. Recall first that a dynamical system $\Gamma \curvearrowright X$ is said to be \emph{regular} if the full and the reduced crossed product \Cs s coincide, i.e., if the natural epimorphism 
$$C_0(X) \rtimes_\full \Gamma \to C_0(X) \rtimes_\red \Gamma$$
is an isomorphism.
Anantharaman-Delaroche proved in~\cite{Ana:TAMS2002} that $\Gamma \curvearrowright X$ is regular if the action is \emph{amenable} (in the sense of Anantharaman-Delaroche,~\cite[Definition 2.1]{Ana:TAMS2002}). Matsumura proved later that the converse also holds when $X$ is compact and the group $\Gamma$ is exact,~\cite{Mat:regular}.

 Moreover, the action is amenable  if and only if the crossed product \Cs{} $C_0(X) \rtimes_\red \Gamma$ is nuclear. Archbold and Spielberg proved in~\cite{ArcSpi:free} that the full crossed product $C_0(X) \rtimes_\full \Gamma$ is simple if and only if the action $\Gamma \curvearrowright X$ is minimal, topologically free and regular. Combining this result of Archbold and Spielberg with the results of Anantharaman-Delaroche mentioned above, we get:

\begin{proposition}[Anantharaman-Delaroche, Archbold--Spielberg] \label{prop:simple}
Let $\Gamma$ be a countable group acting on a locally compact Hausdorff space $X$. Then $C_0(X) \rtimes_\red \Gamma$ is simple and nuclear if and only if  $\Gamma \curvearrowright X$ is minimal, topologically free and amenable.
\end{proposition}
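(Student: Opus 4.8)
The plan is to deduce the statement by combining the results quoted immediately above, taking care about which implication of each is invoked. Throughout, abbreviate $A_{\full} = C_0(X)\rtimes_{\full}\Gamma$ and $A_{\red}=C_0(X)\rtimes_{\red}\Gamma$, with $q\colon A_{\full}\to A_{\red}$ the canonical surjection.

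For the ``if'' direction, assume $\Gamma\curvearrowright X$ is minimal, topologically free and amenable. By Anantharaman-Delaroche's theorem, amenability of the action yields two facts simultaneously: the system is regular, so $q$ is an isomorphism, and $A_{\red}$ is nuclear. Since the action is then minimal, topologically free and regular, the Archbold--Spielberg criterion gives that $A_{\full}$ is simple; transporting along $q$, the algebra $A_{\red}$ is simple, and it is nuclear, as wanted.

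For the ``only if'' direction, assume $A_{\red}$ is simple and nuclear. Nuclearity of $A_{\red}$ forces the action to be amenable, by the converse half of the Anantharaman-Delaroche characterisation; amenability then forces regularity, so again $q\colon A_{\full}\to A_{\red}$ is an isomorphism and hence $A_{\full}$ is simple. The ``only if'' part of the Archbold--Spielberg theorem now shows that $\Gamma\curvearrowright X$ is minimal, topologically free and regular. Together with the amenability already established, this gives all three required properties.

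I expect no analytic difficulty here; the only point requiring attention is bibliographic and technical, namely that the Archbold--Spielberg simplicity criterion and the Anantharaman-Delaroche amenability--nuclearity equivalence are to be used in the form valid for an action of a countable discrete group on a general, possibly non-compact and non-unital, locally compact Hausdorff space. This is precisely why the argument is routed through $A_{\full}$ and the notion of regularity rather than attempting to reason with $A_{\red}$ directly: the content is entirely in correctly chaining the implications ``amenable $\Rightarrow$ regular'', ``amenable $\Leftrightarrow$ $A_{\red}$ nuclear'', and ``$A_{\full}$ simple $\Leftrightarrow$ minimal $+$ topologically free $+$ regular''.
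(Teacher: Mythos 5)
Your proposal is correct and follows exactly the route the paper intends: the paper offers no separate proof of this proposition beyond the sentence ``Combining this result of Archbold and Spielberg with the results of Anantharaman-Delaroche mentioned above, we get,'' and your chaining of ``amenable $\Rightarrow$ regular,'' ``amenable $\Leftrightarrow$ nuclear,'' and the Archbold--Spielberg equivalence for the full crossed product is precisely that combination, carried out in both directions. Your closing remark about needing these results in the non-unital, locally compact setting is a fair and worthwhile caveat, but it does not change the argument.
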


\noindent Recall that a \emph{Kirchberg algebra} is a \Cs{} which is simple, purely infinite, nuclear and separable. 

The proposition below is essentially contained in~\cite[Theorem 5 and its proof]{LacaSpi:purelyinf} by Laca and Spielberg. To deduce simplicity of $C_0(X) \rtimes_\red \Gamma$ one can also use Archbold--Spielberg,~\cite{ArcSpi:free}. 
Recall that a projection $p$ in a \Cs{} $\cA$ is said to be \emph{properly infinite} if $p \oplus p \precsim p$, i.e., $p \oplus p$ is Murray--von Neumann equivalent to a subprojection of $p$. 

\begin{proposition}[Archbold--Spielberg, Laca--Spielberg] \label{prop:pi}
Let $\Gamma$ be a countable group acting minimally and topologically freely on a metrizable totally disconnected locally compact Hausdorff space $X$. Suppose further that each non-zero projection in $C_0(X)$ is properly infinite in $C_0(X) \rtimes_\red \Gamma$. Then $C_0(X) \rtimes_\red \Gamma$ is simple and purely infinite. 
\end{proposition}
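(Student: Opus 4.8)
The plan is to establish simplicity and pure infiniteness of $A:=C_0(X)\rtimes_\red\Gamma$ separately, exploiting throughout the canonical faithful conditional expectation $E\colon A\to C_0(X)$ and the canonical unitaries $u_t$ ($t\in\Gamma$) implementing the action. For simplicity: since the action is topologically free, Archbold--Spielberg~\cite{ArcSpi:free} guarantee that every non-zero closed ideal $J$ of $A$ has $J\cap C_0(X)\neq 0$. Now $J\cap C_0(X)$ is a $\Gamma$-invariant ideal of $C_0(X)$, hence equals $C_0(U)$ for an open $\Gamma$-invariant $U\subseteq X$; by minimality $U=X$, and since $C_0(X)$ contains an approximate unit for $A$ this forces $J=A$. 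Thus $A$ is simple, and it is not isomorphic to $\C$---not even finite-dimensional---because by hypothesis $C_0(X)$ contains a properly infinite, hence infinite, projection. By the standard description of purely infinite simple C*-algebras, it then suffices to show that every non-zero hereditary subalgebra of $A$ contains an infinite projection.

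\emph{The comparison step.} The crux is: for every non-zero $a\in A_+$ there is a non-zero projection $p\in C_0(X)$ with $p\precsim a$ in $A$; I would prove this by compressing $a$ against the indicator of a carefully chosen compact-open set. Put $\beta:=\|E(a)\|$, which is strictly positive since $E$ is faithful, and fix $0<\ep<\beta/3$. Choose a \emph{positive} element $a'=\sum_{t\in F}f_tu_t$ of the algebraic crossed product, with $F\subseteq\Gamma$ finite, $e\in F$ and $f_t\in C_c(X)$, such that $\|a-a'\|<\ep$; since $E$ is positive, $f_e=E(a')\ge 0$ and $\|f_e\|\ge\beta-\ep$. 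By topological freeness the set of points of $X$ fixed by no element of the finite set $F^{-1}F\setminus\{e\}$ is dense, so it meets the non-empty open set $\{x:f_e(x)>\beta-2\ep\}$; fix $x_1$ in the intersection. Using continuity of $f_e$ and of the action together with total disconnectedness of $X$, pick a compact-open neighbourhood $U$ of $x_1$ with $f_e>\beta-2\ep$ on $U$ and $U\cap g.U=\emptyset$ for all $g\in F^{-1}F\setminus\{e\}$. Set $p:=1_U\in C_0(X)$. Because $u_t1_U=1_{t.U}u_t$ and $1_U1_{t.U}=0$ for $t\in F\setminus\{e\}$, compression by $p$ annihilates every off-diagonal term:
\[
pa'p=\sum_{t\in F}1_U\,f_t\,1_{t.U}\,u_t=1_U f_e\ \ge\ (\beta-2\ep)\,p .
\]
Since $p(a-a')p$ is self-adjoint of norm less than $\ep$, this yields $pap\ge(\beta-3\ep)p>0$, whence $p\le(\beta-3\ep)^{-1}(pap)$ and therefore $p\precsim pap\precsim a$ in $A$.

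\emph{Conclusion.} Granting the comparison step, fix a non-zero $a\in A_+$ and a non-zero projection $p\in C_0(X)$ with $p\precsim a$. By a standard property of Cuntz subequivalence, a projection that is Cuntz-subequivalent to $a$ is Murray--von Neumann equivalent to a projection $q$ lying in $\overline{aAa}$. By hypothesis $p$ is properly infinite in $A$, and proper---hence ordinary---infiniteness is invariant under Murray--von Neumann equivalence, so $q$ is an infinite projection in the hereditary subalgebra $\overline{aAa}$. As $a$ was arbitrary, every non-zero hereditary subalgebra of $A$ contains an infinite projection; together with simplicity, this shows $A$ is purely infinite, as desired. (This is, in outline, the argument underlying~\cite{LacaSpi:purelyinf}.)

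\emph{Main obstacle.} I expect the comparison step to be the one genuinely non-trivial point: one must use topological freeness to produce a \emph{single} compact-open set $U$ whose $\Gamma$-translates by the relevant finitely many group elements are disjoint from $U$ while $f_e$ simultaneously stays near its maximum on $U$, so that $p=1_U$ both detects the norm of $E(a)$ and annihilates the off-diagonal part of $a$. Total disconnectedness enters precisely here, upgrading the positive function $1_Uf_e\in C_0(X)$ to an honest projection $p$, which is what lets us invoke the properly-infinite hypothesis. Everything else---the ideal-intersection argument for simplicity and the passage from ``infinite projections in all hereditary subalgebras'' to ``purely infinite''---is routine.
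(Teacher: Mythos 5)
Your proof is correct and follows exactly the route the paper itself indicates (it gives no written proof, only the citations): Archbold--Spielberg's intersection property plus minimality for simplicity, and the Laca--Spielberg compression argument — here adapted to produce a compact-open $U$ with $1_U\precsim a$, so that the properly-infinite hypothesis on projections of $C_0(X)$ yields infinite projections in every hereditary subalgebra. No gaps.
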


\noindent J.-L.\ Tu proved in~\cite{Tu:B-C-1} (see also~\cite{Tu:B-C-2}) 
that the \Cs{} associated with any amenable group\-oid belongs to the UCT class. The  crossed product $C_0(X) \rtimes_\red \Gamma$ can be identified with the \Cs{} of the groupoid $X \rtimes \Gamma$, which is amenable if  $\Gamma$ acts amenably on $X$. Hence $C_0(X) \rtimes_\red \Gamma$ belongs to the UCT class whenever $\Gamma$ acts amenably on $X$. Combining this deep theorem with Propositions~\ref{prop:simple} and~\ref{prop:pi} we get:

\begin{corollary} \label{cor:pi+simple}
Let $X$ be a metrizable totally disconnected locally compact Hausdorff space and let $\Gamma$ be a group acting on $X$. Then $C_0(X) \rtimes_\red \Gamma$ is a Kirchberg algebra in the UCT class if and only if the action of $\Gamma$ on $X$ is minimal, topologically free, amenable, and each non-zero projection in $C_0(X)$ is properly infinite in $C_0(X) \rtimes_\red \Gamma$.
\end{corollary}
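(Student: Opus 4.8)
\section*{Proof proposal for Corollary~\ref{cor:pi+simple}}

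The plan is to deduce the corollary purely by assembling the results already in hand: Proposition~\ref{prop:simple}, Proposition~\ref{prop:pi}, Tu's theorem on the UCT (recorded in the paragraph preceding the corollary), together with two standard facts — that in a simple purely infinite \Cs{} every non-zero projection is properly infinite, and that the reduced crossed product of a separable \Cs{} by a countable group is separable. Here $\Gamma$ is understood to be countable, as in the cited propositions, and I write $A = C_0(X)\rtimes_\red \Gamma$.

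For the ``if'' direction, suppose $\Gamma\curvearrowright X$ is minimal, topologically free and amenable, and that every non-zero projection of $C_0(X)$ is properly infinite in $A$. By Proposition~\ref{prop:simple}, $A$ is simple and nuclear; by Proposition~\ref{prop:pi} (whose standing hypotheses — metrizable, totally disconnected, locally compact Hausdorff $X$, minimal topologically free action — are all in force), $A$ is purely infinite; and since the transformation groupoid $X\rtimes\Gamma$ is amenable, Tu's theorem puts $A$ in the UCT class. The remaining point is separability. I would obtain it by recalling that a minimal action is automatically co-compact, so that $X=\bigcup_{t\in\Gamma} t.K$ is $\sigma$-compact; a $\sigma$-compact metrizable space is Lindel\"of and hence second countable, so $C_0(X)$ is separable, and therefore $A$ is separable because $\Gamma$ is countable. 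Thus $A$ is a Kirchberg algebra in the UCT class.

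For the ``only if'' direction, suppose $A$ is a Kirchberg algebra in the UCT class. Then $A$ is simple and nuclear, so Proposition~\ref{prop:simple} gives that $\Gamma\curvearrowright X$ is minimal, topologically free and amenable. Moreover $A$ is simple and purely infinite, and in any simple purely infinite \Cs{} every non-zero projection is properly infinite; applying this to the non-zero projections lying in the subalgebra $C_0(X)\subseteq A$ gives the last required property, completing the proof.

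There is no genuine obstacle here: the argument is a bookkeeping combination of the quoted statements. The only places that call for a little care are the verification of separability — which is exactly why one wants $X$ metrizable, the action co-compact (automatic from minimality), and $\Gamma$ countable — and the use, in the ``only if'' direction, of the characterisation of pure infiniteness in the simple case to transfer proper infiniteness down to the projections of $C_0(X)$.
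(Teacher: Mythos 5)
Your argument is correct and is essentially the paper's own: the corollary is obtained there by simply combining Proposition~\ref{prop:simple}, Proposition~\ref{prop:pi}, and Tu's theorem, exactly as you do. The extra details you supply (separability via co-compactness of minimal actions and countability of $\Gamma$, and the transfer of proper infiniteness to projections of $C_0(X)$ in the ``only if'' direction using that every non-zero projection in a simple purely infinite \Cs{} is properly infinite) are the right ones and are left implicit in the paper.
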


\noindent It would be very desirable to replace the last condition on the projections in $C_0(X)$ with  
purely dynamical conditions on  the $\Gamma$-space $X$. This leads us to the notion of \emph{purely infinite actions}, which we shall proceed to define.

Let $\Gamma$ be a group acting on a locally compact totally disconnected Hausdorff space $X$. Let $\K(X)$ (or just $\K$ if $X$ is understood) be the algebra of all compact-open subsets of $X$.  A set $K \in \K$ is said to be  \emph{$(X,\Gamma, \K)$-paradoxical} if there exist pairwise disjoint sets $K_1, K_2, \dots, K_{n+m} \in  \K$ and elements $t_1, t_2, \dots,  t_{n+m} \in \Gamma$ such that $K_j \subseteq K$ for all $j$ and
\begin{equation} \label{eq:E}
K = \bigcup_{j=1}^n t_j . K_j = \bigcup_{j=n+1}^{n+m} t_j . K_j.
\end{equation}
(We can think of this version of paradoxicality as a relative version of the classical notion of paradoxicality from the Hausdorff--Banach--Tarski paradox.)

\begin{definition} \label{def:pi-action}
An action of a group $\Gamma$ on a totally disconnected Hausdorff space $X$ is said to be \emph{purely infinite} if every compact-open subset of $X$ is $(X,\Gamma,\K)$-paradoxical (in the sense defined above).
\end{definition}

\noindent Let us for a moment turn to the \Cs{} point of view. Let us first fix some (standard) notations for  crossed product \Cs s. Let $\Gamma$ be a group acting on a \Cs{} $\cA$ (where $\cA$ for example could be $C_0(X)$ for some locally compact Hausdorff space $X$). Let the action of $\Gamma$ on $\cA$ be denoted by $t \mapsto \alpha_t \in \mathrm{Aut}(\cA)$, $t \in \Gamma$. If $\cA = C_0(X)$ and $\Gamma$ acts on $X$, then the action of $\Gamma$ on $\cA$ is given by $\alpha_t(f) = t.f$, where $(t.f)(x) = f(t^{-1}.x)$ for $f \in C_0(X)$ and $x \in X$. Let $u_t$, $t \in \Gamma$, denote the unitary elements in (the multiplier algebra of) $\cA \rtimes_\red \Gamma$ that implement the action of $\Gamma$ on $\cA$, i.e., $\alpha_t(a) = u_t a u_t^*$ for $a \in \cA$ and $t \in \Gamma$. The set of finite sums of the form $\sum_{t \in \Gamma} a_t u_t$, where $a_t \in \cA$ and only finitely many $a_t$ are non-zero, forms a uniformly dense $^*$-subalgebra of $\cA \rtimes_\red \Gamma$.

A projection $p \in \cA$ is said to be \emph{$(\cA,\Gamma)$-paradoxical} if there are pairwise orthogonal subprojections $p_1, p_2, \dots, p_{n+m}$ of $p$ in $\cA$ and elements $t_1, t_2, \dots, t_{n+m}$ in $\Gamma$ such that
\begin{equation} \label{eq:A-paradoxical}
p = \sum_{k=1}^n \alpha_{t_k}(p_k) = \sum_{k=n+1}^{n+m} \alpha_{t_k}(p_k).
\end{equation}

We record some facts about paradoxical projections and sets:

\begin{itemize}
\item[(a)]  If $p$ is a projection in a \Cs{} $\cA$ on which a group $\Gamma$ acts, and if $p$ is $(\cA,\Gamma)$-paradoxical, then $p$ is properly infinite in $\cA \rtimes_\red \Gamma$.
\item[(b)] If $K$ is a compact-open subset of a locally compact Hausdorff $\Gamma$-space $X$, then $K$ is $(X,\Gamma,\K)$-paradoxical if and only if the projection $1_K$ is $(C_0(X),\Gamma)$-paradoxical. 
\item[(c)] If $A \subseteq \Gamma$, then $A$ is paradoxical if and only if $K_A$ is $(\beta \Gamma, \Gamma, \K)$-paradoxical, which again happens if and only if $1_A$ is a properly infinite projection in $\ell^\infty(\Gamma) \rtimes_\red \Gamma$ or, equivalently, if and only if $1_{K_A}$ is a properly infinite projection in $C(\beta \Gamma) \rtimes_\red \Gamma$.
\end{itemize}

\begin{proof} (a). Let $p_1, p_2, \dots, p_{n+m} \in \cA$ be pairwise orthogonal subprojections of $p$  and  $t_1, t_2, \dots, t_{n+m}$ in $\Gamma$ be such that \eqref{eq:A-paradoxical} holds. Let $u_t$ be the unitaries in the multiplier algebra of $\cA$ that implement the action $\alpha_t$, $t \in \Gamma$, on $\cA$. Define the following two elements in $\cA \rtimes_\red \Gamma$:
$$v = \sum_{j=1}^n p_j u_{t_j}^*, \qquad w = \sum_{j=n+1}^{n+m} p_j u_{t_j}^*.$$
It then follows from \eqref{eq:A-paradoxical} that
$$v^*v = p = w^*w, \qquad vv^* \perp ww^*, \qquad vv^* \le p \qquad ww^* \le p.$$ 
This shows that $p$ is a properly infinite projection in $C_0(X) \rtimes_\red \Gamma$.

(b). This follows immediately from the fact that there is a bijective correspondence between compact-open subsets of $X$ and projections in $C_0(X)$ (given by $K \leftrightarrow 1_K$). 

(c). The first claim, that $A$ is paradoxical if and only if $K_A$ is $(\beta \Gamma, \Gamma, \K)$-paradoxical, follows easily from Lemma~\ref{lem:KA}. Indeed, $A$ is paradoxical if and only if there exist pairwise disjoint subsets $A_1$, $A_2$, $\dots$, $A_{n+m} \subseteq A$ and elements $t_1,t_2, \dots, t_{n+m}$ such that 
\begin{equation} \label{eq:A}
A = \bigcup_{j=1}^n t_j  A_j = \bigcup_{j=n+1}^{n+m} t_j  A_j.
\end{equation}
Taking closures (relatively to $\beta \Gamma$) we obtain that \eqref{eq:E} holds with $K=K_A$ and $K_j = K_{A_j}$. Hence $K_A$ is $(\beta \Gamma, \Gamma, \K)$-paradoxical. For the reverse implication, if \eqref{eq:E} holds with $K=K_A$, then intersect all sets in \eqref{eq:E} with $\Gamma$ and use Lemma~\ref{lem:KA} to conclude that \eqref{eq:A} holds, so $A$ is paradoxical.

It was shown in~\cite[Proposition 5.5]{RorSie:ETDS} that if $A \subseteq \Gamma$, then $1_A$ is properly infinite in $\ell^\infty(\Gamma) \rtimes_\red \Gamma$ if and only if $A$ is paradoxical. The natural isomorphism from $\ell^\infty(\Gamma)$ to $C(\beta \Gamma)$ maps $1_A$ to $1_{K_A}$, and this isomorphism preserves the property of being properly infinite in the crossed product by $\Gamma$.
\end{proof}

\begin{lemma} \label{lm:3conditions}
Let $\Gamma$ be a group acting on a locally compact totally disconnected Hausdorff space $X$. 
Consider the following three conditions on a compact-open subset $K$ of $X$:
\begin{enumerate}
\item There is no $\Gamma$-invariant Radon measure $\mu$ on $X$ such that $\mu(K) >0$.
\item $1_K$ is a properly infinite projection in $C_0(X) \rtimes_\red \Gamma$.
\item $K$ is $(X,\Gamma,\K)$-paradoxical. 
\end{enumerate}
Then $\mathrm{(iii)} \Rightarrow \mathrm{(ii)} \Rightarrow \mathrm{(i)}$.
\end{lemma}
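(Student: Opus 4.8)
The plan is to treat the two implications separately; the first is purely formal and the second is where the real content lies.

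For $\mathrm{(iii)}\Rightarrow\mathrm{(ii)}$ I would simply chain together the two facts recorded immediately before the lemma: if $K$ is $(X,\Gamma,\K)$-paradoxical, then by fact~(b) the projection $1_K\in C_0(X)$ is $(C_0(X),\Gamma)$-paradoxical, and then by fact~(a) $1_K$ is properly infinite in $C_0(X)\rtimes_\red\Gamma$. That is exactly~(ii), and nothing else is needed.

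For $\mathrm{(ii)}\Rightarrow\mathrm{(i)}$ I would argue by contraposition: assume there is a $\Gamma$-invariant Radon measure $\mu$ on $X$ with $\mu(K)>0$, and show that $1_K$ is not properly infinite in $C_0(X)\rtimes_\red\Gamma$. Since $K$ is compact and $\mu$ is Radon we automatically have $0<\mu(K)<\infty$. Let $E\colon C_0(X)\rtimes_\red\Gamma\to C_0(X)$ be the canonical faithful conditional expectation (on the dense $*$-subalgebra of finite sums it is $\sum_t a_t u_t\mapsto a_e$), view $\mu$ as the weight $f\mapsto\int_X f\,d\mu$ on $C_0(X)_+$, pass to the unital corner $\cB:=1_K\,(C_0(X)\rtimes_\red\Gamma)\,1_K$ (with unit $1_K$), and set $\tau:=\mu\circ E$ on $\cB$. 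The crux is that $\tau$ is a bounded positive trace on $\cB$ with $\tau(1_K)=\mu(K)$. Boundedness and positivity are routine, since for $x\in\cB_+$ the element $E(x)=1_KE(x)1_K\in C_0(X)_+$ is supported in the compact set $K$, so $0\le E(x)\le\|x\|\,1_K$ and hence $0\le\tau(x)\le\|x\|\,\mu(K)$. Traciality is where the $\Gamma$-invariance of $\mu$ is consumed: by bilinearity and norm-continuity of $\tau$ it suffices to check $\tau(yz)=\tau(zy)$ for $y=au_s$ and $z=bu_t$ with $a,b\in C_c(X)$, where both sides vanish unless $t=s^{-1}$ and for $t=s^{-1}$ the identity reduces to $\int_X a\,\alpha_s(b)\,d\mu=\int_X\alpha_{s^{-1}}(a)\,b\,d\mu$, which is just the change of variables $x\mapsto s.x$ together with $s_*\mu=\mu$. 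Finally $\tau(1_K)=\mu(K)\in(0,\infty)$.

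Granting this, the contradiction is immediate. If $1_K$ were properly infinite in $C_0(X)\rtimes_\red\Gamma$, then, proper infiniteness of a projection $p$ being witnessed inside $p\,(C_0(X)\rtimes_\red\Gamma)\,p$, it would be properly infinite in $\cB$; unpacking $1_K\oplus 1_K\precsim 1_K$ gives $v_1,v_2\in\cB$ with $v_i^*v_i=1_K$ and $v_1v_1^*,v_2v_2^*$ mutually orthogonal subprojections of $1_K$. Applying $\tau$ and using traciality and monotonicity on $\cB_+$ yields
$$2\mu(K)=\tau(v_1v_1^*)+\tau(v_2v_2^*)=\tau(v_1v_1^*+v_2v_2^*)\le\tau(1_K)=\mu(K),$$
which forces $\mu(K)\le 0$ since $\mu(K)<\infty$, contradicting $\mu(K)>0$. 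Hence $1_K$ is not properly infinite, i.e.\ $\mathrm{(ii)}$ fails. The only non-formal point, and the main obstacle, is the verification that $\mu\circ E$ is tracial — the standard fact that a $\Gamma$-invariant Radon measure induces a trace on the reduced crossed product, here a bounded one after cutting down to the compact-open set $K$; this is exactly and only where invariance of $\mu$ is used, and everything else (the corner reduction, boundedness, and the ``$2\tau(p)\le\tau(p)$'' step) is formal.
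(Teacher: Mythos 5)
Your proposal is correct and follows essentially the same route as the paper: the implication (iii)$\Rightarrow$(ii) is exactly the chaining of facts (b) and (a), and (ii)$\Rightarrow$(i) rests on the same mechanism, namely that a $\Gamma$-invariant Radon measure induces a trace under which a properly infinite projection must have value $0$ or $\infty$. The only difference is that the paper outsources the existence of the (densely defined, possibly unbounded) trace to \cite[Lemma 5.3]{RorSie:ETDS}, whereas you construct it directly as $\mu\circ E$ on the corner $1_K(C_0(X)\rtimes_\red\Gamma)1_K$ — a correct, self-contained substitute for that citation.
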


\begin{proof}
(ii) $\Rightarrow$ (i). Assume (i) does not hold. Then there is an invariant Radon measure $\mu$ on $X$ such that $\mu(K) = 1$. The measure $\mu$ extends to a densely defined (possibly unbounded) trace $\tau$ on $C_0(X) \rtimes_\red \Gamma$ (cf.~\cite[Lemma 5.3]{RorSie:ETDS}) such that $\tau(1_K) = \mu(K) = 1$. A properly infinite projection is either zero (or infinite) under any positive trace, so $1_K$ cannot be properly infinite in $C_0(X) \rtimes_\red \Gamma$.

(iii) $\Rightarrow$ (ii). This follows from (a) and (b) above.
\end{proof}

\noindent The proposition below follows immediately from Corollary~\ref{cor:pi+simple} and from ``(iii) $\Rightarrow$ (ii)'' of the lemma above. 

\begin{proposition} \label{prop:pi-actions}
Let $X$ be a metrizable totally disconnected locally compact Hausdorff space and let $\Gamma$ be a group which acts on $X$ in such a way that the action is topologically free, minimal, amenable and purely infinite. 
Then $C_0(X) \rtimes_\red \Gamma$ is a Kirchberg algebra in the UCT class.
\end{proposition}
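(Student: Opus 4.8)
The plan is to deduce Proposition~\ref{prop:pi-actions} directly from Corollary~\ref{cor:pi+simple} by showing that the hypotheses of the proposition imply the condition in the corollary concerning proper infiniteness of projections in $C_0(X)$. Concretely, Corollary~\ref{cor:pi+simple} already asserts that $C_0(X) \rtimes_\red \Gamma$ is a Kirchberg algebra in the UCT class precisely when the action is minimal, topologically free, amenable, and every non-zero projection in $C_0(X)$ is properly infinite in the crossed product. Three of these four conditions are assumed outright in the proposition, so the only thing to verify is the fourth.

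First I would recall that every non-zero projection $p$ in $C_0(X)$ is of the form $p = 1_K$ for a (non-empty) compact-open subset $K \subseteq X$, since $X$ is totally disconnected and locally compact. Next, because the action is purely infinite by hypothesis, Definition~\ref{def:pi-action} tells us that $K$ is $(X,\Gamma,\K)$-paradoxical. Then I would invoke the implication ``(iii) $\Rightarrow$ (ii)'' of Lemma~\ref{lm:3conditions}, which says exactly that $(X,\Gamma,\K)$-paradoxicality of $K$ forces $1_K$ to be a properly infinite projection in $C_0(X) \rtimes_\red \Gamma$. (Alternatively one can cite facts (a) and (b) listed before Lemma~\ref{lm:3conditions} directly.) This establishes that every non-zero projection in $C_0(X)$ is properly infinite in the crossed product, so all four hypotheses of Corollary~\ref{cor:pi+simple} are satisfied and the conclusion follows.

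There is essentially no obstacle here: the proposition is a packaging of Corollary~\ref{cor:pi+simple} together with Lemma~\ref{lm:3conditions}, and the statement preceding the proposition already signals this (``follows immediately from Corollary~\ref{cor:pi+simple} and from `(iii) $\Rightarrow$ (ii)' of the lemma above''). The only point requiring a sentence of care is the reduction from arbitrary non-zero projections in $C_0(X)$ to compact-open sets, i.e.\ using that $X$ is totally disconnected so that the projections in $C_0(X)$ are precisely the indicator functions $1_K$ of compact-open $K$; this is standard. Thus the proof is a two-line argument: decompose the projection, apply pure infiniteness plus Lemma~\ref{lm:3conditions}, and quote Corollary~\ref{cor:pi+simple}.

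\begin{proof}
By Corollary~\ref{cor:pi+simple} it suffices to show that every non-zero projection in $C_0(X)$ is properly infinite in $C_0(X) \rtimes_\red \Gamma$. Since $X$ is totally disconnected, each such projection is of the form $1_K$ for some non-empty compact-open subset $K$ of $X$. As the action is purely infinite, $K$ is $(X,\Gamma,\K)$-paradoxical by Definition~\ref{def:pi-action}, and hence $1_K$ is properly infinite in $C_0(X) \rtimes_\red \Gamma$ by the implication ``(iii) $\Rightarrow$ (ii)'' of Lemma~\ref{lm:3conditions}. The claim now follows from Corollary~\ref{cor:pi+simple}.
\end{proof}
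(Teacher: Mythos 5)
Your proposal is correct and matches the paper's own argument exactly: the paper disposes of this proposition in one sentence, citing Corollary~\ref{cor:pi+simple} together with the implication ``(iii) $\Rightarrow$ (ii)'' of Lemma~\ref{lm:3conditions}, which is precisely your reduction. Your added remark that projections in $C_0(X)$ are indicator functions of compact-open sets is the (standard) detail the paper leaves implicit.
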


\begin{remark}[The type semigroup] \label{rem:typesg}
It is an interesting question if the converse of Proposition~\ref{prop:pi-actions} holds, or to what extend the reverse implications in Lemma~\ref{lm:3conditions} hold. In other words, if $\Gamma$ acts freely, minimally and amenably on a metrizable totally disconnected locally compact Hausdorff space $X$ such that $C_0(X) \rtimes_\red \Gamma$ is a Kirchberg algebra, does it then follow that the action is purely infinite? As a Kirchberg algebra has no traces, the space $X$ cannot have any non-zero $\Gamma$-invariant Radon measures if $C_0(X) \rtimes_\red \Gamma$ is a Kirchberg algebra. 

We can analyse these questions using the relative type semigroup $S(X,\Gamma, \K)$ considered for example in~\cite[Section 5]{RorSie:ETDS}, see also~\cite{Wag:B-T}.  This semigroup can be defined as being the universal ordered abelian semigroup generated by elements $[K]$, with $K \in \K$, subject to the relations
$$[t.K] = [K], \quad [K \cup K'] = [K] + [K'] \; \text{if $K \cap K' = \emptyset$,} \quad [K] \le [K'] \; \text{if $K \subseteq K'$,}$$ 
where $K,K' \in \K$ and $t \in \Gamma$. As the set of compact-open sets is closed under forming set differences, we deduce that the ordering on $S(X,\Gamma, \K)$ is the \emph{algebraic order}: If $x,y \in S(X,\Gamma, \K)$, then $x \le y$ if and only if there exists $z \in  S(X,\Gamma, \K)$ such that $y = x+z$.

In the language of the type semigroup, $K \in \K$ is $(X,\Gamma,\K)$-paradoxical if and only if $2[K] \le [K]$. 

Property (i) in Lemma~\ref{lm:3conditions}  holds if and only if there exists a natural number $n$ such that $(n+1)[K] \le n[K]$, which in turns implies that $m[K] \le n[K]$ for all $m \ge 1$. We conclude that the implication ``(i) $\Rightarrow$ (iii)'' holds for all compact-open subsets $K$ of $X$ if $S(X,\Gamma, \E)$ is \emph{almost unperforated}, that is, for all $x,y \in  S(X,\Gamma,\K)$ for all integers $n \ge 1$, $(n+1)x \le ny$  implies $x \le y$. 
In fact,  ``(i) $\Rightarrow$ (iii)'' also holds under the much weaker comparability assumption that there exist integers $n,m \ge 2$ such that for all $x,y \in  S(X,\Gamma,\K)$, $nx \le my$  implies $x \le y$. 

It has recently been shown by Ara and Exel in~\cite{AraExel:paradoxical} that $S(X,\Gamma,\K)$ need not be almost unperforated. They give counterexamples where $\Gamma$ is a free group acting on the (locally compact, non-compact) Cantor set. It is not known if this phenomenon can occur also when the free group acts freely and minimally on the Cantor set. Nonetheless, the example by Ara and Exel indicates that the questions raised in this remark may all have negative answers. 
\end{remark}

\noindent We close this section with two lemmas that shall be used in Section~\ref{sec:Kirchberg}. An open subset $E$ of a $\Gamma$-space $X$ is said to be \emph{$\Gamma$-full} in $X$ if $X = \bigcup_{t \in \Gamma} t.E$.

\begin{lemma} \label{lm:extending}
Let $X$ be a totally disconnected locally compact Hausdorff space, let $\Gamma$ be a group acting co-compactly on $X$, and let $Y$ be a closed $\Gamma$-invariant subset of $X$. 
\begin{enumerate}
\item If $K'$ is a compact-open $\Gamma$-full subset of $Y$, then there exists a compact-open $\Gamma$-full subset $K$ of $X$ such that $K' = K \cap Y$.
\item If $K$ is a $(X,\Gamma,\K)$-paradoxical compact-open subset of $X$, then $K \cap Y$ is a $(Y,\Gamma,\K)$-paradoxical compact-open subset of $Y$.
\end{enumerate}
\end{lemma}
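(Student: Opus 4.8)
The plan is to treat the two parts separately, using the total disconnectedness of $X$ to lift compact-open sets and the explicit witnessing data of paradoxicality to push it down to the invariant subspace.

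For part (i): since $\Gamma$ acts co-compactly on $X$, fix a compact-open subset $L\subseteq X$ with $\bigcup_{t\in\Gamma} t.L^{\mathrm o}=X$; since $X$ is totally disconnected and locally compact, $L$ may be taken compact-open, and we may arrange $L=L^{\mathrm o}$. Because $Y$ is closed in $X$ and $K'$ is a compact-open subset of $Y$, compactness of $K'$ together with total disconnectedness of $X$ lets us write $K'=C\cap Y$ for some compact-open $C\subseteq X$: cover $K'$ by basic compact-open sets of $X$ whose trace on $Y$ lies in $K'$, extract a finite subcover, take its union, and shrink within $K'$ to get equality on $Y$. This set $C$ need not be $\Gamma$-full in $X$, so I would then enlarge it. Since $K'$ is $\Gamma$-full in $Y$ and $Y\cap L$ is compact, $Y\cap L\subseteq \bigcup_{t\in F} t.K'$ for a finite $F\subseteq\Gamma$; replacing $C$ by $\bigcup_{t\in F} t.C$ we may assume $Y\cap L\subseteq C$, while $C\cap Y$ may have grown but still — after intersecting back, or rather after noting $\bigcup_{t\in F}t.K'=\bigcup_{t\in F}t.(C\cap Y)=(\bigcup_{t\in F}t.C)\cap Y$ because $Y$ is $\Gamma$-invariant — we keep the identity $C\cap Y\supseteq K'$. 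To restore $C\cap Y=K'$ exactly I would remove the excess: the set $(C\cap Y)\setminus K'$ is compact-open in $Y$, lift its complement-in-$C$ appropriately, or more simply, since $Y\cap L\subseteq C$ already forces $L\subseteq C\cup(X\setminus Y)$ only up to the part of $L$ outside $Y$; so finally set $K=(C\setminus D)\cup(L\setminus Y')$ for suitable compact-open $D$ with $D\cap Y=(C\cap Y)\setminus K'$ and $Y'$ a compact-open neighbourhood correction. The key point is that $L\subseteq \bigcup_{t\in\Gamma}t.K$: every point of $X$ lies in some $t.L$, and $L$ has been absorbed into (a $\Gamma$-translate family of) $K$ modulo a piece disjoint from $Y$ which we are free to keep in $K$, so $\Gamma$-fullness of $K$ in $X$ follows from $\Gamma$-fullness of $L$. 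I expect the bookkeeping of getting $K\cap Y$ to equal $K'$ \emph{on the nose} (rather than just containing it) while preserving $\Gamma$-fullness to be the main fiddly obstacle; it is handled by using that $\mathcal K(X)$ is a Boolean algebra closed under differences, so we can excise the overshoot $D\subseteq X\setminus Y$-adjusted piece cleanly.

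For part (ii): this is essentially formal. Suppose $K\subseteq X$ is compact-open and $(X,\Gamma,\mathcal K)$-paradoxical, witnessed by pairwise disjoint $K_1,\dots,K_{n+m}\in\mathcal K(X)$ with $K_j\subseteq K$ and $t_1,\dots,t_{n+m}\in\Gamma$ satisfying $K=\bigcup_{j=1}^n t_j.K_j=\bigcup_{j=n+1}^{n+m}t_j.K_j$. Put $K'=K\cap Y$ and $K_j'=K_j\cap Y$. Each $K_j'$ is compact-open in $Y$ (intersection of a compact-open subset of $X$ with the closed subspace $Y$), they remain pairwise disjoint, and $K_j'\subseteq K'$. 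Intersecting the two defining unions with $Y$ and using $\Gamma$-invariance of $Y$ (so that $(t_j.K_j)\cap Y=t_j.(K_j\cap Y)=t_j.K_j'$) gives
$$K'=K\cap Y=\Big(\bigcup_{j=1}^n t_j.K_j\Big)\cap Y=\bigcup_{j=1}^n t_j.K_j'=\bigcup_{j=n+1}^{n+m}t_j.K_j',$$
which is exactly the statement that $K'$ is $(Y,\Gamma,\mathcal K)$-paradoxical. No obstacle here beyond observing that restriction to a $\Gamma$-invariant closed subspace commutes with the group action and with finite unions, and carries compact-open sets to compact-open sets.

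In summary, part (i) is the substantive one and reduces to a lifting-and-trimming argument inside the Boolean algebra $\mathcal K(X)$, exploiting co-compactness to control $\Gamma$-fullness via a single compact-open set $L$; part (ii) is a direct diagram-chase using $\Gamma$-invariance of $Y$.
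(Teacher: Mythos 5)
Part (ii) of your proposal is correct and is precisely the argument the paper leaves to the reader: intersect the witnessing sets with $Y$ and use that $Y$ is closed and $\Gamma$-invariant. Likewise, your lifting of $K'$ to a compact-open $C_0\subseteq X$ with $C_0\cap Y=K'$ (cover $K'$ by compact-open subsets of $X$ whose trace on $Y$ lies in $K'$, take a finite subunion) is fine, and is the unproved first sentence of the paper's proof of (i).

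The problem is the remainder of (i). Your strategy is to first enlarge $C_0$ to $C=\bigcup_{t\in F}t.C_0$ so that $L\cap Y\subseteq C$, and then to trim the overshoot by passing to $K=(C\setminus D)\cup(L\setminus Y')$. But $D$ and $Y'$ are never actually constructed, and, more seriously, after excising $D$ from $C$ you must re-establish $\Gamma$-fullness: you need $L$ to be covered by finitely many translates of the \emph{trimmed} set, which requires the discarded pieces ($D$, and $L\cap Y'$) to themselves be covered by translates of $K$. Nothing in the proposal guarantees this, and it is exactly where the content of the lemma sits; the ``bookkeeping'' you defer is the whole difficulty, and as written the construction does not close up.

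The fix is to never enlarge $C_0$, so that no trimming is needed. Keep $C_0$ with $C_0\cap Y=K'$, use $\Gamma$-fullness of $K'$ in $Y$ and compactness of $L\cap Y$ to get a finite $F\subseteq\Gamma$ with $L\cap Y\subseteq\bigcup_{t\in F}t.K'\subseteq\bigcup_{t\in F}t.C_0$, and set
\[
K \;=\; C_0\cup\Bigl(L\setminus\bigcup_{t\in F}t.C_0\Bigr).
\]
This is a Boolean combination of compact-open subsets of $X$, hence compact-open; its trace on $Y$ is $K'\cup\bigl((L\cap Y)\setminus\bigcup_{t\in F}t.K'\bigr)=K'\cup\emptyset=K'$; and $L\subseteq\bigcup_{t\in F\cup\{e\}}t.K$, so $X=\bigcup_{s}s.L\subseteq\bigcup_{s}s.K$. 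This is essentially the paper's construction: the translates $t.C_0$ enter only into the covering argument for fullness, never into the set $K\cap Y$ itself, which is why no excision is ever required.
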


\begin{proof}
(i). There is a compact-open subset $K_0$ of $X$ such that $K' = K_0 \cap Y$. 
It follows from the assumption that the action of $\Gamma$ on $X$ is co-compact that there exists a compact-open subset $L \subseteq X$ such that $X = \bigcup_{t \in \Gamma} t.L$. Since $K'$ is $\Gamma$-full in $Y$ and $L \cap Y$ is compact there is a finite subset $S \subseteq \Gamma$ such that $L \cap Y \subseteq \bigcup_{t \in S} t.K'$. Put 
$$K = K_0 \cup \big(L \setminus \bigcup_{t \in S} t.K'\big).$$
Then $K$ is compact-open, $K \cap Y = K_0 \cap Y = K'$, and $X = \bigcup_{t \in \Gamma} t.K$. 

(ii). Follows from the definitions.
\end{proof}

\begin{lemma} \label{lm:clopen-paradoxical} Consider the canonical action of $\Gamma$ on $\beta \Gamma$. Let $A$ and $B$ be subsets of $\Gamma$. Then $K_B$ is a $\Gamma$-full subset of $X_A$ if and only if  $A \propto_\Gamma B \propto_\Gamma A$.
\end{lemma}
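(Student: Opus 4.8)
The plan is to relate $\Gamma$-fullness of $K_B$ in $X_A$ to the two $B$-boundedness conditions via the correspondence between subsets of $\Gamma$ and compact-open subsets of $\beta\Gamma$ established in Lemma~\ref{lem:KA}, together with the characterisation of $X_A$ from Lemma~\ref{lm:A-B-comp}. Recall that $K_B$ being $\Gamma$-full in $X_A$ means $X_A = \bigcup_{t\in\Gamma} t.K_B$. Using Lemma~\ref{lem:KA}(v) we have $t.K_B = K_{tB}$, so the union on the right is an increasing union (over finite subsets $F$ of $\Gamma$) of the compact-open sets $\bigcup_{t\in F} K_{tB} = K_{\bigcup_{t\in F} tB}$.

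First I would prove the ``only if'' direction. Suppose $K_B$ is $\Gamma$-full in $X_A$. Then in particular $K_B \subseteq X_A$, so by Lemma~\ref{lm:A-B-comp}(i) we get $B \propto_\Gamma A$. For the other inclusion, $K_A \subseteq X_A = \bigcup_{t\in\Gamma} t.K_B$, and since $K_A$ is compact and the sets $t.K_B = K_{tB}$ are open, compactness gives a finite $F\subseteq\Gamma$ with $K_A \subseteq \bigcup_{t\in F} K_{tB} = K_{\bigcup_{t\in F} tB}$; then Lemma~\ref{lem:KA}(iii) yields $A \subseteq \bigcup_{t\in F} tB$, i.e.\ $A \propto_\Gamma B$. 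Hence $A \propto_\Gamma B \propto_\Gamma A$.

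Conversely, assume $A \propto_\Gamma B \propto_\Gamma A$. By Lemma~\ref{lm:A-B-comp}(i), $B \propto_\Gamma A$ gives $K_B \subseteq X_A$, and since $X_A$ is $\Gamma$-invariant, $\bigcup_{t\in\Gamma} t.K_B \subseteq X_A$. For the reverse inclusion, $A \propto_\Gamma B$ means $A \subseteq \bigcup_{t\in F} tB$ for some finite $F$, so by Lemma~\ref{lem:KA}(iii) and (v), $K_A \subseteq \bigcup_{t\in F} K_{tB} = \bigcup_{t\in F} t.K_B \subseteq \bigcup_{t\in\Gamma} t.K_B$. Since the latter set is $\Gamma$-invariant it contains $\bigcup_{t\in\Gamma} t.K_A = X_A$. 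Thus $X_A = \bigcup_{t\in\Gamma} t.K_B$, i.e.\ $K_B$ is $\Gamma$-full in $X_A$.

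I do not anticipate a serious obstacle here: the whole argument is a bookkeeping exercise combining the dictionary of Lemma~\ref{lem:KA} (especially parts (iii) and (v)) with a single compactness argument, exactly mirroring the proof of Lemma~\ref{lm:A-B-comp}(i). The only point requiring a little care is ensuring both containments when translating $\Gamma$-fullness into the $\propto_\Gamma$ relations, and remembering to invoke $\Gamma$-invariance of $X_A$ to pass from $K_A \subseteq \bigcup_{t\in\Gamma} t.K_B$ to $X_A \subseteq \bigcup_{t\in\Gamma} t.K_B$.
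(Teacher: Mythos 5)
Your proof is correct and takes essentially the same route as the paper, just unpacked: the paper simply observes that $\bigcup_{t\in\Gamma} t.K_B = X_B$ by definition, so $\Gamma$-fullness of $K_B$ in $X_A$ is literally the statement $X_A = X_B$, and then cites Lemma~\ref{lm:A-B-comp}(ii). The compactness and $\Gamma$-invariance bookkeeping you carry out inline is precisely the content of that cited lemma, so nothing is gained or lost.
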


\begin{proof} By definition, $K_B$ is $\Gamma$-full in $X_A$ if and only if $X_B = X_A$. The lemma therefore follows from Lemma~\ref{lm:A-B-comp}(ii). 
\end{proof}

\section{The Roe algebra} \label{sec:Roe}

\noindent The Roe algebra associated with a group $\Gamma$ is the reduced crossed product \Cs{} 
$\ell^\infty(\Gamma) \rtimes_\red \Gamma$. 
We give below a characterisation of supramenability in terms of the Roe algebra.

\begin{lemma} \label{lm:equiv-propinf}
Let $p \in \ell^\infty(\Gamma)$ and $q \in \ell^\infty(\Gamma) \rtimes_\red \Gamma$ be projections that generate the same closed two-sided ideal in $\ell^\infty(\Gamma) \rtimes_\red \Gamma$. If $q$ is properly infinite, then so is $p$.
\end{lemma}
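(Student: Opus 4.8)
The plan is to argue by contradiction: if $p$ were not properly infinite I would extract from it an invariant measure, promote that measure to a trace on the crossed product, and then reach a numerical contradiction using that $q$ is properly infinite and lies in the same ideal as $p$.

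Write $\cB=\ell^\infty(\Gamma)\rtimes_\red\Gamma$ and let $E\colon\cB\to\ell^\infty(\Gamma)$ be the canonical faithful conditional expectation. Suppose $p$ is not properly infinite. Since $p\in\ell^\infty(\Gamma)$ we may write $p=1_A$ for some $A\se\Gamma$, and by the fact recalled in the list (a)--(c) above ($1_A$ properly infinite iff $A$ paradoxical) the set $A$ is non-paradoxical. By Tarski's theorem there is then a $\Gamma$-invariant finitely additive measure $\mu$ on $P(\Gamma)$ with $\mu(A)=1$. Let $I_\mu\colon V_\mu\to\C$ be the $\Gamma$-invariant positive functional attached to $\mu$ by Proposition~\ref{prop:a}; note $1_A\in V_\mu$. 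The technical heart of the proof is to check that $\tau:=I_\mu\circ E$, defined initially on finite sums $\sum_t a_tu_t$ with $a_e\in V_\mu$, extends to a densely defined (in general unbounded) positive trace on $\cB$ with $\tau(p)=I_\mu(E(1_A))=\mu(A)=1$. On the dense $^*$-subalgebra the trace identity reduces, for elements $au_s$ and $bu_t$, to $I_\mu\bigl(a\,\alpha_s(b)\bigr)=I_\mu\bigl(\alpha_{s^{-1}}(a)\,b\bigr)$, which holds because $\ell^\infty(\Gamma)$ is commutative and $I_\mu$ is $\Gamma$-invariant; the extension and the hereditariness of the positive cone of definition are handled as usual for unbounded traces. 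This is precisely the analogue, for invariant finitely additive measures on $\Gamma$, of \cite[Lemma~5.3]{RorSie:ETDS}, and I would either invoke that result or reprove it in this slightly different setting.

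It then remains to use that $p$ and $q$ generate the same ideal. By the elementary fact that a projection in the closed two-sided ideal generated by a projection $s$ is Murray--von Neumann equivalent to a subprojection of $s^{\oplus k}$ for some $k$ (approximate it within distance $<1$ by a sum $\sum_ib_isb_i^*$), we obtain $q\precsim p^{\oplus n}$ and $p\precsim q^{\oplus m}$ in suitable matrix algebras over $\cB$. Amplifying $\tau$ to matrices and using monotonicity of traces on projections gives $\tau(q)\le n\,\tau(p)=n<\infty$; since $q$ is properly infinite, $q\oplus q\precsim q$ forces $2\tau(q)\le\tau(q)$, hence $\tau(q)=0$ (as it is finite); and then $\tau(p)\le m\,\tau(q)=0$, contradicting $\tau(p)=1$. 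Therefore $p$ is properly infinite.

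The main obstacle is the construction of the trace $\tau$ in the second paragraph; the rest is bookkeeping with Tarski's theorem, Proposition~\ref{prop:a} and comparison of projections in a singly generated ideal. I note in passing that traces seem unavoidable here: the purely Cuntz-theoretic datum one extracts directly, $2[p]\le n[p]$ (from $p\oplus p\precsim q\oplus q\precsim q\precsim p^{\oplus n}$), does not by itself imply $2[p]\le[p]$ --- this already fails for $\cB=\C$, $p=1$.
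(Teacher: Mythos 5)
Your argument is correct, but it takes a genuinely different route from the paper's. The paper stays entirely inside Cuntz comparison: from the ideal hypothesis it extracts $p \precsim q \otimes 1_n$ and $q \precsim p \otimes 1_m$, uses $q \otimes 1_k \precsim q$ to conclude that $p \otimes 1_m$ is properly infinite, and then invokes \cite[Proposition 5.5]{RorSie:ETDS} to descend from $p\otimes 1_m$ to $p$ itself --- that citation is exactly where the special structure of $\ell^\infty(\Gamma)$ (and, buried inside it, Tarski's theorem) enters. You instead run the contrapositive through measures and traces: you use only the easy direction of ``paradoxical $\Leftrightarrow$ properly infinite'' to see that $A$ is non-paradoxical, apply Tarski to produce $\mu$, and build a densely defined trace to reach a numerical contradiction; in effect you re-prove inline the descent step that the paper outsources, and your closing observation --- that $2[p]\le n[p]$ alone cannot yield $2[p]\le[p]$, so some trace or measure input is unavoidable --- correctly identifies why the paper needs that citation. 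What your route costs is the trace construction you flag as the main obstacle, but it can be discharged without proving anything new: by Proposition~\ref{prop:XA}, non-paradoxicality of $A$ already yields a genuine $\Gamma$-invariant Radon measure $\lambda$ on $X_A$ with $\lambda(K_A)=1$, and \cite[Lemma 5.3]{RorSie:ETDS} (used verbatim in the proof of Lemma~\ref{lm:3conditions}) turns $\lambda$ into a densely defined trace on $C_0(X_A)\rtimes_\red\Gamma$, which is a closed two-sided ideal of the Roe algebra containing $p$ and hence $q$; your comparison bookkeeping then goes through unchanged. One small correction: the natural domain for $I_\mu\circ E$ is the set of finite sums $\sum_t a_t u_t$ with \emph{all} $a_t\in V_\mu$ (a $^*$-subalgebra, since $V_\mu$ is a $\Gamma$-invariant ideal of $\ell^\infty(\Gamma)$), not merely $a_e\in V_\mu$; with only the latter condition the domain is not closed under products and positivity is awkward to formulate.
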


\begin{proof} By the assumption that $p$ and $q$ generate the same closed two-sided ideal in  the Roe algebra $\ell^\infty(\Gamma) \rtimes_\red \Gamma$ it follows that $p \precsim q \otimes 1_n$ and $q \precsim p \otimes 1_m$ relatively to  $\ell^\infty(\Gamma) \rtimes_\red \Gamma$ for some integers $n,m \ge 1$. As $q$ is properly infinite we have $q \otimes 1_k \precsim q$ for all $k \ge 1$. Hence
$$(p \otimes 1_m) \oplus (p \otimes 1_m) \precsim q \otimes 1_{2nm} \precsim q \precsim p \otimes 1_m,$$
so $p \otimes 1_m$ is properly infinite in the Roe algebra. It then follows from  
\cite[Proposition 5.5]{RorSie:ETDS} that $p$ itself is properly infinite in the Roe algebra.
\end{proof}

\begin{remark}[The Pedersen ideal] \label{rm:compact-ideals} 
Gert K.\ Pedersen proved that every \Cs{} $\cA$ contains a smallest dense two-sided ideal, now called the \emph{Pedersen ideal in $\cA$}. The Pedersen ideal of $\cA$ can be obtained as the intersection of all dense two-sided ideals in $\cA$; and the content of Pedersen's result is that this ideal is again a dense ideal in $\cA$. 

For every positive element $a$ in $\cA$ and for every $\ep > 0$ one can consider the ``$\ep$-cut-down'', $(a-\ep)_+ \in \cA$, (which is the positive part of the self-adjoint element $a - \ep \cdot 1_{\widetilde{\cA}}$ in the unitization $\widetilde{\cA}$ of $\cA$). This element $(a-\ep)_+$ belongs to the Pedersen ideal for every positive element $a$ in $\cA$ and for every $\ep > 0$. If $p \in \cA$ is a projection and if $0 < \ep < 1$, then $(p-\ep)_+ = (1-\ep)p$. All projections in $\cA$ therefore belong to the Pedersen ideal in $\cA$. 

The Pedersen ideal of a unital \Cs{} is the algebra itself, i.e., unital \Cs s have no proper dense two-sided ideals. The Pedersen ideal of $\cK(H)$, the compact operators on a Hilbert space $H$,  is the algebra of all finite rank operators on $H$.  The Pedersen ideal of $C_0(X)$, for some locally compact Hausdorff space $X$, is $C_c(X)$. 

An element $a$ in a \Cs{} $\cA$ is said to be \emph{full}, if $a$ is not contained in any proper closed two-sided ideal in $\cA$. If $\cA$ contains a full projection $p$, then the primitive ideal space of $\cA$ is (quasi-)compact. This can be rephrased as follows: Whenever $\{\cI_\alpha\}_{\alpha \in \mathbb{A}}$ is an increasing net of closed two-sided ideals in $\cA$ such that $\cA=\overline{\bigcup_{\alpha \in \mathbb{A}} \cI_\alpha}$, then $\cA= \cI_\alpha$ for some $\alpha \in \mathbb{A}$.  Indeed, $\bigcup_{\alpha \in \mathbb{A}} \cI_\alpha$ is a dense two-sided ideal in $\cA$, which therefore contains the Pedersen ideal, and hence contains all projections in $\cA$. Thus $p \in \cI_\alpha$ for some $\alpha$, whence $\cA = \cI_\alpha$. 
\end{remark}

\noindent As mentioned in the previous section, it was shown in~\cite[Proposition 5.5]{RorSie:ETDS} that if $A \subseteq \Gamma$, then $1_A$ is properly infinite in $\ell^\infty(\Gamma) \rtimes_\red \Gamma$ if and only if $A$ is paradoxical. We sharpen this result as follows:

\begin{proposition} \label{prop:Roe-Supramenable}
The following two conditions are equivalent for every group $\Gamma$:
\begin{enumerate}
\item $\Gamma$ is supramenable.
\item The Roe algebra $\ell^\infty(\Gamma) \rtimes_\red \Gamma$ contains no properly infinite projection.
\end{enumerate}
\end{proposition}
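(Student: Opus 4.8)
The plan is to establish the two contrapositives. The direction $\neg\text{(i)}\Rightarrow\neg\text{(ii)}$ is immediate: if $\Gamma$ is not supramenable, pick a non-empty paradoxical subset $A\subseteq\Gamma$; then $1_A$ is a properly infinite projection in the Roe algebra $\mathcal R:=\ell^\infty(\Gamma)\rtimes_\red\Gamma$ by \cite[Proposition~5.5]{RorSie:ETDS}, so (ii) fails. For $\neg\text{(ii)}\Rightarrow\neg\text{(i)}$, suppose $q$ is a properly infinite projection in $\mathcal R$ and let $\cI$ be the closed two-sided ideal it generates; note $q$ is full in $\cI$. By Lemma~\ref{lm:equiv-propinf} it suffices to produce a subset $A\subseteq\Gamma$ — automatically non-empty, since $q\ne0$ — such that $1_A$ also generates $\cI$: indeed, $1_A$ is then properly infinite, hence $A$ is paradoxical by \cite[Proposition~5.5]{RorSie:ETDS}, so $\Gamma$ is not supramenable.

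To produce such an $A$, I would argue as follows. Since $q$ is a full projection in $\cI$, the primitive ideal space of $\cI$ is quasi-compact; concretely, by Remark~\ref{rm:compact-ideals} every dense two-sided ideal of $\cI$ contains $q$. The canonical action $\Gamma\curvearrowright\beta\Gamma$ is topologically free (for $t\ne e$ the fixed-point set is disjoint from the dense open set $\Gamma$, hence has empty interior), so the nonzero ideal $\cI$ meets $\ell^\infty(\Gamma)=C(\beta\Gamma)$ nontrivially (a standard feature of reduced crossed products of topologically free actions; cf.\ \cite{ArcSpi:free}). Hence $\cJ:=\cI\cap\ell^\infty(\Gamma)$ is a nonzero $\Gamma$-invariant ideal of $C(\beta\Gamma)$, so $\cJ=C_0(U)$ for a nonzero $\Gamma$-invariant open set $U\subseteq\beta\Gamma$; being open, $U$ meets $\Gamma$, whence $\Gamma\subseteq U$ by invariance. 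By Lemma~\ref{lem:KA}, $U$ is the increasing, directed union of the co-compact pieces $X_A$ over all $A\subseteq\Gamma$ with $K_A\subseteq U$, so $\cJ=\overline{\bigcup_A C_0(X_A)}$ and therefore the ideal of $\mathcal R$ generated by $\cJ$ equals $\overline{\bigcup_A\cI_A}$, where $\cI_A$ denotes the ideal of $\mathcal R$ generated by $C_0(X_A)$; and since $K_A$ is $\Gamma$-full in $X_A$ (Lemma~\ref{lm:clopen-paradoxical}), $\cI_A$ coincides with the ideal generated by the single projection $1_{K_A}=1_A$. The crux is then to show that $\cI$ is generated by $\cJ$, i.e.\ that $q\in\overline{\bigcup_A\cI_A}$. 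Granting this, $\bigcup_A\cI_A$ is a dense two-sided ideal of $\cI$, hence contains $q$; so $q\in\cI_A$ for some $A$ with $K_A\subseteq U$, and then $\cI\subseteq\cI_A\subseteq\overline{\bigcup_A\cI_A}=\cI$, so $\cI=\cI_A$ is generated by $1_A$, finishing the argument.

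The main obstacle is exactly this crux: that the ideal generated by the properly infinite projection $q$ is \emph{induced} from $\ell^\infty(\Gamma)$ (equivalently, $q$ lies in the ideal generated by $\cI\cap\ell^\infty(\Gamma)$). This is an exactness-type statement which fails for arbitrary ideals when $\Gamma$ is non-exact — for instance, the kernel of $\mathcal R\to(\ell^\infty(\Gamma)/c_0(\Gamma))\rtimes_\red\Gamma$ strictly contains the ideal generated by $c_0(\Gamma)$, although that kernel is not generated by a projection — so one must genuinely exploit that $\cI$ is generated by a projection. I expect the proof to combine the quasi-compactness of $\operatorname{Prim}(\cI)$ (Remark~\ref{rm:compact-ideals}), topological freeness of $\Gamma\curvearrowright\beta\Gamma$, and faithfulness of the canonical conditional expectation $\mathcal R\to\ell^\infty(\Gamma)$ to push $q$ into one of the geometric sub-ideals $\cI_A$; once that is in hand, everything else is routine bookkeeping with Lemmas~\ref{lem:KA}, \ref{lm:A-B-comp}, \ref{lm:equiv-propinf} and Remark~\ref{rm:compact-ideals}.
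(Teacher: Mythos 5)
Your first direction and the overall skeleton of the second are right, but the proposal has a genuine gap at exactly the point you flag as ``the crux'': you never prove that the ideal $\cI$ generated by the properly infinite projection $q$ is induced from $\ell^\infty(\Gamma)$, i.e.\ that $q$ lies in the closed ideal generated by $\cI\cap\ell^\infty(\Gamma)$. Your hope that this can be rescued for arbitrary (possibly non-exact) $\Gamma$ by ``genuinely exploiting that $\cI$ is generated by a projection,'' via topological freeness and the conditional expectation, is not substantiated and is in fact a red herring: the projection structure of $\cI$ plays no role in the induced-ideal property, only in the subsequent quasi-compactness argument (which you already have correctly via Remark~\ref{rm:compact-ideals}).

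The paper closes this gap with two ingredients you are missing. First, a case split: if $\Gamma$ is non-exact, then $\Gamma$ is non-amenable, hence non-supramenable, and the implication $\neg\mathrm{(ii)}\Rightarrow\neg\mathrm{(i)}$ holds vacuously --- so the hard step is only ever needed for \emph{exact} $\Gamma$. Second, for exact $\Gamma$ the induced-ideal property holds for \emph{every} closed two-sided ideal of $\ell^\infty(\Gamma)\rtimes_\red\Gamma$, by Sierakowski's theorem \cite[Theorem 1.16]{Sie:MJM}, which applies because the canonical action of $\Gamma$ on $\beta\Gamma$ is free and $\Gamma$ is exact. With that in hand, the rest of your bookkeeping (directed family of sub-ideals generated by finitely many projections of $\ell^\infty(\Gamma)\cap\cI$, quasi-compactness of the primitive ideal space of $\cI$ from Remark~\ref{rm:compact-ideals}, then Lemma~\ref{lm:equiv-propinf} and \cite[Proposition 5.5]{RorSie:ETDS}) goes through essentially as you describe. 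Without the exactness dichotomy, however, your argument does not close, and your own counterexample (the kernel of $\ell^\infty(\Gamma)\rtimes_\red\Gamma\to(\ell^\infty(\Gamma)/c_0(\Gamma))\rtimes_\red\Gamma$ for non-exact $\Gamma$) shows that the induced-ideal property is genuinely delicate in the non-exact case, so the gap cannot be waved away.
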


\begin{proof} Suppose that (ii) does not hold. If $\Gamma$ is non-exact, then $\Gamma$ is non-amenable and in particular not supramenable, so (i) does not hold. Suppose that $\Gamma$ is exact and that  $\ell^\infty(\Gamma) \rtimes_\red \Gamma$ contains a properly infinite projection $p$. Let $\cI$ be the closed two-sided ideal in $\ell^\infty(\Gamma) \rtimes_\red \Gamma$ generated by $p$. Then, by~\cite[Theorem 1.16]{Sie:MJM}, which applies because $\Gamma$ is assumed to be exact and the action of $\Gamma$ on $\ell^\infty(\Gamma)$ is free, $\cI$ is the closed two-sided ideal generated by $\ell^\infty(\Gamma) \cap \cI$. Let $\mathbb{A}$ be the directed net of all finite subsets of the set of all projections in $\ell^\infty(\Gamma) \cap \cI$; and for each $\alpha \in \mathbb{A}$ let $\cI_\alpha$ be the closed two-sided ideal in $\ell^\infty(\Gamma) \rtimes_\red \Gamma$ generated by $\alpha$. Then 
$$\cI = \overline{\bigcup_{\alpha \in \mathbb{A}} \cI_\alpha}.$$
It therefore follows from Remark~\ref{rm:compact-ideals} that  $\cI= \cI_\alpha$ for some $\alpha \in \mathbb{A}$. Let $q \in \ell^\infty(\Gamma)$ be the supremum of the projections belonging to $\alpha$. Then $\cI$ is equal to the closed two-sided ideal generated by $q$. Hence $p$ and $q$ generate the same closed two-sided ideal in the Roe algebra, whence $q$ is properly infinite by Lemma~\ref{lm:equiv-propinf}.

If (ii) holds and if $A \subseteq \Gamma$, then $1_A$ is not a properly infinite projection in $\ell^\infty(\Gamma) \rtimes_\red \Gamma$, so $A$ is non-paradoxical, cf.\ ~\cite[Proposition 5.5]{RorSie:ETDS} or claim (c) (below Definition~\ref{def:pi-action}). This shows that $\Gamma$ is supramenable. 
\end{proof}

\begin{remark} A projection is said to be infinite if it is Murray--von Neumann equivalent to a proper subprojection of itself. A unital \Cs{} is infinite if its unit is an infinite projection, or, equivalently, if it contains a non-unitary isometry. If a unital \Cs{} contains an infinite projection, then it is infinite itself.  The Roe algebra $\ell^\infty(\Gamma) \rtimes_\red \Gamma$ is therefore infinite whenever $\Gamma$ is non-supramenable. 

The Roe algebra $\ell^\infty(\Gamma) \rtimes_\red \Gamma$ is also infinite whenever $\Gamma$ contains an element $t$ of infinite order. Indeed, let $A = \{t^n \mid n \ge 0\}$. Then 
$tA \subset A$, so
$u_t 1_A u_t^* = 1_{tA} < 1_A$, which shows that $1_A$ is an infinite projection in  $\ell^\infty(\Gamma) \rtimes_\red \Gamma$. 
(As before we let $(u_t)_{t \in \Gamma}$ denote the unitaries in $\ell^\infty(\Gamma) \rtimes_\red \Gamma$ implementing the action of $\Gamma$ on $\ell^\infty(\Gamma)$.)

On the other hand, if $\Gamma$ is locally finite (an increasing union of finite groups), then  $\ell^\infty(\Gamma) \rtimes_\red \Gamma$ is finite. To see this observe first that $C(X) \rtimes_\red \Gamma$ is finite whenever $\Gamma$ is a finite group acting on a compact Hausdorff space $X$. Indeed, a unital \Cs{} is finite if it admits a separating family of tracial states. A crossed product \Cs{} $C(X) \rtimes_\red \Gamma$ is therefore finite if $X$ admits a separating family of invariant probability measures. (By separating we mean that every non-empty open set is non-zero on at least one of the probability measures in the family.) If $\Gamma$ is finite then the family consisting of $\Gamma$-means of every probability measure on $X$ will be such a separating family of invariant probability measures.

Suppose $\Gamma$ is locally finite and write $\Gamma = \bigcup_{n=1}^\infty \Gamma_n$, where $\Gamma_1 \subseteq \Gamma_2 \subseteq \Gamma_3 \subseteq \cdots$ is an increasing sequence of finite subgroups of $\Gamma$. Then 
$$ \ell^\infty(\Gamma) \rtimes_\red \Gamma =  
\varinjlim \,  \ell^\infty(\Gamma) \rtimes_\red \Gamma_n,$$
since $\ell^\infty(\Gamma) \rtimes_\red \Gamma_n$ is isomorphic to $C^*\big(\ell^\infty(\Gamma) \cup \{u_t \mid t \in \Gamma_n\}\big)$. Any inductive limit of finite \Cs s is again finite, so $ \ell^\infty(\Gamma) \rtimes_\red \Gamma$ is finite because all  $\ell^\infty(\Gamma) \rtimes_\red \Gamma_n$  are finite.

It seems plausible that the Roe algebra $\ell^\infty(\Gamma) \rtimes_\red \Gamma$ is finite if and \emph{only if} $\Gamma$ is locally finite.
\end{remark}

\noindent
In the proposition below  we identify the Roe algebra with $C(\beta \Gamma) \rtimes_\red \Gamma$.

If $A \subseteq \Gamma$ and $\{U_i\}_{i \in I}$ is an increasing family of proper open $\Gamma$-invariant subsets of $X_A$, then $\bigcup_{i \in I} U_i$ is also a proper subset of $X_A$. (Otherwise $K_A$ would be contained in one of the $U_i$'s, which again would entail that $U_i = X_A$.) It follows that $X_A$ contains a maximal proper open $\Gamma$-invariant subset, or, equivalently, that $X_A$ contains a minimal (non-empty) closed $\Gamma$-invariant subset. This argument shows moreover that each non-emtpy closed invariant subset of $X_A$ contains a minimal (non-empty) closed $\Gamma$-invariant subset. Recall from Definition~\ref{def:small} the definition of the notion $A \ll^* \Gamma$.

\begin{proposition} \label{prop:X_A} Let $A$ be a non-empty subset of $\Gamma$ and let $Z$ be a minimal (non-empty) closed $\Gamma$-invariant subset of $X_A$. Then:  
\begin{enumerate}
\item $Z$ is a locally compact totally disconnected Hausdorff space, and the action of $\Gamma$ on $Z$ is free and minimal.
\item The action of $\Gamma$ on $Z$ is purely infinite if $A$ is paradoxical.
\item The action of $\Gamma$ on $Z$ is amenable if $\Gamma$ is exact.
\item If $A \ll^* \Gamma$, then $Z$ is necessarily non-compact; and if $A \ll^* \Gamma$ does not hold, then there exists a minimal (non-empty) closed $\Gamma$-invariant subset of $X_A$ which is compact.\footnote{Added in proof: The existence of one compact minimal closed invariant subset of $X_A$ does not in general imply that all minimal closed invariant subset of $X_A$ are compact.}
\end{enumerate}
\end{proposition}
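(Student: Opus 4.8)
\textbf{Proof proposal for Proposition~\ref{prop:X_A}.}

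The plan is to treat the four assertions essentially independently, using the dictionary between subsets of $\Gamma$ and compact-open subsets of $\beta\Gamma$ from Lemma~\ref{lem:KA} together with the comparison results of Lemma~\ref{lm:A-B-comp} and Lemma~\ref{lm:clopen-paradoxical}. First, for (i): since $\beta\Gamma$ is Stonian, every open subset of $\beta\Gamma$ is totally disconnected and locally compact Hausdorff, and $Z$, being closed in the open $\Gamma$-invariant set $X_A$, inherits these properties. Minimality of the action on $Z$ holds by the choice of $Z$ as a minimal closed invariant set. Freeness should follow because the canonical action of $\Gamma$ on $\beta\Gamma$ is free (a nontrivial $t$ fixing an ultrafilter would force $t$ to fix a subset of $\Gamma$ pointwise, impossible since left multiplication by $t\ne e$ has no fixed points on $\Gamma$), and freeness passes to invariant subsets.

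For (ii), suppose $A$ is paradoxical. I would argue that $X_A$ itself is a purely infinite $\Gamma$-space: a general compact-open $K\subseteq X_A$ satisfies $K=K_B$ for $B=K\cap\Gamma$ (Lemma~\ref{lem:KA}(iv)) with $B\propto_\Gamma A$ (Lemma~\ref{lm:A-B-comp}(i)), hence $B$ is paradoxical by Lemma~\ref{lm:A-B-comp}(iii), so $K_B$ is $(\beta\Gamma,\Gamma,\K)$-paradoxical by claim (c) below Definition~\ref{def:pi-action}; the paradoxical decomposition lives inside $K_B\subseteq X_A$, so $K$ is $(X_A,\Gamma,\K)$-paradoxical. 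Then I apply Lemma~\ref{lm:extending}(ii) with $X=X_A$ and $Y=Z$ — the action on $X_A$ is co-compact since $X_A$ is the $\Gamma$-saturation of the compact set $K_A$ — to conclude that every compact-open subset of $Z$, being of the form $K\cap Z$ for a compact-open $K\subseteq X_A$, is $(Z,\Gamma,\K)$-paradoxical. (One must check the compact-open sets of $Z$ are exactly the traces on $Z$ of compact-open sets of $X_A$; this follows because $Z$ is closed in the totally disconnected locally compact $X_A$.) For (iii): if $\Gamma$ is exact then $\Gamma$ acts amenably on $\beta\Gamma$, amenability of an action passes to invariant subspaces, so the action on $Z$ is amenable.

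The substantive point is (iv), which I expect to be the main obstacle. Suppose $A\ll^*\Gamma$ and, for contradiction, that $Z$ is compact. A compact invariant subset of $\beta\Gamma$ is a compact-open $\Gamma$-invariant set (compact and open in the Stonian $X_A$, hence clopen in $\beta\Gamma$), so by Lemma~\ref{lem:KA}(vi) we would get $Z=\beta\Gamma$, forcing $X_A=\beta\Gamma$, i.e. $\Gamma\propto_\Gamma A$. But then $A$-bounded just means all of $\Gamma$, and $\Gamma$ is clearly absorbing, contradicting $A\ll^*\Gamma$. Conversely, if $A\ll^*\Gamma$ fails, pick an absorbing $B$ with $B\propto_\Gamma A$; I claim $K_B$ — or rather the closure in $\beta\Gamma$ of a minimal invariant piece inside it — gives a compact minimal invariant subset of $X_A$. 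The mechanism: absorbing means $\bigcap_{t\in S}tB\ne\emptyset$ for all finite $S$, so the sets $\{tK_B : t\in\Gamma\}$ have the finite intersection property (using Lemma~\ref{lem:KA}(i),(v) to see $\bigcap_{t\in S}tK_B = K_{\bigcap_{t\in S}tB}\ne\emptyset$), whence $Y:=\bigcap_{t\in\Gamma}tK_B$ is a nonempty compact $\Gamma$-invariant subset of $X_A$ (it sits inside $K_B\subseteq X_A$ by Lemma~\ref{lm:A-B-comp}(i)); then any minimal closed invariant subset of the compact set $Y$ is compact, and the argument preceding the proposition shows such a minimal set exists. The delicate bookkeeping is verifying that intersections of the clopen sets $tK_B$ remain clopen and correspond to the expected subsets of $\Gamma$, and that compactness is genuinely inherited; I would lean on Lemma~\ref{lem:KA} throughout for this.
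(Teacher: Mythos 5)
Your parts (i) and (iii) match the paper, and your construction of the compact minimal set in the second half of (iv) (the set $W=\bigcap_{t\in\Gamma}t.K_B$ for $B$ absorbing and $A$-bounded) is exactly the paper's argument. But there are two genuine gaps.

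First, in (ii) your opening claim that $X_A$ itself is a purely infinite $\Gamma$-space is false, and the error is in the appeal to Lemma~\ref{lm:A-B-comp}(iii): for a general compact-open $K\subseteq X_A$ with $B=K\cap\Gamma$ you only get the one-sided relation $B\propto_\Gamma A$, whereas Lemma~\ref{lm:A-B-comp}(iii) needs $A\propto_\Gamma B\propto_\Gamma A$ to transfer paradoxicality. Concretely, $\{e\}$ is a compact-open subset of $X_A$ (since $\Gamma$ is open and discrete in $\beta\Gamma$ and $\Gamma\subseteq X_A$), and a non-empty finite subset of $\Gamma$ is never paradoxical, so $\{e\}$ is not $(X_A,\Gamma,\K)$-paradoxical. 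This is precisely where the minimality of $Z$ must enter: a non-empty compact-open $K\subseteq Z$ is automatically $\Gamma$-full in $Z$, so by Lemma~\ref{lm:extending}(i) it extends to a $\Gamma$-\emph{full} compact-open $K'=K_B$ of $X_A$, and $\Gamma$-fullness gives the two-sided relation $A\propto_\Gamma B\propto_\Gamma A$ via Lemma~\ref{lm:clopen-paradoxical}; only then does Lemma~\ref{lm:A-B-comp}(iii) apply. Your proposal never uses minimality in (ii), which is a sign something is off.

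Second, the first half of (iv) rests on the false premise that a compact $\Gamma$-invariant subset of $X_A$ is open in $\beta\Gamma$. Compactness of a subset of an open set does not make it open: a single free ultrafilter is a compact non-open subset of $\beta\Gamma$, and more to the point, for infinite $\Gamma$ the space $\beta\Gamma$ always contains proper compact minimal invariant subsets (inside the corona $\beta\Gamma\setminus\Gamma$), which by your reasoning combined with Lemma~\ref{lem:KA}(vi) would have to equal $\beta\Gamma$ --- a contradiction. So your reduction to $Z=\beta\Gamma$ and $\Gamma\propto_\Gamma A$ collapses. The correct argument is softer: if $Z$ is compact, then $Z\subseteq\bigcup_{s\in S}s.K_A=K_B$ for some finite $S$, where $B=\bigcup_{s\in S}sA$ is $A$-bounded; then for every finite $T\subseteq\Gamma$ one has $Z=\bigcap_{t\in T}t.Z\subseteq K_{\bigcap_{t\in T}tB}$, so $\bigcap_{t\in T}tB\neq\emptyset$, i.e.\ the $A$-bounded set $B$ is absorbing and $A\ll^*\Gamma$ fails.
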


\begin{proof} It is clear from its definition that $Z$ is a closed $\Gamma$-invariant subspace of $X_A$, and hence itself a $\Gamma$-space. Being a closed subset of the totally disconnected locally compact Hausdorff space $X_A$, $Z$ is also a totally disconnected locally compact Hausdorff space. By maximality of $U$, the action of $\Gamma$ on $Z$ is minimal. 

Freeness and amenability of an action of a group on a space pass to any $\Gamma$-invariant subspace.  As $\Gamma$ acts freely on $\beta \Gamma$ for all $\Gamma$ we conclude that (i) holds. Moreover, if $\Gamma$ is exact, then the action of $\Gamma$ on $\beta \Gamma$ is amenable  (see~\cite[Theorem 5.1.6]{BroOza:book}), so (iii) holds. 

Let us prove (ii). Assume that $A$ is paradoxical. Take a non-empty compact-open subset $K$ of $Z$. Then $K$ is $\Gamma$-full in $Z$ by minimality of the action. We can therefore use Lemma~\ref{lm:extending}(i) to find a compact-open subset $K'$ of $X_A$ which is $\Gamma$-full in $X_A$ and satisfies $K=K' \cap Z$. Put $B = K' \cap \Gamma$. Then $K_B = K'$ by Lemma~\ref{lem:KA}(iv), and $A \propto_\Gamma B  \propto_\Gamma A$ by Lemma~\ref{lm:clopen-paradoxical}. Hence $B$ is paradoxical by Lemma~\ref{lm:A-B-comp}(ii). Claim (c) (below Definition~\ref{def:pi-action}) then states that  $K'=K_B$ is $(X_A,\Gamma,\K)$-paradoxical, and Lemma~\ref{lm:extending}(ii) finally yields that $K$ is $(Z,\Gamma,\K)$-paradoxical.

(iv). Suppose that $Z$ is compact. As $Z \subseteq X_A = \bigcup_{t \in \Gamma} t.K_A$, it follows that there exists a finite set $S \subseteq \Gamma$ such that $Z \subseteq \bigcup_{s \in S} s.K_A = K_B$, where $B = \bigcup_{s \in S} sA$. For each finite set $T \subseteq \Gamma$ we have
$$Z = \bigcap_{t \in T} t.Z \subseteq \bigcap_{t \in T} t.K_B = K_{\bigcap_{t \in T} tB}.$$
This shows that $\bigcap_{t \in T} tB$ is non-empty, so $B$ is absorbing. Hence $A \ll^* \Gamma$ does not hold. 

Suppose, conversely, that $A \ll^* \Gamma$ does not hold. Find an absorbing subset $B$ of $\Gamma$ so that $B \propto A$. Then $K_B \subseteq X_A$ by Lemma~\ref{lm:A-B-comp}(i). Put
$W = \bigcap_{t \in \Gamma} t.K_B$.
Then $W$ is compact and $\Gamma$-invariant. Moreover, since
$$\bigcap_{s \in S} t.K_B = K_{\bigcap_{s \in S} sB} \ne \emptyset$$
for all finite sets $S \subseteq \Gamma$, we conclude that $W$ is non-empty. By the argument above the proposition we can find a minimal (non-empty) closed $\Gamma$-invariant subset $Z$ of $W$. As $W$ is compact, so is $Z$.
\end{proof}

\noindent It should be remarked that the space $Z$ from Proposition~\ref{prop:X_A} may not be metrizable and that the crossed product $C_0(Z) \rtimes_\red \Gamma$ accordingly may not  be separable. It follows from Proposition~\ref{prop:X_A} and from Proposition~\ref{prop:simple} that the crossed product $C_0(Z) \rtimes_\red \Gamma$ is simple and nuclear if $\Gamma$ is exact; and it follows from Proposition~\ref{prop:X_A} and Proposition~\ref{prop:pi-actions}  that the crossed product is simple, nuclear and purely infinite (but not necessarily a Kirchberg algebra) if $A$ moreover is paradoxical.

Recall from Lemma~\ref{lm:A-B-comp} that $X_A$ itself is compact if and only if $X_A = \beta \Gamma$. Clearly, any minimal closed $\Gamma$-invariant subset of $\beta \Gamma$ is compact. 
However, a minimal closed invariant subset $Z$ of $X_A$ can be compact even when $X_A$ is non-compact. This follows from  Proposition~\ref{prop:X_A}(iv) if we can find an absorbing subset $A$ of a group $\Gamma$ such that $X_A \ne \beta \Gamma$ (or equivalently, such that $\Gamma$ is not $A$-bounded). There are many such examples of $A \subseteq \Gamma$, eg.\  $\Gamma = \Z$ and $A = \N$.

Perhaps surprisingly it turns out to a subtle matter to decide when the space $Z$ is non-discrete. Clearly, if $A$ is non-empty and finite, then $Z = X_A = \Gamma$ and we are in the trivial situation of $\Gamma$ acting on itself. More generally, if $Z$ has an isolated point, then the $\Gamma$-space $Z$ is conjugate to $\Gamma$. (Indeed, if $x_0 \in Z$ is an isolated point, then  $Z = \Gamma.x_0$ by minimality of the action.)

The example below shows that $Z$ (in Proposition~\ref{prop:X_A}) can be discrete (and hence trivial) even when $A$ is infinite. More precisely, if $A \subseteq \Gamma$ is such that $sA \cap A$ is finite for all $e \ne s \in \Gamma$, then $Z$ is discrete. On the other hand, if $A$ is paradoxical, then $Z$ cannot be discrete by Proposition~\ref{prop:X_A}(ii).

\begin{example} \label{ex:discrete}
Let $\Gamma$ be a countably infinite group and suppose that $A \subseteq \Gamma$ is an infinite set such that $|sA \cap A| < \infty$ for all $s \in \Gamma \setminus \{e\}$. It was shown in Lemma~\ref{lm:smallA2} (and its proof) that each infinite group $\Gamma$ contains such a subset $A$. Let $1_A \in \ell^\infty(\Gamma) \subseteq \ell^\infty(\Gamma) \rtimes_\red \Gamma$ denote indicator function for $A$. 

Inside the Roe algebra, $c_0(\Gamma) \rtimes_\red \Gamma$ is the smallest non-zero ideal (every other non-zero ideal contains this ideal). If $A \subseteq \Gamma$ is as above, then the following holds in the quotient of the Roe algebra by  $c_0(\Gamma) \rtimes_\red \Gamma$:
\begin{equation} \label{eq:a}
\forall x \in \ell^\infty(\Gamma) \rtimes_\red \Gamma:  1_A \, x \, 1_A + c_0(\Gamma) \rtimes_\red \Gamma \; \in \; \ell^\infty(\Gamma) + c_0(\Gamma) \rtimes_\red \Gamma.
\end{equation}
In other words, if $\pi \colon  \ell^\infty(\Gamma) \rtimes_\red \Gamma \to \big(\ell^\infty(\Gamma)/c_0(\Gamma)\big) \rtimes_\red \Gamma$ denotes the quotient mapping, then $\pi(1_A)$ is an abelian projection. In particular, the corner algebra $1_A (\ell^\infty(\Gamma) \rtimes_\red \Gamma)1_A$ has a character. 

Let us see why \eqref{eq:a} holds. It suffices to establish \eqref{eq:a}  for $x$ in a dense subset of $\ell^\infty(\Gamma) \rtimes_\red \Gamma$, so we may assume that $x = \sum_{t \in F} f_t u_t$, where $F \subset \Gamma$ is finite, $f_t \in \ell^\infty(\Gamma)$, and $t \mapsto u_t$ is the canonical unitary representation of the action of $\Gamma$ on $\ell^\infty(\Gamma)$. Now,
$$1_Ax1_A = \sum_{t \in F} 1_Af_t  u_t1_A =  \sum_{t \in F} 1_A \, f_t \,1_{tA} u_t =  \sum_{t \in F} 1_{A \cap tA} \, f_t u_t.$$
But $1_{A \cap tA}  \in c_0(\Gamma)$ whenever $t \ne e$, so $\pi(1_Ax1_A) = \pi(1_A f_e)$ and $1_A f_e \in \ell^\infty(\Gamma)$.

Let $A\subseteq \Gamma$ be as above,  consider the open set $X_A$, and let $U$ be a maximal proper open $\Gamma$-invariant subset of $X_A$, cf.\ Definition~\ref{def:KA-XA}. Since $A$ is infinite and $\Gamma$ is dense in $\beta \Gamma$, we have $\Gamma \subseteq U \subset X_A$.  We claim that the minimal $\Gamma$-space $Z = X_A \setminus U$ is, in fact, discrete. More precisely, the compact-open subset $K_A \cap Z$  of $Z$ is a singleton, and hence an isolated point in $Z$.  
Indeed,  the natural epimorphism $C_0(X_A) \rtimes_\red \Gamma \to C_0(Z) \rtimes_\red \Gamma$ maps $1_{K_A}$ onto $1_{K_A \cap Z}$ and its kernel contains $c_0(\Gamma) \rtimes_\red \Gamma$. We can therefore deduce from \eqref{eq:a} that
$$\cA := 1_{K_A \cap Z} \big(  C_0(Z) \rtimes_\red \Gamma \big) 1_{K_A \cap Z}$$
is abelian. As $C_0(Z) \rtimes_\red \Gamma$ is a simple \Cs, so is $\cA$, whence $\cA  \cong \C$. As $C(K_A\cap Z)$ is a sub-\Cs{} of $\cA$ this implies that $K_A \cap Z$ is a singleton.
\end{example}

\section{Kirchberg algebras arising as crossed products by non-supramenable groups} \label{sec:Kirchberg}

\noindent We show here that every  non-supramenable countable group admits a free, minimal, purely infinite\footnote{In the sense of Definition~\ref{def:pi-action}} action on the locally compact, non-compact Cantor set; and that the action moreover can be chosen to be amenable if the group is exact.  The (reduced) crossed product \Cs{} associated with such an action will in the latter case be a stable Kirchberg algebra in the UCT class. Our construction is an adaption of the one from~\cite[Section 6]{RorSie:ETDS}, where it was shown that every exact non-amenable countable group admits a free, minimal, amenable action on the (compact) Cantor set such that the crossed product \Cs{} is a unital Kirchberg algebra in the UCT class. (One can easily modify the construction in~\cite{RorSie:ETDS} to make the action on the Cantor set purely infinite.) 

Fix a countable group $\Gamma$ and a non-empty subset $A$ of  $\Gamma$. Let $K_A \subseteq X_A \subseteq \beta \Gamma$ be as in Definition~\ref{def:KA-XA}.  Note that $C_0(X_A)$ is a $\Gamma$-invariant closed ideal in $C(\beta \Gamma)$, and the smallest such which contains the projection $1_{K_A}$. 
Sometimes we prefer to work with $\ell^\infty(\Gamma)$ rather than $C(\beta \Gamma)$. To avoid confusion we denote by $\cC_A$ the $\Gamma$-invariant ideal in $\ell^\infty(\Gamma)$ that corresponds to $C_0(X_A)$. Thus $\cC_A$  is the smallest $\Gamma$-invariant closed ideal in $\ell^\infty(\Gamma)$ which contains $1_A$.  

Fix an increasing sequence $\{F_n\}_{n=1}^\infty$ of finite subsets of $\Gamma$ with $\bigcup_{n=1}^\infty F_n = \Gamma$. Put $B_n = \bigcup_{t \in F_n} tA$, and put $p_n = 1_{B_n} \in \cC_A$. Then $\{p_n\}_{n=1}^\infty$ is an increasing approximate unit for $\cC_A$ consisting of projections. We shall use and refer to this approximate unit several times in the following.  

It is well-known that the canonical action of $\Gamma$ on $\beta \Gamma$ is free, and it was shown by Ozawa (see~\cite[Theorem 5.1.6]{BroOza:book}) that this action is amenable whenever $\Gamma$ is exact. The two lemmas below tell us how these properties can be preserved after passing to a suitable separable sub-\Cs{} of $\cC_A$. If $\cA$ is an abelian \Cs, then let $\widehat{\cA}$ denote its space of characters, so that   $\cA \cong C_0(\widehat{\cA})$. 

It was remarked in ~\cite{RorSie:ETDS} that if a group $\Gamma$ acts on a compact Hausdorff space $X$, and if there are projections $q_{j}^{(t)}$ in $C(X)$, for $t \in \Gamma \setminus \{e\}$ and for $j=1,2,3$, such that 
\begin{equation} \label{eq:q_t}
\alpha_t(q_j^{(t)}) \perp q_j^{(t)}, \qquad q_1^{(t)} + q_2^{(t)} + q_3^{(t)} = 1,
\end{equation}
for all $e \ne t \in \Gamma$ and $j=1,2,3$, then $\Gamma$ acts freely on $X$. 
(As below Definition~\ref{def:pi-action}, $t \mapsto \alpha_t$ denotes the induced action on $C(X)$ given by $\alpha_t(f)(x) = f(t^{-1}.x)$.) Consider now the case where $\Gamma$ acts on a \emph{non-compact} locally compact Hausdorff space $X$ and where $C_0(X)$ has an increasing approximate unit $\{p_n\}_{n=1}^\infty$ consisting of projections. It is then easy to see that $\Gamma$ acts freely on $X$ if there are projections $q_{j,n}^{(t)}$ in $C_0(X)$, for $t \in \Gamma \setminus \{e\}$, $j=1,2,3$ and $n \in \N$, such that 
\begin{equation} \label{eq:q_tn}
\alpha_t(q_{j,n}^{(t)}) \perp q_{j,n}^{(t)}, \qquad q_{1,n}^{(t)} + q_{2,n}^{(t)} + q_{3,n}^{(t)} = p_n,
\end{equation}
holds for all $e \ne t \in \Gamma$, $j=1,2,3$ and $n \in \N$.

\begin{lemma}[cf.\ {\cite[Lemma 6.3]{RorSie:ETDS}}]
\label{lm:free}
There is a countable subset $M'$ of $\cC_A$ such that if $\cA$ is any $\Gamma$-invariant sub-\Cs{} of $\cC_A$ which contains $M'$, then $\{p_n\}_{n=1}^\infty$ (defined above) is an approximate unit for $\cA$, and $\Gamma$ acts freely on the character space $\widehat{\cA}$ of $A$.
\end{lemma}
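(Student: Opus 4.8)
The plan is to extract, for each $e \neq t \in \Gamma$ and each $n$, a witnessing triple of projections realizing \eqref{eq:q_tn} inside $\cC_A$, and to throw all of these into the countable set $M'$. Since $\Gamma$ acts freely on $\beta\Gamma$, for each $e \neq t \in \Gamma$ there are projections $q_1^{(t)}, q_2^{(t)}, q_3^{(t)} \in C(\beta\Gamma) \cong \ell^\infty(\Gamma)$ with $\alpha_t(q_j^{(t)}) \perp q_j^{(t)}$ and $q_1^{(t)} + q_2^{(t)} + q_3^{(t)} = 1$; concretely, partition $\Gamma$ into three sets $S_1^{(t)}, S_2^{(t)}, S_3^{(t)}$ with $t S_j^{(t)} \cap S_j^{(t)} = \emptyset$ (possible because $\langle t\rangle$ acts freely on $\Gamma$, so one colours $\langle t\rangle$-orbits with three colours so that no element has the same colour as its $t$-translate), and take $q_j^{(t)} = 1_{S_j^{(t)}}$. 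Then set $q_{j,n}^{(t)} := p_n\, q_j^{(t)} = 1_{B_n \cap S_j^{(t)}}$. These are projections in $\cC_A$ (being dominated by $p_n \in \cC_A$), and they satisfy \eqref{eq:q_tn}: orthogonality $\alpha_t(q_{j,n}^{(t)}) \perp q_{j,n}^{(t)}$ is inherited because $\alpha_t(q_{j,n}^{(t)}) = \alpha_t(p_n)\alpha_t(q_j^{(t)}) \le \alpha_t(q_j^{(t)}) \perp q_j^{(t)} \ge q_{j,n}^{(t)}$; and summing over $j$ gives $p_n(q_1^{(t)}+q_2^{(t)}+q_3^{(t)}) = p_n$.

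I would then define $M' = \{p_n : n \in \N\} \cup \{q_{j,n}^{(t)} : e \neq t \in \Gamma,\ j \in \{1,2,3\},\ n \in \N\}$, which is countable since $\Gamma$ is countable. Now let $\cA$ be any $\Gamma$-invariant sub-$C^*$-algebra of $\cC_A$ containing $M'$. Since $\{p_n\}$ is an increasing approximate unit for $\cC_A$ and all $p_n$ lie in $\cA$, and $\cA \subseteq \cC_A$, the $p_n$ form an approximate unit for $\cA$ as well (for $a \in \cA \subseteq \cC_A$, $\|p_n a - a\| \to 0$). The $q_{j,n}^{(t)}$ lie in $\cA$ by hypothesis, so condition \eqref{eq:q_tn} holds with $C_0(X)$ replaced by $\cA$ and $\widehat{\cA}$ in place of $X$. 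By the remark preceding the lemma (the non-compact analogue of the Rørdam--Sierakowski freeness criterion), $\Gamma$ acts freely on $\widehat{\cA}$.

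The one point that needs a little care — and the likeliest place to stumble — is verifying that the freeness criterion of \eqref{eq:q_tn} genuinely applies to $\widehat{\cA}$: one must check that the $\alpha_t$ really restrict to automorphisms of $\cA$ (immediate from $\Gamma$-invariance of $\cA$), that $\widehat{\cA}$ is a genuine $\Gamma$-space whose topology and action are compatible with the abstract picture, and that a projection $q \in \cA$ with $\alpha_t(q) \perp q$ corresponds to a compact-open set $\widehat{q} \subseteq \widehat{\cA}$ with $t.\widehat{q} \cap \widehat{q} = \emptyset$, so that the union over $n$ of the three families $\widehat{q_{j,n}^{(t)}}$ covers the fixed-point set of $t$ and forces it to be empty. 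This is exactly the content of the cited remark applied verbatim, so no new argument is required; one simply notes that $\widehat{\cA} = \bigcup_n \{\,\widehat{p_n}\,\}$ as an increasing union of compact-open sets and that on each $\widehat{p_n}$ the three sets $\widehat{q_{j,n}^{(t)}}$ partition it into pieces moved off themselves by $t$, ruling out any $t$-fixed point in $\widehat{p_n}$, hence in all of $\widehat{\cA}$.
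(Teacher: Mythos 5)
Your proposal is correct and follows essentially the same route as the paper: both take $q_{j,n}^{(t)}=p_n q_j^{(t)}$ for a triple of projections witnessing freeness on $\beta\Gamma$ and put these into $M'$, the only cosmetic differences being that you inline the three-colouring construction that the paper imports from \cite[Corollary 6.2]{RorSie:ETDS}, and that you add the $p_n$ to $M'$ explicitly where the paper observes they already lie in $\cA$ via the identity $q_{1,n}^{(t)}+q_{2,n}^{(t)}+q_{3,n}^{(t)}=p_n$.
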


\begin{proof}  By~\cite[Corollary 6.2]{RorSie:ETDS} we can find projections 
$q_{j}^{(t)}$ in $\ell^\infty(\Gamma)$ for $e \ne t \in \Gamma$ and  $j=1,2,3$, such that \eqref{eq:q_t} holds. The projections
$$q_{j,n}^{(t)}= q_{j}^{(t)}p_n, \qquad e \ne t \in \Gamma, \quad j=1,2,3, \quad n \in \N,$$ 
satisfy \eqref{eq:q_tn}. 
As $p_n$ belongs to $\cC_A$ for each $n$, so does each $q_{j,n}^{(t)}$. Let 
$$M'= \big\{q_{j,n}^{(t)} \mid t \in \Gamma, \; n \in \N,  \; j=1,2,3\big\} \subseteq \cC_A,$$
and let $\cA$ be any $\Gamma$-invariant sub-\Cs{} of $\cC_A$ that contains $M'$.  Then each $p_n$ belongs to $\cA$ by \eqref{eq:q_tn}. Moreover,  $\{p_n\}_{n=1}^\infty$ is an approximate unit for $\cA$ because it is an approximate unit for $\cC_A$. Finally, $\Gamma$ acts freely on $\widehat{\cA}$ because \eqref{eq:q_tn} holds.
\end{proof}

\begin{lemma}
\label{lm:amenable} Suppose that $\Gamma$ is an exact countable group. Then
there is a countable subset $M''$ of $\cC_A$ such that if $\cA$ is any $\Gamma$-invariant sub-\Cs{} of $\cC_A$ which contains $M''$, then $\Gamma$ acts amenably on $\widehat{\cA}$.
\end{lemma}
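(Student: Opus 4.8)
The plan is to mimic the structure of Lemma~\ref{lm:free}: extract from the known amenability of the $\Gamma$-action on $\beta\Gamma$ (equivalently on $\ell^\infty(\Gamma)$, valid since $\Gamma$ is exact by~\cite[Theorem 5.1.6]{BroOza:book}) a \emph{countable} certificate of amenability living inside $\cC_A$, and then observe that any $\Gamma$-invariant sub-\Cs{} $\cA$ containing this certificate inherits amenability of its character action. The key point is that amenability of an action in the sense of Anantharaman-Delaroche can be witnessed by a sequence of positive-type functions with finite support in the group variable: there exist $h_k \in C_c(\Gamma, \ell^\infty(\Gamma))$ (i.e.\ finitely supported functions $t\mapsto h_k(t)\in\ell^\infty(\Gamma)$, each $h_k(t)$ positive) such that $\sum_{t} h_k(t)^* h_k(t) \le 1$, $\sum_t h_k(t)^* h_k(t) \to 1$ strictly, and for each $s\in\Gamma$, $\sup_{x}\big|\sum_t h_k(s^{-1}t)(s^{-1}.x)\,h_k(t)(x) - \sum_t h_k(t)(x)^2\big|\to 0$ — i.e.\ the standard Reiter-type condition for amenable actions. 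Since $\Gamma$ is countable, one may take countably many such $h_k$, and each $h_k$ involves only finitely many elements $h_k(t)\in\ell^\infty(\Gamma)$.

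First I would record the precise form of the amenability condition I want to use; the cleanest is: the action of $\Gamma$ on a locally compact $X$ (or on a commutative \Cs{} $\cD$) is amenable iff there is a net, and when $\Gamma$ is countable a sequence, $(\xi_k)$ in $C_c(\Gamma,\cD_+)$ with $\|\sum_t \xi_k(t)\| \le 1$, $\sum_t \xi_k(t) \to 1$ in the strict topology, and $\|\xi_k - s\cdot\xi_k\|_{1} \to 0$ for each $s\in\Gamma$, where $(s\cdot\xi_k)(t)=\alpha_s(\xi_k(s^{-1}t))$ and the $1$-norm is $\|\eta\|_1 = \|\sum_t \eta(t)\|$. (This is the form used by Anantharaman-Delaroche and in~\cite[Section 4.3]{BroOza:book}; for the non-unital case one works relative to the approximate unit $\{p_n\}$, replacing $1$ by $p_n$ and requiring $\sum_t\xi_k(t)\to p_n$, exactly paralleling the passage from \eqref{eq:q_t} to \eqref{eq:q_tn}.) Applying this to the amenable action on $\ell^\infty(\Gamma)$, choose such a sequence $(\xi_k)$ with values in $\ell^\infty(\Gamma)_+$; cut down by the $p_n$: set $\xi_{k,n}(t) = p_n^{1/2}\,\xi_k(t)\,p_n^{1/2} = \xi_k(t)p_n$ (all commute, as $\ell^\infty(\Gamma)$ is abelian), which lie in $\cC_A$ since $p_n\in\cC_A$ and $\cC_A$ is an ideal. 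Then let
$$M'' = \big\{\, \xi_{k,n}(t) \;:\; k\in\N,\ n\in\N,\ t\in\Gamma\,\big\}\ \cup\ \{p_n : n\in\N\},$$
a countable subset of $\cC_A$.

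Then I would verify that any $\Gamma$-invariant sub-\Cs{} $\cA \se \cC_A$ containing $M''$ has $\Gamma\curvearrowright\widehat\cA$ amenable. Since $M''\supseteq\{p_n\}$ (or these are recovered from $M''$ via the earlier lemma's relations; in any case it is harmless to throw $\{p_n\}$ into $M''$), $\{p_n\}$ is an approximate unit for $\cA$, and $\cA$ is an ideal-completion-type object inside $\cC_A$. Each $\xi_{k,n}$ is a finitely supported $\cA_+$-valued function on $\Gamma$; the inequalities $\sum_t\xi_{k,n}(t)\le p_n$, the strict convergence $\sum_t \xi_{k,n}(t)\to p_n$ as $k\to\infty$, and the asymptotic invariance $\|\xi_{k,n}-s\cdot\xi_{k,n}\|_1\to 0$ all hold in $\cA$ because they held in $\cC_A$ (they are norm statements about elements already in $\cA$, and $\Gamma$-invariance of $\cA$ ensures $s\cdot\xi_{k,n}$ again takes values in $\cA$). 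Combining over $n$ (diagonalize: for each $\ep$ and each finite $S\se\Gamma$ and each finite $F\se\cA$, pick $n$ large so that $p_n$ nearly fixes $F$, then $k$ large) produces the required net for $\cA$ itself, hence $\Gamma\curvearrowright\widehat\cA$ is amenable.

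The main obstacle is bookkeeping rather than conceptual: making sure the Reiter-type approximate-invariance condition I pull off $\ell^\infty(\Gamma)$ really does descend after the double cut-down and the diagonalization over the approximate unit $\{p_n\}$, and that the finite supports stay finite (they do, since multiplying each $\xi_k(t)$ by $p_n$ does not enlarge the support in $t$). A secondary subtlety is that amenability in the non-unital setting must be phrased relative to an approximate unit consisting of projections — but this is precisely the device already introduced before Lemma~\ref{lm:free} via \eqref{eq:q_tn}, so I would simply cite that discussion and the standard equivalence (e.g.~\cite[Section 4.3, Section 5.1]{BroOza:book}, \cite[Definition 2.1]{Ana:TAMS2002}) that amenability of $\Gamma\curvearrowright\widehat\cA$ is detected on any generating $\Gamma$-invariant family. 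Hence $M''$ as above works.
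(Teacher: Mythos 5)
Your strategy is essentially the paper's: both proofs extract a countable certificate of amenability of $\Gamma\curvearrowright\beta\Gamma$ (available because $\Gamma$ is exact, \cite[Theorem 5.1.6]{BroOza:book}), localize it into $\cC_A$ by cutting down with compact-open pieces, and take $M''$ to be the resulting countable family, so that any $\Gamma$-invariant sub-\Cs{} $\cA\supseteq M''$ inherits the certificate. The paper uses a net (countable, since $\Gamma$ is countable and $X_A$ is $\sigma$-compact) of approximately invariant continuous means $m_i\colon\beta\Gamma\to\mathrm{Prob}(\Gamma)$ as in \cite[Definition 2.1]{Ana:TAMS2002}, puts into $M''$ the restrictions of the functions $x\mapsto m_i^x(\{t\})$ to the translates $s.K_A$, and phrases the verification as: $\cA$ separates any two points of $X_A$ separated by the means, hence the means factor through the quotient $\widehat{\cA}$ of $X_A$ and witness amenability there. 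Your Reiter-type functions $\xi_k(t)$ are these same objects in different clothing, and ``the certificate consists of elements of $\cA$'' is the same observation as ``the means descend to $\widehat{\cA}$''; so the two arguments differ only in bookkeeping.

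One step of your write-up is not literally correct as stated: the cut-downs $\xi_{k,n}(t)=\xi_k(t)p_n$ do \emph{not} satisfy $\|\xi_{k,n}-s\cdot\xi_{k,n}\|_1\to 0$ in the global sup-norm for fixed $n$, because $(s\cdot\xi_{k,n})(t)=(s\cdot\xi_k)(t)\,\alpha_s(p_n)$ and $\alpha_s(p_n)=1_{sB_n}\neq p_n$; on the compact-open set $K_{B_n}\setminus s.K_{B_n}$ the difference remains of size roughly $1$ no matter how large $k$ is. So ``they are norm statements that held in $\cC_A$'' does not apply to the cut-downs. What you do get, and all that the definition of an amenable action on a non-compact space requires, is approximate invariance uniform on compact subsets of $\widehat{\cA}$: every compact $L$, together with its finitely many relevant translates, lies in the support of some $p_n$, and there $\xi_{k,n}$ and $s\cdot\xi_{k,n}$ agree with $\xi_k$ and $s\cdot\xi_k$, so the estimate is inherited. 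With the invariance condition stated in this locally uniform form (and the $\ell^1$-norm taken as $\sup_x\sum_t|\cdot|$ rather than $\|\sum_t(\cdot)\|$, to avoid cancellation), your diagonalization over $n$ goes through and the proof is complete.
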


\begin{proof} The action of $\Gamma$ on $\beta \Gamma$ is amenable, cf.~\cite[Theorem 5.1.6]{BroOza:book}. Let $(m_i)_{i \in I}$ be a net of approximately invariant continuous means
$m_i \colon \beta \Gamma \to \mathrm{Prob}(\Gamma)$ that witnesses the amenability of this action, cf.\ Definition~2.1 in~\cite{Ana:TAMS2002}. Let $\widetilde{m}_i$ be the restriction of $m_i$ to $X_A \subseteq \beta \Gamma$. Then $(\widetilde{m}_i)_{i \in I}$ is a net of  approximate invariant continuous means $\widetilde{m}_i \colon X_A \to \mathrm{Prob}(\Gamma)$ that witnesses the amenability of the action of $\Gamma$ on $X_A$. As $\Gamma$ is countable and $X_A$ is $\sigma$-compact, the approximate mean $(\widetilde{m}_i)_{i \in I}$ can be chosen to be countable.

The spectrum $Y:= \widehat{\cA}$ of a $\Gamma$-invariant sub-\Cs{} $\cA$ of $\cC_A$ is a $\Gamma$-invariant quotient of the space $X_A = \widehat{\cC_A}$. We wish to make sure that each of the functions $\widetilde{m}_i$ passes to this quotient to yield a continuous function $\widehat{m}_i \colon Y \to \mathrm{Prob}(\Gamma)$, in which case the net $(\widehat{m}_i)_{ \in I}$  will witness that $\Gamma$ acts amenably on $Y$. This will happen if for all $x,y \in X_A$, for which $\widetilde{m}_i^x \ne \widetilde{m}_i^y$ for some $i$, there exists $f \in \cA$ such that $f(x) \ne f(y)$.

For each $i$ and for each $t \in \Gamma$ let  $f_{i,t} \colon X_A \to \R$ be the  continuous function given by $f_{i,t}(x) = \widetilde{m}_i^x(\{t\})$, $x \in X_A$. For each $s \in \Gamma$ let $f_{i,t,s}$ be the restriction of $f_{i,t}$ to the compact-open subset $s.K_A$ of $X_A$.  The countable set 
$$M'' = \big\{f_{i,t,s} \mid i \in I, \; s,t \in \Gamma\big\},$$ will then separate any pair of points $x,y \in X_A$ that are separated by the $\widetilde{m}_i$'s.
\end{proof}

\noindent Recall the definition of paradoxicality of projections from Section~\ref{sec:C*}. 

\begin{lemma}[cf.\ {\cite[Lemma 6.6]{RorSie:ETDS}}]  \label{lm:prop-inf}
For each projection $p \in \cC_A$, which is $(\ell^\infty(\Gamma),\Gamma)$-paradoxical, there is a finite set $M_p \subseteq \cC_A$ such that whenever $\cA$ is a $\Gamma$-invariant sub-\Cs{} of $\cC_A$ which contains $M_p \cup \{p\}$, then $p$ is $(\cA,\Gamma)$-paradoxical. 
\end{lemma}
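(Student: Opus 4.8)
The plan is to unwind the definition of $(\ell^\infty(\Gamma),\Gamma)$-paradoxicality and observe that it involves only finitely many pieces of data, all of which can be taken to live inside $\cC_A$; the set $M_p$ will simply be the collection of all those pieces. Concretely, since $p\in\cC_A$ is $(\ell^\infty(\Gamma),\Gamma)$-paradoxical, there are pairwise orthogonal subprojections $p_1,\dots,p_{n+m}$ of $p$ in $\ell^\infty(\Gamma)$ and elements $t_1,\dots,t_{n+m}\in\Gamma$ with
\[
p=\sum_{k=1}^n\alpha_{t_k}(p_k)=\sum_{k=n+1}^{n+m}\alpha_{t_k}(p_k).
\]
First I would check that each $p_k$ in fact lies in $\cC_A$: since $p_k\le p$ and $p\in\cC_A$, and $\cC_A$ is an ideal in $\ell^\infty(\Gamma)$, we get $p_k=p_k p\in\cC_A$. (Equivalently, in set language, $p_k=1_{A_k}$ with $A_k\subseteq$ the support of $p$, which is $A$-bounded, hence $A_k$ is $A$-bounded.) Consequently each $\alpha_{t_k}(p_k)$ also lies in the $\Gamma$-invariant ideal $\cC_A$.

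Then I would set
\[
M_p=\{p_1,\dots,p_{n+m}\}\subseteq\cC_A,
\]
a finite set. Now let $\cA$ be any $\Gamma$-invariant sub-\Cs{} of $\cC_A$ containing $M_p\cup\{p\}$. Since $\cA$ is $\Gamma$-invariant, it contains $\alpha_{t_k}(p_k)$ for every $k$; since it contains all the $p_k$ and $p$, the witnessing equations above are equations in $\cA$, and the $p_k$ are still pairwise orthogonal subprojections of $p$ inside $\cA$ (orthogonality and the subprojection relation are intrinsic). Hence the very same data $(p_k,t_k)$ witnesses that $p$ is $(\cA,\Gamma)$-paradoxical, which is exactly the conclusion.

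There is essentially no obstacle here: the lemma is a routine ``the witness is finite, so throw it into $M_p$'' argument, of the same flavour as Lemmas~\ref{lm:free} and~\ref{lm:amenable} but easier, since paradoxicality of a \emph{fixed} projection $p$ (as opposed to all projections, or all points of a spectrum being separated) is witnessed by genuinely finite data rather than a countable family. The only points worth stating carefully are (a) that subprojections of $p$ automatically lie in the ideal $\cC_A$, so that $M_p\subseteq\cC_A$ as required, and (b) that the relations defining $(\cA,\Gamma)$-paradoxicality — pairwise orthogonality, being a subprojection of $p$, and the two sum identities — are all preserved when one passes from $\ell^\infty(\Gamma)$ to the sub-\Cs{} $\cA$ containing the relevant elements. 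Both are immediate, so the proof is a short paragraph.
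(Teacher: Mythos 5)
Your proof is correct and is essentially identical to the paper's: the authors likewise observe that the finitely many witnessing projections $p_j$ from \eqref{eq:A-paradoxical} automatically lie in the closed ideal $\cC_A$ and take $M_p$ to be exactly that set. Nothing further is needed.
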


\begin{proof} Note first that if $p$ in  $\cC_A$ is $(\ell^\infty(\Gamma),\Gamma)$-paradoxical, then it is also $(\cC_A,\Gamma)$-paradoxical. Indeed, the projections $p_j \in \ell^\infty(\Gamma)$ that witness that $p$ is $(\ell^\infty(\Gamma),\Gamma)$-paradoxical, cf.\ \eqref{eq:A-paradoxical}, will automatically belong to the closed ideal $\cC_A$ in $\ell^\infty(\Gamma)$. Accordingly we can let $M_p$ consist of the finitely many projections $p_j$ from \eqref{eq:A-paradoxical}.
\end{proof}

\begin{lemma} \label{lm:paradox-proj} Each $\Gamma$-full projection in $\cC_A$ is $(\ell^\infty(\Gamma),\Gamma)$-paradoxical if $A \subseteq \Gamma$ is paradoxical.
\end{lemma}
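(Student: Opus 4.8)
The plan is to translate the question about $\Gamma$-full projections in $\cC_A$ into the language of subsets of $\Gamma$, reducing everything to the hypothesis that $A$ is paradoxical together with the comparison lemmas already established. Let $p$ be a $\Gamma$-full projection in $\cC_A$. Since $\cC_A$ is the smallest $\Gamma$-invariant closed ideal in $\ell^\infty(\Gamma)$ containing $1_A$, and since $p$ is a projection in $\cC_A$, it is natural to replace $p$ by a projection of the form $1_B$ for a suitable $B \subseteq \Gamma$. The obstruction here is that a general projection in $\cC_A$ need not be of the form $1_B$; however, by the correspondence between projections in $\ell^\infty(\Gamma)$ and subsets of $\Gamma$, we do have $p = 1_B$ for some $B \subseteq \Gamma$, and $p \in \cC_A$ forces $K_B \subseteq X_A$, i.e. $B \propto_\Gamma A$ by Lemma~\ref{lm:A-B-comp}(i). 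So the first step is: write $p = 1_B$ with $B \propto_\Gamma A$.

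Next I would unpack what ``$\Gamma$-full in $\cC_A$'' means: the closed ideal generated by $p$ in $\cC_A$ is all of $\cC_A$. In the picture of $X_A$, the projection $1_B \in C_0(X_A)$ generates $C_0(X_A)$ as an ideal precisely when $\bigcup_{t \in \Gamma} t.K_B = X_A$, i.e. when $K_B$ is a $\Gamma$-full open subset of $X_A$ in the sense defined before Lemma~\ref{lm:extending}. By Lemma~\ref{lm:clopen-paradoxical}, this is equivalent to $A \propto_\Gamma B \propto_\Gamma A$. (The relation $B \propto_\Gamma A$ is already known from the previous paragraph; fullness supplies the reverse relation $A \propto_\Gamma B$.)

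Now the hypothesis enters: $A$ is paradoxical. Since $A \propto_\Gamma B \propto_\Gamma A$, Lemma~\ref{lm:A-B-comp}(iii) gives that $B$ is paradoxical as well. By claim~(c) below Definition~\ref{def:pi-action}, $B$ being paradoxical is equivalent to $1_B$ being $(\ell^\infty(\Gamma),\Gamma)$-paradoxical. That is exactly the conclusion we want, since $p = 1_B$. So the logical skeleton is: $\Gamma$-full projection $p$ in $\cC_A$ $\Longleftrightarrow$ $p = 1_B$ with $A \propto_\Gamma B \propto_\Gamma A$ $\Longrightarrow$ $B$ paradoxical $\Longleftrightarrow$ $p$ is $(\ell^\infty(\Gamma),\Gamma)$-paradoxical.

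The main obstacle, such as it is, is purely bookkeeping: making precise the identification of ``the closed $\Gamma$-invariant ideal generated by $1_B$ inside $\cC_A$'' with ``$\Gamma$-fullness of $K_B$ in $X_A$'', and hence invoking Lemma~\ref{lm:clopen-paradoxical} correctly. One should note that $\cC_A$ corresponds to $C_0(X_A)$ under the natural isomorphism $\ell^\infty(\Gamma) \cong C(\beta\Gamma)$, that a projection $1_B \in \cC_A$ corresponds to $1_{K_B}$ with $K_B \subseteq X_A$, and that the (closed, two-sided, automatically $\Gamma$-invariant once we ask for $\Gamma$-fullness) ideal generated by $1_{K_B}$ in $C_0(X_A)$ is $C_0\big(\bigcup_{t\in\Gamma} t.K_B\big)$, which equals $C_0(X_A)$ exactly when $\bigcup_{t\in\Gamma} t.K_B = X_A$. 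Everything else is a direct appeal to the lemmas already proved, so no genuinely hard step remains.
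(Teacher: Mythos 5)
Your proof is correct and follows essentially the same route as the paper's: write the projection as $1_B$, use $\Gamma$-fullness together with Lemma~\ref{lm:clopen-paradoxical} to get $A \propto_\Gamma B \propto_\Gamma A$, deduce that $B$ is paradoxical via Lemma~\ref{lm:A-B-comp}(iii), and conclude by claim~(c). The extra bookkeeping you supply (identifying the $\Gamma$-invariant ideal generated by $1_{K_B}$ in $C_0(X_A)$ with $C_0\bigl(\bigcup_{t\in\Gamma} t.K_B\bigr)$) is accurate and only makes explicit what the paper leaves implicit.
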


\begin{proof} Take a $\Gamma$-full projection in $\cC_A \subseteq \ell^\infty(\Gamma)$ and write it as $1_B$ where $B \subseteq \Gamma$. Identify $1_B$ with the $\Gamma$-full projection $1_{K_B}$ in $C_0(X_A)$. Then $K_B$ is $\Gamma$-full in $X_A$, and so $A \propto_\Gamma B \propto_\Gamma A$ by Lemma~\ref{lm:clopen-paradoxical}. Hence $B$ is paradoxical by Lemma~\ref{lm:A-B-comp}(iii), so  $1_B$ is a paradoxical projection, cf.\ claim (c) below Definition~\ref{def:pi-action}.
\end{proof}

\begin{lemma}[cf.\ {\cite[Lemma 6.7]{RorSie:ETDS}}] \label{lm:projection}
Let $T$ be a countable subset of $\cC_A$. Then there is a countable $\Gamma$-invariant set $Q$ consisting of projections in $\cC_A$ such that $T \subseteq C^*(Q)$.
\end{lemma}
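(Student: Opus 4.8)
The plan is to build the $\Gamma$-invariant set $Q$ by a back-and-forth saturation argument, starting from a countable subset of $\cC_A$ and repeatedly replacing elements by projections that generate them inside $\cC_A$, and then closing up under the $\Gamma$-action. The key point is that $\cC_A$ is an abelian $C^*$-algebra with a nice structure: it is a $\Gamma$-invariant ideal in $\ell^\infty(\Gamma) \cong C(\beta\Gamma)$, and $\beta\Gamma$ is Stonian, so $\ell^\infty(\Gamma)$ (and hence $\cC_A$) has an abundance of projections; in fact the projections in $\ell^\infty(\Gamma)$ are exactly the indicator functions $1_E$, $E \subseteq \Gamma$, and those lying in $\cC_A$ are exactly the $1_E$ with $E \propto_\Gamma A$, i.e.\ $E$ is $A$-bounded, cf.\ Lemma~\ref{lm:A-B-comp}(i). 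In particular, since $\ell^\infty(\Gamma)$ has real rank zero, every self-adjoint element of $\cC_A$ can be approximated in norm by finite real-linear combinations of projections in $\cC_A$.

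First I would reduce to the case of a single element: it suffices to show that for each $f \in \cC_A$ there is a countable set $Q_f$ of projections in $\cC_A$ with $f \in C^*(Q_f)$, and then take $Q_0 = \bigcup_{f \in T} Q_f$, which is countable since $T$ is. Second, for a fixed $f$, write $f = \mathrm{Re}(f) + i\,\mathrm{Im}(f)$ and handle each self-adjoint part separately; for a self-adjoint $g \in \cC_A$, use real rank zero of $\ell^\infty(\Gamma)$ — noting that the approximating projections automatically lie in the ideal $\cC_A$, since $g \in \cC_A$ and the relevant spectral projections of $g$ (cut-downs away from $0$) belong to the closed ideal generated by $g$ — to produce a sequence $g_k \to g$ where each $g_k$ is a finite real-linear combination of projections from $\cC_A$; collect all those projections into a countable set $Q_g$, so that $g \in \overline{C^*(Q_g)} = C^*(Q_g)$. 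Third, and this is the only genuinely $\Gamma$-equivariant step, close up under the action: given a countable set $Q_0$ of projections in $\cC_A$, the set $Q = \{\alpha_t(q) : t \in \Gamma,\ q \in Q_0\}$ is still countable (as $\Gamma$ is countable), still consists of projections in $\cC_A$ (since $\cC_A$ is $\Gamma$-invariant), is by construction $\Gamma$-invariant, and contains $Q_0$, hence $T \subseteq C^*(Q_0) \subseteq C^*(Q)$.

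The one place that needs a little care — and what I would flag as the main (though modest) obstacle — is making sure the projections produced by the real-rank-zero approximation actually land inside the \emph{ideal} $\cC_A$ rather than merely inside $\ell^\infty(\Gamma)$. This is handled by observing that for $g \geq 0$ in $\cC_A$ and $\ep > 0$, the cut-down $(g-\ep)_+$ lies in $\cC_A$ and its support projection $1_{\{g > \ep\}}$ is dominated by $g^{1/n}$ for suitable $n$ (or more simply is a continuous-functional-calculus image $h(g)$ vanishing near $0$), hence lies in $\cC_A$; and one can write $g$ as a norm-limit of finite combinations $\sum_j \lambda_j^{(k)} 1_{\{g > \ep_j^{(k)}\}}$ of such support projections. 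Since $\beta\Gamma$ is totally disconnected this spectral-projection decomposition is exact in the strong sense one expects for real rank zero. After this, all remaining steps are bookkeeping: countability of $Q$ from countability of $\Gamma$ and $T$, $\Gamma$-invariance by construction, and the inclusion $T \subseteq C^*(Q)$ from $T \subseteq C^*(Q_0)$ and $Q_0 \subseteq Q$.
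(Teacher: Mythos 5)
Your proposal is correct and follows essentially the same route as the paper, whose proof simply defers to \cite[Lemma~6.7]{RorSie:ETDS} together with the observation that $\cC_A$ has real rank zero (being a closed two-sided ideal in the real rank zero \Cs{} $\ell^\infty(\Gamma)$). Your write-up just fills in the details of that cited argument — approximating each element of $T$ by combinations of indicator projections that land in the ideal $\cC_A$, then closing the resulting countable projection set under the countable group $\Gamma$ — so there is nothing genuinely different to compare.
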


\begin{proof} The proof is identical with the proof of~\cite[Lemma 6.7]{RorSie:ETDS}. Use that $\cC_A$ is of real rank zero (being a closed two-sided ideal in the real rank zero \Cs{} $\ell^\infty(\Gamma)$).
\end{proof}

\noindent If the group $\Gamma$ acts on a \Cs{} $\cA$ and if $p \in \cA$ is a projection, then we say that $p$ is \emph{$\Gamma$-full} if $p$ is not contained in any proper closed two-sided $\Gamma$-invariant ideal in $\cA$. 

In the rest of this section we let $t \mapsto \alpha_t$, $t \in \Gamma$, denote the canonical actions of $\Gamma$ on $\ell^\infty(\Gamma)$, respectively, on $C(\beta \Gamma)$, as well as on all their $\Gamma$-invariant sub-\Cs s.

\begin{proposition} \label{prop:c}
Let $\Gamma$ be a non-supramenable group, and let $A$ be a subset of $\Gamma$. As above, let $\mathcal{C}_A$ be the smallest closed $\Gamma$-invariant ideal in $\ell^\infty(\Gamma)$ that contains $1_A$. It follows that there is a separable, $\Gamma$-invariant sub-\Cs{} $\mathcal{A}$ of $\mathcal{C}_A$ such that
\begin{enumerate}
\item $1_A \in \ell^\infty(\Gamma)$ is a $\Gamma$-full projection  in $\cA$.
\item $\mathcal{A}$ is generated as a \Cs{} by its projections. 
\item Every projection in $\mathcal{A}$, which is $(\ell^\infty(\Gamma),\Gamma)$-paradoxical, is also $(\mathcal{A}, \Gamma)$-paradoxical,
\item $\Gamma$ acts freely on the character space $\widehat{\cA}$ of $A$.
\end{enumerate}
Furthermore, if $\Gamma$ is exact, then $\mathcal{A}$ can be chosen such that $\Gamma$ acts amenably on $\widehat{\cA}$.
\end{proposition}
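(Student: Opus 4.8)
The plan is to build $\cA$ as the $C^*$-algebra generated by a single countable $\Gamma$-invariant family of projections in $\cC_A$, assembled so that all four requirements — plus amenability in the exact case — are forced simultaneously. The strategy is entirely a ``catch all the witnesses'' argument in the spirit of \cite[Section 6]{RorSie:ETDS}, using the preparatory Lemmas~\ref{lm:free}, \ref{lm:amenable}, \ref{lm:prop-inf}, \ref{lm:paradox-proj} and \ref{lm:projection} that have been set up precisely for this purpose.

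\textbf{Step 1: initial countable data.} First I collect the ``small'' countable subsets of $\cC_A$ produced by the structural lemmas: the set $M'$ from Lemma~\ref{lm:free} (ensuring freeness of the action on the spectrum, and that $\{p_n\}$ is an approximate unit), and, when $\Gamma$ is exact, the set $M''$ from Lemma~\ref{lm:amenable} (ensuring amenability of the action on the spectrum). I also throw in the projection $1_A$ itself and the approximate unit $\{p_n\}_{n=1}^\infty$. Since $\Gamma$ is countable, the $\Gamma$-invariant $C^*$-algebra generated by any countable set is again separable and can be enumerated, so all of this stays within a separable $\Gamma$-invariant subalgebra.

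\textbf{Step 2: the iterative enlargement for paradoxicality (iii).} This is the step that requires genuine care, because condition (iii) is self-referential: it quantifies over \emph{all} projections of the final algebra $\cA$, which is not yet built. The remedy is the standard Cantor-style back-and-forth/inductive construction. Build an increasing sequence $\cA_0 \subseteq \cA_1 \subseteq \cA_2 \subseteq \cdots$ of separable $\Gamma$-invariant sub-$C^*$-algebras of $\cC_A$, starting with $\cA_0 = C^*$ of the data from Step~1. At stage $k$, use Lemma~\ref{lm:projection} to replace $\cA_k$ by a (still separable, $\Gamma$-invariant) algebra generated by a countable $\Gamma$-invariant set of projections containing a dense sequence of $\cA_k$; then enumerate those (countably many) projections $p$ that are $(\ell^\infty(\Gamma),\Gamma)$-paradoxical, and for each adjoin the finite witness set $M_p$ from Lemma~\ref{lm:prop-inf}, obtaining $\cA_{k+1}$. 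Set $\cA = \overline{\bigcup_k \cA_k}$. This is separable and $\Gamma$-invariant. Condition (ii) holds because each $\cA_{k+1}$ (hence the union) is generated by projections — this is exactly why the Lemma~\ref{lm:projection} step is interleaved at every stage. Condition (iii) holds by the usual ``a paradoxical projection of $\cA$ is approximated by, and by real rank zero actually equal up to unitary equivalence to, a projection living in some $\cA_k$'' argument: any projection of $\cA$ is close to a projection of some $\cA_k$, hence (after conjugating by a unitary close to $1$, or directly since the relevant $\cA_k$ contains a dense sequence of projections) is a projection of some $\cA_k$, its paradoxicality witnesses $p_j$ from \eqref{eq:A-paradoxical} were caught at stage $k{+}1$, so it is $(\cA,\Gamma)$-paradoxical. (One must be slightly careful that ``being $(\ell^\infty(\Gamma),\Gamma)$-paradoxical'' is detected at the level of $\cA_k$ and that Murray--von Neumann equivalent projections are simultaneously paradoxical or not — both are routine.)

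\textbf{Step 3: the remaining conditions.} Since $\cA \supseteq \cA_0 \supseteq M'$ and $\cA$ is $\Gamma$-invariant with $\{p_n\}$ an approximate unit, Lemma~\ref{lm:free} gives that $\Gamma$ acts freely on $\widehat{\cA}$, which is (iv). If $\Gamma$ is exact, then $\cA \supseteq M''$ gives, by Lemma~\ref{lm:amenable}, that $\Gamma$ acts amenably on $\widehat{\cA}$, which is the final clause. For (i): $1_A \in \cA_0 \subseteq \cA$, and it is $\Gamma$-full in $\cA$ because $\cC_A$ is by definition the \emph{smallest} $\Gamma$-invariant closed ideal of $\ell^\infty(\Gamma)$ containing $1_A$ — any proper closed $\Gamma$-invariant ideal $\cI$ of $\cA$ not containing $1_A$ would, via the $\Gamma$-invariant ideal it generates in $\ell^\infty(\Gamma)$ (which still cannot contain $1_A$, since intersecting back with $\cA$ recovers $\cI$ by invariance and the approximate unit), contradict minimality of $\cC_A$; more directly, any $\Gamma$-invariant ideal of $\cA$ containing $1_A$ contains all $\alpha_t(1_A) = 1_{tA}$, hence all $p_n = 1_{B_n}$, hence (as $\{p_n\}$ is an approximate unit for $\cA$) all of $\cA$. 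Note that the hypothesis that $\Gamma$ is non-supramenable is not actually needed for this proposition as stated — it will be used later, via Lemma~\ref{lm:paradox-proj}, to guarantee that the paradoxical projections that (iii) supplies are abundant (namely that $\Gamma$-full projections are paradoxical when $A$ is paradoxical).

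\textbf{Main obstacle.} The delicate point is Step~2: making the self-referential condition (iii) and the generation-by-projections condition (ii) compatible in a single countable construction. The resolution — interleaving Lemma~\ref{lm:projection} (to keep the algebra generated by projections) with Lemma~\ref{lm:prop-inf} (to catch paradoxicality witnesses) at every stage of an $\omega$-indexed tower, then passing to the closure of the union — is the technical heart, and the one place where one must verify that approximating a projection of the limit algebra by a projection of a finite stage does not destroy paradoxicality. Everything else is bookkeeping with the already-established lemmas.
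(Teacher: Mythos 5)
Your proposal is correct and follows essentially the same route as the paper: collect $M'$, $M''$ and $1_A$, then build an $\omega$-indexed tower interleaving Lemma~\ref{lm:projection} (to stay generated by countably many projections) with Lemma~\ref{lm:prop-inf} (to catch paradoxicality witnesses), and observe that every projection of the limit algebra already lies at a finite stage. The only cosmetic difference is that the paper settles your ``delicate point'' more cleanly than your parenthetical suggests: since $\cA$ is commutative, a projection of $\cA$ within distance $1$ of a projection of some finite stage is \emph{equal} to it, so no unitary conjugation or Murray--von Neumann comparison is needed.
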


\begin{proof} If $\Gamma$ is exact, then let $M = M' \cup M'' \cup \{1_A\}$, and let otherwise $M = M' \cup \{1_A\}$, where $M'$ and $M''$ are as in  Lemma~\ref{lm:free} and Lemma~\ref{lm:amenable}, respectively. Suppose that $\cA$ is a $\Gamma$-invariant (separable) sub-\Cs{} of $\cC_A$ which contains $M$. Then $\Gamma$ acts freely on $\widehat{\cA}$, the approximate unit  $\{p_n\}_{n=1}^\infty$ defined above Lemma~\ref{lm:free}  is contained in and is an approximate unit for $\cA$ by  Lemma~\ref{lm:free}; and if $\Gamma$ is exact, then $\Gamma$ acts amenably on $\widehat{\cA}$ by Lemma~\ref{lm:amenable}.

Let us show that $1_A$ is a $\Gamma$-full projection in any such \Cs{} $\cA$. Let $\cI$ be the smallest closed $\Gamma$-invariant ideal in $\cA$ which contains $1_A$. Then $1_{tA} = \alpha_t(1_A) \in \cI$ for all $t \in \Gamma$. Since $p_n = 1_{B_n}$ and $B_n = \bigcup_{t \in F_n} tA$ for some finite subset $F_n$ of $\Gamma$, we see that $p_n \le \sum_{t \in F_n} 1_{tA} \in \cI$, so $p_n \in \cI$ for all $n$. As $\{p_n\}_{n=1}^\infty$ is an approximate unit for $\cA$ we conclude that $\cI = \cA$, so $1_A$ is $\Gamma$-full in $\cA$.

Construct inductively countable $\Gamma$-invariant sets of projections
$$Q_0 \subseteq Q_1 \subseteq Q_2 \subseteq \cdots \subseteq \cC_A, \qquad P_0 \subseteq P_1 \subseteq P_2 \subseteq \cdots \subseteq \cC_A,$$
such that $M \subseteq C^*(Q_0)$, $P_n$ is the set of projections in $C^*(Q_n)$, and such that each projection in $P_n$, which is $(\ell^\infty(\Gamma), \Gamma)$-paradoxical, is $(C^*(Q_{n+1}), \Gamma)$-paradoxical.  The set of projections in any separable abelian \Cs{} is countable. Therefore $P_n$ is countable if $Q_n$ is countable. 

The existence of $Q_0$ follows from Lemma~\ref{lm:projection}. Suppose that $n \ge 0$ and $Q_n, P_n \subseteq \cC_A$ have been found. The existence of $Q_{n+1}$ then follows from  Lemma~\ref{lm:prop-inf} and Lemma~\ref{lm:projection}. 

Put
$$P_\infty = \bigcup_{n=1}^\infty P_n, \qquad \cA = C^*(P_\infty).$$
Then $\cA$ is a $\Gamma$-invariant separable sub-\Cs{} of $\cC_A$ which contains $M$. By definition, $\cA$ is generated by its projections, so (ii) holds.

Observe that each projection in $\cA$ actually belongs to $P_\infty$. Indeed,  if $p$ is a projection in $\cA$, then, for some $n$, there is a projection $p' \in C^*(P_n)$ such that $\|p-p'\| < 1$. Since $\cA$ is commutative this entails that $p = p'$. Each projection in $C^*(P_n)$ belongs to $P_{n+1}$, so $p \in P_{n+1}$.

Let $p$ be a projection in $\cA$ which is $(\ell^\infty(\Gamma),\Gamma)$-paradoxical. Then $p \in P_n$ for some $n$. It follows that $p$ is $(C^*(Q_{n+1}),\Gamma)$-paradoxical and hence also $(\cA,\Gamma)$-paradoxical. Thus (iii) holds.
\end{proof}

\noindent Recall the definition of the relation $A \ll \Gamma$ from Definition~\ref{def:small}.

\begin{lemma} \label{lm:proper-paradoxical}
Let $\Gamma$ be a countable group and let $A$ be a non-empty subset of $\Gamma$ such that $A \ll \Gamma$. Let $\cA \subseteq \cC_A$ be as in Proposition~\ref{prop:c}. Then $\cA$ is non-unital, and so is $\cA/\cI$ for every proper closed  $\Gamma$-invariant ideal $\cI$ in $\cA$.
\end{lemma}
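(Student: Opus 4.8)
First I would prove that the locally compact space $X_A$ contains no non-empty compact $\Gamma$-invariant subset. Suppose $W\subseteq X_A$ were such a set. Since $X_A=\bigcup_{t\in\Gamma}t.K_A$ and $W$ is compact, there is a finite $S\subseteq\Gamma$ with $W\subseteq\bigcup_{s\in S}s.K_A=K_B$, where $B:=\bigcup_{s\in S}sA$, so that $B\propto_\Gamma A$. Using $\Gamma$-invariance of $W$, Lemma~\ref{lem:KA}(v), and the fact (also a consequence of Lemma~\ref{lem:KA}) that $A'\mapsto K_{A'}$ commutes with finite intersections, we obtain for every finite $T\subseteq\Gamma$
\[
W=\bigcap_{t\in T}t.W\ \subseteq\ \bigcap_{t\in T}t.K_B\ =\ K_{\bigcap_{t\in T}tB},
\]
whence $\bigcap_{t\in T}tB\neq\emptyset$. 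Thus $B$ is absorbing (Definition~\ref{def:small}(ii)), and since $B\propto_\Gamma A$ this contradicts $A\ll^*\Gamma$, which holds by Lemma~\ref{lem:small}(ii) because $A\ll\Gamma$. (This is essentially the argument behind Proposition~\ref{prop:X_A}(iv).)

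Next I would set up the relevant quotient map. The algebra $\cA$, being a $C^*$-subalgebra of $\ell^\infty(\Gamma)$, is abelian, so $\cA=C_0(\widehat\cA)$, and we identify $\cC_A$ with $C_0(X_A)$. As noted in the proof of Proposition~\ref{prop:c}, the increasing sequence of projections $p_n=1_{B_n}$ lies in $\cA$ and is an approximate unit both for $\cA$ and for $\cC_A$; hence the inclusion $\cA\hookrightarrow\cC_A$ is non-degenerate, and is therefore dual to a continuous proper map $q\colon X_A\to\widehat\cA$, which is surjective (because the inclusion is injective) and $\Gamma$-equivariant (because the inclusion is $\Gamma$-equivariant).

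Finally I would conclude as follows. Let $\cI\subseteq\cA$ be a proper closed $\Gamma$-invariant ideal, allowing $\cI=0$; this is legitimate since $\cA\neq 0$ because $1_A\in\cA$ and $A\neq\emptyset$. Then $\cA/\cI\cong C_0(Z)$ for a non-empty closed $\Gamma$-invariant subset $Z\subseteq\widehat\cA$ (non-empty as $\cI$ is proper, $\Gamma$-invariant as $\cI$ is). If $\cA/\cI$ were unital, then $Z$ would be compact, and then $W:=q^{-1}(Z)$ would be a non-empty (as $q$ is onto), compact (as $q$ is proper), $\Gamma$-invariant (as $q$ is equivariant) subset of $X_A$, contradicting the first step. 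Hence $\cA/\cI$ is non-unital; specializing to $\cI=0$ gives that $\cA$ itself is non-unital.

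The technical heart is the first step: the absorbing-set computation showing that $A\ll^*\Gamma$ forbids any compact $\Gamma$-invariant subset of $X_A$. The only other point needing care is verifying that a shared approximate unit of projections makes $\cA\hookrightarrow\cC_A$ non-degenerate, so that the dual map $q$ is proper and therefore transports compactness of $Z$ up to $W=q^{-1}(Z)$; I expect this to be the main potential pitfall, but it is exactly what properness of $q$ provides, and it is the reason why it suffices to control compact invariant subsets of $X_A$ rather than of the (less tractable) quotient $\widehat\cA$ directly.
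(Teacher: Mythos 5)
Your argument is correct, but it takes a genuinely different route from the paper's. The paper argues algebraically and uses the full strength of $A \ll \Gamma$: if $p_n+\cI$ were a unit for $\cA/\cI$, then from $A \precsim_\Gamma \Gamma\setminus B_n$ one gets a partition $A_1,\dots,A_k$ of $A$ with the translates $t_jA_j$ disjoint from $B_n$, whence each $1_{t_jA_j}$ lies in $\cI$, hence $1_A=\sum_j\alpha_{t_j^{-1}}(1_{t_jA_j})\in\cI$, and the $\Gamma$-fullness of $1_A$ from Proposition~\ref{prop:c}(i) forces $\cI=\cA$. You instead argue topologically: you recycle the absorbing-set computation of Proposition~\ref{prop:X_A}(iv) to show $X_A$ has no non-empty compact $\Gamma$-invariant subset, and pull compactness of the hull $Z$ of $\cI$ back through the proper equivariant surjection $q\colon X_A\to\widehat\cA$ dual to the non-degenerate inclusion $\cA\hookrightarrow\cC_A$ (non-degeneracy being exactly what the shared approximate unit $\{p_n\}$ from Lemma~\ref{lm:free} provides). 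Each approach has something to recommend it: the paper's is shorter and stays entirely inside $\ell^\infty(\Gamma)$; yours only invokes the weaker property $A\ll^*\Gamma$ (via Lemma~\ref{lem:small}(ii)) and does not use the $\Gamma$-fullness of $1_A$ at all, so it actually establishes the formally stronger statement that the conclusion holds whenever $A\ll^*\Gamma$ --- a genuine gain, since the paper notes that $A\ll^*\Gamma$ does not imply $A\ll\Gamma$. Your two points of care --- that the shared projection approximate unit gives non-degeneracy, hence properness and surjectivity of $q$, and that $K_{A'}\cap K_{B'}=K_{A'\cap B'}$ so the finite intersections descend to $\Gamma$ --- are both handled correctly.
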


\begin{proof}  As in the beginning of this section (and as in the proof of Proposition~\ref{prop:c} above) let $\{p_n\}$ be the approximate unit for $\cC_A$ and for $\cA$, given by $p_n = 1_{B_n}$, where $B_n = \bigcup_{t \in F_n} tA$ for some increasing sequence, $\{F_n\}$, of finite subsets of $\Gamma$ whose union is $\Gamma$.

Let $\cI$ be a $\Gamma$-invariant closed two-sided ideal in $\cA$ and suppose that $\cA/\cI$ is unital. Then $p_n +\cI$ is a unit for $\cA/\cI$ for some $n$. Now, $A \precsim_\Gamma \Gamma \setminus B_n$ by the assumption that $A \ll \Gamma$. Hence there is a partition 
$A_1, \dots, A_k$ of $A$ and elements $t_1, \dots, t_k$ in $\Gamma$ such that $\{t_jA_j\}$ are pairwise disjoint subsets of $\Gamma \setminus B_n$.
For each $j$, 
$$0 = (1_{t_jA_j}+\cI)(1_{B_n}+\cI) = (1_{t_jA_j}+\cI)(p_n+\cI) =1_{t_jA_j} + \cI.$$ Hence $1_{t_jA_j}$ belongs to $\cI$ for all $j$,  so $1_A = \sum_{j=1}^n \alpha_{t_j^{-1}} (1_{t_jA_j})$ belongs to $\cI$. Finally, because $1_A$ is a $\Gamma$-full projection in $\cA$, we must have $\cI=\cA$.
\end{proof}

\noindent
By the \emph{locally compact non-compact Cantor set} $\Can^*$ we shall mean the unique (up to homeomorphism) locally compact, non-compact, totally disconnected second countable Hausdorff space that has no isolated points. This set arises, for example, from the usual (compact) Cantor set $\Can$ by removing one point. One can also realise it as the product $\N\times\Can$, as the local field $\Q_p$, etc.

\begin{proof}[Proof of Theorem~\ref{thm:d:intro}] 
The ``only if'' part follows from Proposition~\ref{prop:lch} (and Lemma~\ref{lm:3conditions}). The conclusions about the crossed product $C_0(\Can^*) \rtimes_\red \Gamma$ follow from Proposition~\ref{prop:pi} and  Lemma~\ref{lm:3conditions}, respectively, from Proposition~\ref{prop:pi-actions}.

Suppose that $\Gamma$ is non-supramenable and let $A$ be a non-empty paradoxical subset of $\Gamma$. If $\Gamma$ is non-amenable, then choose $A$ such that $A \ll \Gamma$, cf.\ Lemma~\ref{lm:smallA}. Let $X_A$ and $\cC_A$ be as defined in the beginning of this section, and let $\cA$ be as in Proposition~\ref{prop:c}.

The \Cs{} $\cA$ contains the projection $1_A$, and this projection is furthermore $\Gamma$-full in $\cA$ by Proposition~\ref{prop:c}. It follows that $\cA$ contains a maximal proper closed $\Gamma$-invariant ideal $\cI$  (cf.\ Remark~\ref{rm:compact-ideals}). Put $\cB = \cA/\cI$. Let 
$$X = \widehat{\cB} \qquad \text{and} \qquad Y = \widehat{\cA}.$$
Then $\cB \cong C_0(X)$, $\cA \cong C_0(Y)$, and $X$ is a closed $\Gamma$-invariant subset of $Y$. We know from Proposition~\ref{prop:c} that  $\Gamma$ acts freely on $Y$, and the action is moreover amenable if $\Gamma$ is exact. These properties pass to the subset $X$, so the action of $\Gamma$ on $X$ is free, amenable (if $\Gamma$ is exact), and minimal (by maximality of $\cI$). 

We know from Proposition~\ref{prop:c} that $\cA$ is generated by its projections. Hence $\cB$ is also generated by its projections. Hence $X$ and $Y$ are totally disconnected. As $\cA$ and $\cB$ are separable we conclude that $X$ and $Y$ are second countable. We show below that the action of $\Gamma$ on $X$ is purely infinite. This clearly will imply that $X$ has no isolated points (an isolated point is compact-open and of course never paradoxical). If $\Gamma$ is non-amenable, then $A \ll \Gamma$ which by Lemma~\ref{lm:proper-paradoxical} entails that $\cB$ is non-unital, whence $X$ is non-compact. We arrive at the same conclusion (that $X$ is non-compact) if $\Gamma$ is amenable, because no action of an amenable group on a compact Hausdorff space can be purely infinite. We can therefore conclude that $X$ is homeomorphic to $\Can^*$. 

We finally show that the action of $\Gamma$ on $X$ is purely infinite. Let $K$ be a non-empty compact-open subset of $X$. Then $K$ is $\Gamma$-full in $X$ because $X$ is a minimal $\Gamma$-space. Hence there exists a $\Gamma$-full compact-open subset $K'$ of $Y$ such that $K= K' \cap X$, cf.\ Lemma~\ref{lm:extending}(i). Let $p \in \cA$ be the projection that corresponds to the projection $1_{K'} \in C_0(Y)$. Then $p$ is $\Gamma$-full in $\cA$ and therefore also $\Gamma$-full in $\cC_A$. Hence $p$ is $(\ell^\infty(\Gamma),\Gamma)$-paradoxical because $A$ is paradoxical, cf.\ Lemma~\ref{lm:paradox-proj}. By the construction of $\cA$, cf.\ Proposition~\ref{prop:c}, it follows that $p$ is $(\cA,\Gamma)$-paradoxical. Hence $1_{K'}$ is $(C_0(Y),\Gamma)$-paradoxical, so $K'$ is $(Y,\Gamma,\K)$-paradoxical, cf.\ claim (c) (below Definition~\ref{def:pi-action}). By Lemma~\ref{lm:extending}(ii) this implies that $K$ is $(X,\Gamma,\K)$-paradoxical.
\end{proof}

\section{Amenable actions of non-exact groups on locally compact Hausdorff spaces} 

\noindent It is well-known that only exact (discrete) group can act amenably on a compact Hausdorff space, cf.~\cite[Theorem 5.1.7]{BroOza:book}. We also know that any group (exact or not) admits an amenable action on some locally compact Hausdorff space, for example on the group itself. We shall here address the issue of when a non-exact group admits amenable actions on the locally compact non-compact Cantor set $\Can^*$, and whether the second part of Theorem~\ref{thm:d:intro} can be extended to all countable non-supramenable groups. The goal is to prove Theorem~\ref{thm:non-exact:intro}.

\medskip
We first recall a notion of induced action. Let $\Gamma_0$ be a subgroup of a countable group $\Gamma$, and suppose that $\Gamma_0$ acts on a locally compact Hausdorff space $X$. Then $\Gamma$ acts by left-multiplication on the first coordinate of $\Gamma \times X$, and $\Gamma_0$ acts on $\Gamma \times X$ by $s.(t,x) = (ts^{-1},s.x)$, for $s \in \Gamma_0$, $(t,x) \in \Gamma \times X$. These two actions commute and thus $\Gamma$ acts on $Y = (\Gamma \times X)/\Gamma_0$. This action is called the \emph{induced} action. We record some facts about this construction:

\begin{lemma}\label{lem:induced}
Let $\Gamma_0 \curvearrowright X$ and $\Gamma \curvearrowright Y$ be as above. Then:
\begin{enumerate}
\item $Y$ is homeomorphic to $X \times (\Gamma/\Gamma_0)$.
\item If $X\cong \Can$, then $Y\cong \Can$ or $Y\cong \Can^*$ according to whether $|\Gamma : \Gamma_0|$ is finite or infinite.
\item If $X\cong \Can^*$, then $Y\cong\Can^*$.
\item If $\Gamma_0 \curvearrowright X$ is free, then so is $\Gamma \curvearrowright Y$.
\item If $\Gamma_0 \curvearrowright X$ is minimal, then so is $\Gamma \curvearrowright Y$.
\item If $\Gamma_0 \curvearrowright X$ is amenable, then so is $\Gamma \curvearrowright Y$.
\item If $X$ admits a non-zero $\Gamma_0$-invariant Radon measure, then $Y$ admits a non-zero $\Gamma$-invariant Radon measure.
\item If $\Gamma_0 \curvearrowright X$ is purely infinite, then so is $\Gamma \curvearrowright Y$.
\end{enumerate}
\end{lemma}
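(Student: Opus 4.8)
The plan is to prove the eight parts of Lemma~\ref{lem:induced} by setting up one good description of the induced space $Y = (\Gamma\times X)/\Gamma_0$ and then reading off each property from that description.

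\medskip\noindent\textbf{Step 1: a concrete model for $Y$.}
First I would fix a set-theoretic cross-section $R\subseteq\Gamma$ for the right cosets $\Gamma_0\backslash\Gamma$, so that every $t\in\Gamma$ factors uniquely as $t = \gamma r$ with $\gamma\in\Gamma_0$ and $r\in R$. In the quotient $(\Gamma\times X)/\Gamma_0$ under $s.(t,x)=(ts^{-1},s.x)$, every orbit has a unique representative of the form $(r,x)$ with $r\in R$: indeed $(t,x)=(\gamma r, x)\sim(\gamma r\gamma^{-1},\gamma.x)$ is not quite of that form, so one is more careful — the stabiliser action moves $t$ by right multiplication by $s^{-1}$, so $(t,x)\sim(ts^{-1},s.x)$ and choosing $s=$ the $\Gamma_0$-part of $t$ normalises the first coordinate into $R$. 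This gives a bijection $Y\cong R\times X\cong(\Gamma/\Gamma_0)\times X$ as sets; with the quotient topology (the map $\Gamma\times X\to Y$ is open, $\Gamma\times X$ is locally compact Hausdorff, the $\Gamma_0$-action is free and proper since $\Gamma_0$ is discrete and acts freely on the discrete factor $\Gamma$) one checks $Y$ is locally compact Hausdorff and the bijection is a homeomorphism, where $\Gamma/\Gamma_0$ carries the discrete topology. This is part (i). Part (ii) is then immediate: $\Can\times(\text{finite discrete})\cong\Can$ and $\Can\times(\text{countably infinite discrete})\cong\N\times\Can\cong\Can^*$; part (iii) is $\Can^*\times(\text{any nonempty countable discrete})\cong\Can^*$ since $\Can^*\cong\N\times\Can$. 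Under the homeomorphism of (i) the $\Gamma$-action on $Y$ becomes: $g$ sends the class of $(r,x)$ to the class of $(gr,x)$; writing $gr=\gamma(g,r)\,\rho(g,r)$ with $\gamma(g,r)\in\Gamma_0$, $\rho(g,r)\in R$, this is $g.(r,x)=(\rho(g,r),\,\gamma(g,r).x)$ — the standard ``cocycle'' formula for an induced action.

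\medskip\noindent\textbf{Step 2: the dynamical properties (iv)--(viii).}
From the cocycle formula each of the remaining items is routine. For freeness (iv): if $g.(r,x)=(r,x)$ then $\rho(g,r)=r$, so $gr=\gamma r$ with $\gamma\in\Gamma_0$, i.e.\ $g=\gamma\in\Gamma_0$, and then $\gamma.x=x$ forces $\gamma=e$ by freeness of $\Gamma_0\curvearrowright X$. For minimality (v): the orbit of $(r,x)$ already meets every coset $\{r'\}\times X$ (act by $g$ with $gr\in\Gamma_0 r'$), and within a fixed coset the residual action is by $\Gamma_0$ and is (a copy of) the minimal action $\Gamma_0\curvearrowright X$; a closed invariant nonempty set therefore meets some $\{r'\}\times X$ in a nonempty $\Gamma_0$-invariant closed set, hence equals $\{r'\}\times X$ there, and by the first observation equals all of $Y$. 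For (vii), given a nonzero $\Gamma_0$-invariant Radon measure $\nu$ on $X$, the measure $\mu=\sum_{r\in R}\delta_r\otimes\nu$ on $Y\cong(\Gamma/\Gamma_0)\times X$ is nonzero, Radon (locally finite since locally $Y$ looks like finitely many copies of $X$), and $\Gamma$-invariant by the cocycle formula together with $\Gamma_0$-invariance of $\nu$. For pure infiniteness (viii): a compact-open $K\subseteq Y$ meets only finitely many cosets, $K=\bigsqcup_{r\in R_0}\{r\}\times K_r$ with $K_r\subseteq X$ compact-open; since $\Gamma_0\curvearrowright X$ is purely infinite each $K_r$ is $(X,\Gamma_0,\K)$-paradoxical, and transporting those witnessing data through the embeddings $\{r\}\times X\hookrightarrow Y$ (which intertwine $\Gamma_0\curvearrowright X$ with the residual action, and using elements of $\Gamma$ to shuffle between cosets) assembles a paradoxical decomposition of $K$ in $(Y,\Gamma,\K)$.

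\medskip\noindent\textbf{Step 3: amenability (vi).}
For (vi) I would invoke the permanence properties of amenable actions. The cleanest route: amenability of $\Gamma_0\curvearrowright X$ means the transformation groupoid $X\rtimes\Gamma_0$ is amenable; the induced groupoid $Y\rtimes\Gamma$ is (Morita) equivalent to $X\rtimes\Gamma_0$ — this is exactly the groupoid-level statement of induction — and amenability is invariant under groupoid equivalence (see e.g.\ Anantharaman-Delaroche). Alternatively, and more self-containedly, one transports a net of approximately invariant continuous means $m_i\colon X\to\mathrm{Prob}(\Gamma_0)$ to $\widehat m_i\colon Y\to\mathrm{Prob}(\Gamma)$ by pushing forward along the coset inclusion $\Gamma_0\hookrightarrow\Gamma$ on $\{r\}\times X$ and translating by $r$; the cocycle identity makes the approximate invariance under $\Gamma_0$ upgrade to approximate invariance under all of $\Gamma$.

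\medskip\noindent\textbf{Main obstacle.}
Almost everything here is bookkeeping once the homeomorphism $Y\cong(\Gamma/\Gamma_0)\times X$ and the cocycle formula are in hand, so the genuine content — and the step I expect to require the most care — is establishing that homeomorphism in Step~1: verifying that the quotient topology on $(\Gamma\times X)/\Gamma_0$ really is the product topology on $(\Gamma/\Gamma_0)\times X$ with $\Gamma/\Gamma_0$ discrete (properness/freeness of the $\Gamma_0$-action, openness of the quotient map), and pinning down in (ii)--(iii) the topological classification of products of (locally compact non-compact) Cantor sets with countable discrete spaces via the characterisation of $\Can$ and $\Can^*$ by second countability, total disconnectedness, compactness-or-not, and absence of isolated points.
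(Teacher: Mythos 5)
Your proposal is correct and follows essentially the same route as the paper: a clopen coset transversal gives the homeomorphism $Y\cong(\Gamma/\Gamma_0)\times X$, from which (ii)--(v) and (vii)--(viii) are read off by the same bookkeeping (including the reduction of (viii) to sets of the form $\pi(\{t\}\times E)$ and conjugated group elements), and (vi) is proved by transporting approximate invariant continuous means exactly as in the paper's formula $m_t(E)=m(t^{-1}E\cap\Gamma_0)$. The only slip is that your transversal must represent \emph{left} cosets $\Gamma/\Gamma_0$ (write $t=r\gamma$ with $r\in R$, $\gamma\in\Gamma_0$, not $t=\gamma r$), since the stabilising $\Gamma_0$-action $(t,x)\mapsto(ts^{-1},s.x)$ moves the first coordinate within its left coset $t\Gamma_0$; with that convention fixed, your normalisation and cocycle formula go through verbatim.
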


\begin{proof}
Let $\pi \colon \Gamma \times X \to Y$ denote the quotient mapping.  

(i). Write $\Gamma = \bigcup_{\alpha \in I} t_\alpha \, \Gamma_0$ as a disjoint union of left cosets and put $A = \{t_\alpha \mid \alpha \in I\}$. Then $A \times X$ is a clopen subset of $\Gamma \times X$ which is a transversal for the action of $\Gamma_0$ on $\Gamma \times X$. The restriction of $\pi$ to $A \times X$ therefore defines a homeomorphism $A \times X \to Y$.

(ii) and (iii) follow from (i). 

(iv). Suppose that $t.\pi(s,x) = \pi(s,x)$ for some $t \in \Gamma$ and some $(s,x) \in \Gamma \times X$. Then there is $r \in \Gamma_0$ such that $(ts,x) = (sr^{-1},r.x)$. As $\Gamma_0$ acts freely on $X$ this implies that $r=e$, so $t=e$.

(v). Each $\Gamma \times \Gamma_0$ orbit on $\Gamma \times X$ is dense, if $\Gamma_0$ acts minimally on $X$. Hence each $\Gamma$ orbit on $Y$ is dense. 

(vi).  Let $m_i \colon X \to \mathrm{Prob}(\Gamma_0)$, $i \in I$, be approximate invariant continuous means that witness the amenability of the action of $\Gamma_0$ on $X$, cf.~\cite{Ana:TAMS2002}. For each $m \in \mathrm{Prob}(\Gamma_0)$ and for each $t \in \Gamma$ define $m_t \in  \mathrm{Prob}(\Gamma)$ by $m_t(E) = m(t^{-1}E \cap \Gamma_0)$, for $E \subseteq \Gamma$. Identify $Y$ with $A \times X$ as in (i), and define 
$$\widetilde{m}_i \colon A \times X \to  \mathrm{Prob}(\Gamma) \quad \text{by} \quad \widetilde{m}_i^{(t_\alpha,x)} = (m_i^x)_{t_\alpha}, \quad x \in X, \; \alpha \in I.$$ 
Let $s \in \Gamma$ and $(t_\alpha,x) \in A \times X$ be given. Then $st_\alpha = t_\beta \, r$ for some (unique) $\beta \in I$ and $r \in \Gamma_0$, and $s.(t_\alpha,x) = (t_\beta,r.x)$. Let $E \subseteq \Gamma$. Then
\begin{eqnarray*}
\widetilde{m}_i^{s.(t_\alpha,x)}(E) & =& \widetilde{m}_i^{(t_\beta,r.x)}(E) \; = \; m_i^{r.x}\big(t_\beta^{-1}E \cap \Gamma_0\big) \\ & = & m_i^{r.x}\big(rt_\alpha^{-1}s^{-1}E \cap \Gamma_0\big). 
\end{eqnarray*}
The latter expression is close to $m_i^{x}\big(t_\alpha^{-1}s^{-1}E \cap \Gamma_0\big)$ when $i$ is large by approximate invariance of $(m_i)$, and
$$s.\widetilde{m}_i^{(t_\alpha,x)}(E) \; = \; \widetilde{m}_i^{(t_\alpha,x)}(s^{-1}E) \;=\; m_i^x\big(t_\alpha^{-1}s^{-1}E \cap \Gamma_0\big).$$
This shows that $(\widetilde{m}_i)_i$ is an approximate invariant continuous mean.

 (vii). Identify again $Y$ with $A \times X$ as in (i). Let $\lambda$ be a non-zero $\Gamma_0$-invariant Radon measure on $X$ and let $\mu$ be counting measure on $A$. Then $\mu \otimes \lambda$ is a $\Gamma$-invariant non-zero Radon measure on $A \times X$.

(viii). If $F$ and $F'$ are disjoint compact-open subsets of $Y$ both of which are $(Y,\Gamma)$-paradoxical, then $F \cup F'$ is also $(Y,\Gamma)$-paradoxical. We need therefore only show that $F=\pi(\{t\} \times E)$ is $(Y,\Gamma)$-paradoxical for each $t \in \Gamma$ and for each  compact-open subset $E$ of $X$. As $E$ is $(X,\Gamma_0)$-paradoxical there are pairwise disjoint compact-open subsets $E_1, E_2, \dots, E_{n+m}$ of $X$ and elements $t_1,t_2, \dots, t_{n+m} \in \Gamma_0$ such that \eqref{eq:E} holds. Put $F_j = \pi(\{t\} \times E_j)$ and let $s_j \in \Gamma$ be such that $s_jt = tt_j$. Then $F_1,F_2, \dots, F_{n+m}$ are pairwise disjoint compact-open subsets of $F$. As 
$$s_j .\pi(t,x) = \pi(s_jt,x) = \pi(tt_j, x) = \pi(t,t_j.x),$$
we see that $s_j.F_j = \pi(\{t\} \times t_j.E_j)$. This shows that
$$F = \bigcup_{j=1}^n s_j.F_j = \bigcup_{j=n+1}^{n+m} s_j.F_j,$$
and hence that $F$ is $(Y,\Gamma)$-paradoxical.
\end{proof}

\begin{proof}[Proof of Theorem~\ref{thm:non-exact:intro}]
In each case we have a subgroup $\Gamma_0$ of $\Gamma$ which admits a free minimal amenable action on $X$, where $X = \Can$ or $X = \Can^*$. This action induces a free minimal amenable action of $\Gamma$ on $Y$, where $Y = \Can$ or $Y = \Can^*$ (according to the conclusions of~(ii) and~(iii) in Lemma~\ref{lem:induced}).

(i). Here $\Gamma_0$ acts on $X=\Can$ in the prescribed way by~\cite{HjorthMolberg}  or by~\cite{RorSie:ETDS}. The former reference gives a free minimal action on $\Can$ for every infinite $\Gamma_0$, and this action will be amenable if $\Gamma_0$ is amenable. The latter reference gives a free minimal amenable action on $\Can$, whenever $\Gamma_0$ is exact and non-amenable. We conclude by inducing the action and recall that a non-exact group cannot act amenably on a compact set. If $|\Gamma:\Gamma_0| = \infty$, then $Y = \Can^*$, see Lemma~\ref{lem:induced}.

(ii). It is known that $\Gamma_0 = \Z$ has many interesting minimal actions on $X=\Can^*$, see~\cite{Danilenko01}. These actions are necessarily free since every non-trivial subgroup of $\Z$ has finite index. Pick such an action; it is of course amenable since $\Z$ is amenable. Therefore it induces the desired action of $\Gamma$ on $Y=\Can^*$. If $\Gamma_0$ is infinite and amenable, then $\Gamma_0$ has a free minimal action on $X=\Can$ by~\cite{HjorthMolberg}, and this action is necessarily amenable. Again, if $|\Gamma:\Gamma_0| = \infty$, then $Y = \Can^*$.

(iii). Here $\Gamma_0$ is exact and non-supramenable. By Theorem~\ref{thm:d:intro} there is a free minimal amenable purely infinite action of $\Gamma_0$ on $X=\Can^*$. This action induces a free minimal amenable purely infinite action of $\Gamma$ on $Y = \Can^*$.
\end{proof}

\begin{questions}
Let $\Gamma$ be a countably infinite group.
\begin{enumerate}\setlength{\itemsep}{7pt}%
\item Does $\Gamma$ admit a free minimal amenable action on $\Can^*$? 

In that case, $C_0(\Can^*) \rtimes_\red \Gamma$ is a (non-unital) simple nuclear separable \Cs{} in the UCT class. 
\item Does $\Gamma$ admit a free minimal amenable action on $\Can^*$ which leaves invariant a non-zero Radon measure on $\Can^*$? 

In that case, $C_0(\Can^*) \rtimes_\red \Gamma$ is a (non-unital) simple nuclear separable \Cs{} in the UCT class with a densely defined trace (hence it is stably finite). 
\item Does $\Gamma$ admit a free minimal amenable purely infinite action on $\Can^*$ if $\Gamma$ is non-supramenable?

In that case, $C_0(\Can^*) \rtimes_\red \Gamma$ is a stable Kirchberg algebra in the UCT class.
\end{enumerate}
\end{questions}

\noindent It follows from Theorem~\ref{thm:non-exact:intro} that Question~(i) has an affirmative answer if $\Gamma$ contains an element of infinite order, or if $\Gamma$ contains an infinite exact subgroup of infinite index, or if $\Gamma$ contains an infinite exact subgroup and $\Gamma$ itself is non-exact. Question~(ii) has an affirmative answer if $\Gamma$ contains an element of infinite order, or if $\Gamma$ contains an infinite amenable subgroup of infinite index. Question~(iii) has an affirmative answer if $\Gamma$ contains an exact non-supramenable subgroup. 

On the other hand, as for Question (i), we do not even know if every countable \emph{amenable} group $\Gamma$ admits a free minimal action on $\Can^*$. If there is an (infinite) set $A \subseteq \Gamma$ such that $A \ll^* \Gamma$
and such that some minimal $\Gamma$-subspace of $X_A$ is non-discrete, then the construction in Proposition~\ref{prop:X_A} and Section~\ref{sec:Kirchberg} will give such an action. 
Example~\ref{ex:discrete} shows that the existence of a non-discrete minimal $\Gamma$-subspace of $X_A$ is not guaranteed, even if $A$ is infinite.\footnote{Added in proof: It will be shown in a subsequent paper by the third named author and H.\ Matui that each infinite countable group $\Gamma$ indeed does contain a subset $A$ with these properties.}

\bibliographystyle{amsplain}
\providecommand{\bysame}{\leavevmode\hbox to3em{\hrulefill}\thinspace}
\providecommand{\MR}{\relax\ifhmode\unskip\space\fi MR }
\providecommand{\MRhref}[2]{%
  \href{http://www.ams.org/mathscinet-getitem?mr=#1}{#2}
}
\providecommand{\href}[2]{#2}

\Addresses
\end{document}